\newtheorem{theorem}{Theorem}
\newtheorem{proposition}[theorem]{Proposition}
\newtheorem{lemma}[theorem]{Lemma}
\newtheorem{claim}[theorem]{Claim}
\theoremstyle{definition}
\newtheorem{definition}[theorem]{Definition}
\newtheorem{remark}[theorem]{Remark}
\newtheorem{question}[theorem]{Question}
\DeclareMathOperator{\aut}{Aut}
\DeclareMathOperator{\SO}{SO}
\DeclareMathOperator{\SU}{SU}
\DeclareMathOperator{\id}{id}
\newcommand{\Fraczyk}{Fr\k{a}czyk}
\newcommand{\ts}{\textsuperscript}
\title{Vanishing uniqueness thresholds in Voronoi percolation on products}
\author{
Matteo \textsc{D'ACHILLE}\thanks{Institut Élie Cartan de Lorraine, CNRS, Universit\'e de Lorraine, F-57070, Metz, France\newline $_{}$\hfill  \href{mailto:matteo.d-achille@univ-lorraine.fr}{\texttt{matteo.d-achille@univ-lorraine.fr}}}, Jan \textsc{GREBIK}\thanks{Universität Leipzig, Mathematisches Institut, D-04009, Leipzig, Germany\newline $_{}$\hfill  \href{mailto:grebikj@gmail.com}{\texttt{grebikj@gmail.com}}},
Ali \textsc{KHEZELI}\thanks{School of Mathematics, Institute for Research in Fundamental Sciences, Tehran, Iran\newline $_{}$\hfill  \href{mailto:alikhezeli@ipm.ir}{\texttt{alikhezeli@ipm.ir}}},\\
Konstantin \textsc{RECKE}\thanks{Mathematical Institute, University of Oxford, UK \newline $_{}$\hfill  \href{mailto:konstantin.recke@maths.ox.ac.uk}{\texttt{konstantin.recke@maths.ox.ac.uk}}}, and Amanda \textsc{WILKENS}\thanks{Department of Mathematical Sciences, Carnegie Mellon University, Pittsburgh, USA \newline $_{}$\hfill  \href{mailto:awilkens.math@gmail.com}{\texttt{awilkens.math@gmail.com}}} }
\date{}
\begin{document}

\maketitle

\begin{abstract}
    We study Poisson--Voronoi percolation and its discrete analogue Bernoulli--Voronoi percolation in spaces with a non-amenable product structure. We develop a new method of proving smallness of the uniqueness threshold $p_u(\lambda)$ at small intensities $\lambda>0$ based on the unbounded borders phenomenon of their underlining ideal Poisson--Voronoi tessellation. We apply our method to several concrete examples in both the discrete and the continuum setting, including $k$-fold graph products of $d$-regular trees for $k\ge2,d\ge3$ and products of hyperbolic spaces  $\mathbb H_{d_1}\times \ldots \times \mathbb H_{d_k}$ for $k\ge2, d_i\ge2$, complementing a recent result of the 2\ts{nd} and 4\ts{th} author for symmetric spaces of connected higher rank semisimple real Lie groups with property~(T). We also provide new examples of non-amenable Cayley graphs with the FIID sparse unique infinite cluster property, answering positively a recent question of Pete and Rokob.
\end{abstract}

\section{Introduction}\label{sec:intro}

{\bf Poisson-Voronoi percolation} is a continuum percolation model that can be defined on any metric space $(M,d)$ with an infinite Radon measure $\mu$ in two steps.
First, for $\lambda>0$, consider a Poisson point process of intensity $\lambda\mu$ and associate to each point of the process its Voronoi cell, i.e.~the set of all points in $M$ closer to this point than to any other point of the process. For $p\in(0,1]$, color each cell black with probability $p$ and white with probability $1-p$ independently of the colors of all other cells and let $\omega_p^{(\lambda)}$ denote the union of black cells. The discrete analogue is {\bf Bernoulli--Voronoi percolation} on a locally finite, connected graph. Here, for $\lambda\in(0,1]$, we first perform $\lambda$-Bernoulli site percolation, i.e.~delete vertices independently with probability $1-\lambda$, and associate to each random point its Voronoi cell. Then we independently color each cell black with probability $p\in(0,1]$ and consider the union of black cells, also denoted by~$\omega_p^{(\lambda)}$.

The Poisson--Voronoi percolation model was first studied in Euclidean space $\mathbb R^d$, see~e.g.~\cite{Z96,BR06,BR06b}. Then Benjamini and Schramm showed in their seminal paper~\cite{BS00} that fascinating new behavior arises in the negatively curved setting of the hyperbolic plane $\mathbb H_2$; we refer the interested reader to~\cite{HM24} for recent results in this setting and a state-of-the-art overview. In this spirit, the study of Poisson--Voronoi percolation on more general manifolds, which goes back already to \cite{BS98}, as well as Bernoulli--Voronoi percolation on graphs beyond $\mathbb Z^d$ has recently attracted attention \cite{BDRS25,GR25}. 
The results in \cite{GR25}, by the second and fourth authors, partly motivated this paper, so we briefly describe them below before stating the main results of this paper.

For fixed $\lambda$, a fundamental parameter associated to Poisson/Bernoulli--Voronoi percolation is the {\bf uniqueness threshold}
$$
p_u(\lambda)=\inf \big\{ p \, : \, \omega^{(\lambda)}_{p} \, \, \text{has a unique unbounded cluster w.p.p.} \big\},
$$
where {\bf cluster} refers to a path-connected component of $\omega_p^{(\lambda)}$ and ``w.p.p.'' stands for ``with positive probability''. The main result of~\cite{GR25} gives examples of Riemannian symmetric spaces of connected semisimple real Lie groups such that 
$$
\lim_{\lambda \to 0} p_u(\lambda)=0.
$$
This behavior is in striking contrast to the results of Benjamini and Schramm in the hyperbolic plane, where $\lim_{\lambda\to0} p_u(\lambda)=1$. In our opinion, it might be of interest for two more reasons:
\begin{itemize}
\item It has applications to ergodic theory, where it can be used to produce examples of non-amenable groups with {\em FIID sparse unique infinite clusters}. We describe these applications in more detail in Section~\ref{sec:FSUICP}.
\item To the best of our knowledge, there is no general methodology to prove the smallness of $p_u$. 
\end{itemize} 
To elaborate on the second point, smallness of $p_u$ was proved in~\cite{GR25} in the case that the associated Lie group has {\em higher rank} and {\em property~(T)}. The higher rank--assumption entered via a recent breakthrough of \Fraczyk, Mellick, and the fifth author~\cite{FMW23}.
Property~(T) entered via a probabilistic consequence first developed in the discrete setting in~\cite{MR23}. However, there are some very interesting examples of higher-rank symmetric spaces such that the associated Lie group does {\em not} have property~(T), for instance the Riemannian product $\mathbb H_2\times\mathbb H_2$ of two hyperbolic planes. See~\cite{H01} for the definition of Riemannian product and note that as a metric space it corresponds to the $L^2$-product. Note that there are also discrete examples with ``higher rank''--like properties but no underlying property~(T) such as the direct product of two $3$-regular trees.

\smallskip
In this paper, we develop a new method for proving smallness of $p_u$ which applies in many situations where property~(T) cannot be used. Before describing this method, we state our main results.

First of all, we give the first example of a Cayley graph with vanishing uniqueness thresholds for Bernoulli--Voronoi percolation.

\begin{theorem} \label{thm:mainTrees}
    Let $k\ge 2$, $d\ge 3$ and let $G\coloneqq\mathbb T_d\times\ldots\times \mathbb T_d$ be the direct product of $k$ copies of the $d$-regular tree $\mathbb{T}_d$.
    Then $p_u(\lambda)\to 0$ as $\lambda\to 0$.
\end{theorem}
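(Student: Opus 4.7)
The plan is to follow the general framework suggested in the introduction: reduce the question to a property of the \emph{ideal Poisson--Voronoi tessellation} (IPVT) of $G$ and then use this property to control $p_u(\lambda)$ as $\lambda\to 0$. The first step is to set up a suitable local convergence of the Bernoulli--Voronoi tessellation at intensity $\lambda$ to the IPVT of $G$ (with appropriate rescaling), building on the framework of \cite{FMW23}. The IPVT of a non-amenable product such as $\mathbb{T}_d^k$ exists, and its cells inherit a rich factorial structure from the IPVTs of the tree factors. The key structural input I would then need to establish is the \emph{unbounded borders phenomenon}: for any two adjacent cells $C_1,C_2$ of the IPVT, the set of edges of $G$ with one endpoint in $C_1$ and one in $C_2$ is almost surely infinite. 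In the setting of this theorem, this property is meant to replace the role played by property~(T) in~\cite{GR25}.

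To prove the unbounded borders phenomenon on $G=\mathbb{T}_d^k$, I would exploit the interplay between the product structure and the exponential growth of the tree factors. Given two cells associated with IPVT points $p,q$, their separating bisector decomposes naturally along the $k$ coordinate directions: if $p,q$ differ in at least one factor direction, the bisector meets an infinite family of ``slices'' isomorphic to $\mathbb{T}_d^{k-1}$, each of which should contain many boundary edges because $\mathbb{T}_d$ is non-amenable ($d\ge 3$). Combined with a mass-transport or zero-one law argument ruling out finite boundaries with positive probability, this should yield the claimed unbounded borders. I expect this to be the main obstacle, since the argument must remain stable under the IPVT limit, where the defining Poisson points effectively sit at infinity, and one has to control the geometry of bisectors of such ``ideal'' points in the product metric.

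Finally, given the unbounded borders phenomenon, I would deduce smallness of $p_u(\lambda)$ as follows. Fix $\epsilon>0$ and couple the Bernoulli--Voronoi tessellation at small intensity $\lambda$ to the IPVT. Local convergence and unbounded borders together imply that, with high probability, the border between two adjacent Voronoi cells contains at least $N(\lambda)$ edges, where $N(\lambda)\to\infty$ as $\lambda\to 0$. Colour cells black independently with probability $p=\epsilon$: the probability that none of the $N(\lambda)$ edges in a given border joins two black cells is at most $(1-\epsilon^2)^{N(\lambda)}\to 0$, so typical adjacent pairs of cells are joined by many black--black edges. A finite-energy or insertion-tolerance argument should then force any two candidate infinite clusters to merge, yielding uniqueness with positive probability. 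Hence $p_u(\lambda)\le \epsilon$ for all sufficiently small $\lambda$, and letting $\epsilon\to 0$ gives $p_u(\lambda)\to 0$.
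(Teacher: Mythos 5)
There is a genuine gap in the final step of your argument, and it reflects a misconception about how cell-level coloring interacts with edge counts.

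You propose to colour cells black with probability $p=\epsilon$ and argue that ``the probability that none of the $N(\lambda)$ edges in a given border joins two black cells is at most $(1-\epsilon^2)^{N(\lambda)}$.'' This calculation is wrong for Bernoulli--Voronoi percolation: the colouring is at the \emph{cell} level, so if $C_1$ and $C_2$ share $N(\lambda)$ boundary edges, those edges are not independent trials --- either both cells are black (probability $\epsilon^2$, and then every boundary edge connects) or at least one cell is white (and then no boundary edge connects). The number of shared edges $N(\lambda)$ is irrelevant to the probability that two adjacent cells become connected. Moreover, you then appeal to a ``finite-energy or insertion-tolerance argument'' to merge infinite clusters, but the paper explicitly notes that Voronoi percolation is neither insertion tolerant nor deletion tolerant (Section~2.2); this is precisely why the characterization of $p_u$ via long-range order needs the careful Delaunay-graph workaround developed in Appendix~\ref{app:Proofs}, rather than the standard Burton--Keane-style route.

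More fundamentally, your argument jumps from local information (many shared edges near the origin at small~$\lambda$) directly to global uniqueness. The paper emphasizes that this step cannot be taken for free: ``the reader may find it instructive to consider the non-amenable Cayley graph $\mathbb{T}_3$ in which no non-trivial version of local uniqueness is sufficient for global uniqueness.'' The central new ingredient of this paper, replacing property~(T) from \cite{GR25}, is Theorem~\ref{thm:LocalToGlobal}, which promotes local uniqueness to long-range order using the product structure: non-amenability of one $\mathbb{T}_d$-factor (via the quantitative threshold of Theorem~\ref{theorem:threshold}) runs Peres's ``shadowing'' argument on one coordinate, while infiniteness of the other factor provides the second step. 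Your proposal leaves this local-to-global bridge entirely unaddressed. Finally, you state the unbounded borders phenomenon only for \emph{adjacent} cells, whereas the actual property (Theorem~\ref{thm:UnboundedTouching}) is that \emph{every} pair of cells shares an unbounded border almost surely, i.e.\ the Delaunay graph is complete; it is this much stronger fact that, via the convergence results of Section~\ref{sec:ConvergenceDiagrams}, delivers the local uniqueness input (Theorem~\ref{thm:localuniqueness}) needed to apply Theorem~\ref{thm:LocalToGlobal}.
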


We also address the question about vanishing uniqueness thresholds for the following concrete examples of higher rank symmetric spaces (see Theorem~\ref{thm:mainSymm} for a general result). 

\begin{theorem} \label{thm:mainHyp} Let $k\ge2$, $d_1,\ldots,d_k\ge2$ and let $M\coloneqq \mathbb H_{d_1}\times \ldots \times \mathbb H_{d_k}$ be the Riemannian product of hyperbolic spaces $\mathbb H_{d_i}$ of dimension $d_i$, for $i=1,\ldots, k$. Then $p_u(\lambda)\to0$ as $\lambda\to0$.
\end{theorem}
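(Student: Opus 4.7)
The plan is to deduce Theorem~\ref{thm:mainHyp} from the general result for symmetric spaces (Theorem~\ref{thm:mainSymm}) by implementing the new method based on the unbounded borders phenomenon of the ideal Poisson--Voronoi tessellation (IPVT). The space $M = \mathbb H_{d_1}\times \ldots \times \mathbb H_{d_k}$ is the Riemannian symmetric space of the connected semisimple real Lie group $G = \prod_{i=1}^k \mathrm{SO}(d_i,1)^\circ$, whose real rank equals $k\ge 2$. Thus $G$ is of higher rank, but in general lacks property~(T), which is precisely what forces us outside the scope of~\cite{GR25}. By the breakthrough of \Fraczyk, Mellick, and the fifth author~\cite{FMW23}, the IPVT on $M$ arises as the $\lambda\to 0$ scaling limit of the Poisson--Voronoi tessellation and is a non-trivial, invariant tessellation in which every cell is unbounded and has infinitely many neighbors.

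The key input required by the new method is that every pair of neighboring IPVT cells shares an \emph{unbounded} common border. I would establish this for $M$ by exploiting the product structure. Given two adjacent IPVT cells $C_1,C_2$ whose ``nuclei'' correspond to ideal points $\xi_1,\xi_2$ in an appropriate boundary at infinity of $M$, adjacency means that the bisector $\{x : b_{\xi_1}(x)=b_{\xi_2}(x)\}$, defined via Busemann functions, meets both cells. Since the Busemann function on $M$ decomposes as a sum of Busemann functions on the factors, if $\xi_1,\xi_2$ project non-trivially to distinct factors, one can translate along the flat of an orthogonal factor $\mathbb H_{d_j}$ without disturbing the bisector equation, thereby sweeping out an unbounded region of the common border. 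A density/invariance argument should then show that this typical configuration holds for every adjacent pair almost surely.

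Granted the unbounded borders, the rest parallels the discrete argument behind Theorem~\ref{thm:mainTrees}. For small $\lambda$, the Poisson--Voronoi tessellation on $M$ is (after rescaling) well-approximated by the IPVT, so a typical cell has arbitrarily many neighbors with arbitrarily long shared borders. Running a renormalization argument along these long borders, one colors cells black independently with probability $p>0$ and shows that the black cells form a single percolating network with positive probability. This yields $p_u(\lambda)\le p$ for all $\lambda<\lambda_0(p)$; letting $p\to 0$ gives $\lim_{\lambda\to 0}p_u(\lambda)=0$.

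The hard part will be the unbounded borders verification, since~\cite{FMW23} describes the IPVT cells themselves but not a priori the geometry of their pairwise intersections; a careful analysis of the induced measure on adjacent pairs is needed. A secondary difficulty lies in the transfer step: one must upgrade the qualitative IPVT statement to quantitative estimates at finite $\lambda$, showing that typical borders have length of order a diverging function of $1/\lambda$ and survive a coupling with the scaling limit, so that the renormalization can be carried out with explicit control.
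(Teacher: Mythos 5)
Your opening sentence correctly identifies the formal structure: Theorem~\ref{thm:mainHyp} is indeed deduced from Theorem~\ref{thm:mainSymm}. The paper does this in one line by writing $M = X\times Y$ with $X\coloneqq \mathbb H_{d_1}$ and $Y\coloneqq \mathbb H_{d_2}\times\ldots\times\mathbb H_{d_k}$ (using $k\ge 2$), noting that both factors are symmetric spaces of non-compact connected semisimple real Lie groups, and invoking Theorem~\ref{thm:mainSymm}. If that were all you had written, the review would end here.

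However, your sketch of why Theorem~\ref{thm:mainSymm} holds has a genuine gap, and you have also misidentified where the difficulty lies. You present the ``unbounded borders'' verification as the hard part and claim the remainder ``parallels the discrete argument'' via a ``renormalization argument along long borders.'' This gets the structure of the paper backwards. The unbounded borders phenomenon for $M$ is not new here: $M$ is a higher rank symmetric space, so it is already covered by~\cite{FMW23}, and the finitary consequence (local uniqueness at low intensities, in the sense of~\eqref{equ:localuniquenessSymmSpace}) is already in~\cite[Sections~3--4]{GR25}, which made no use of property~(T). Your proposed ``sweep along a flat factor'' heuristic is not how this is established --- the actual proof uses Howe--Moore-type mixing arguments --- but more to the point, it is not the bottleneck.

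The genuine new ingredient, and the one your sketch omits entirely, is the local-to-global upgrade (Theorem~\ref{thm:LocalToGlobalSymm}). On a non-amenable space, local uniqueness does \emph{not} imply long-range order: if your proposed renormalization argument worked as stated, it would equally apply to $\mathbb H_2$, where $p_u(\lambda)\to 1$ by Benjamini--Schramm. The paper bridges the gap by exploiting the product structure $M = X\times Y$ in a specific way: the non-amenability of (a lattice in) $X$ is used to run the shadowing method of~\cite{P00,PP00} --- producing, via the $\Gamma$-invariant percolation threshold of~\cite{BLPS99}, an infinite path along which local uniqueness events hold simultaneously on two $Y$-levels --- while the unboundedness of $Y$ is used for the sprinkling step that connects arbitrary pairs of points. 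Your sketch never identifies which property of each factor is being used or where sprinkling enters; a generic ``renormalization'' does not substitute for this. In previous work this role was played by property~(T), which $\prod_i \SO(d_i,1)^\circ$ lacks, and supplying a replacement is precisely the point of the paper. As written, your argument would not close.
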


In fact, combining results in this paper with the main result in~\cite{GR25}, we are also able to address the general connected and simply connected case.

\begin{theorem} \label{thm:mainSimplyConn}
Let $M$ be the symmetric space of a connected and simply connected higher rank semisimple real Lie group $G$. Then $p_u(\lambda)\to0$ as $\lambda\to0$.
\end{theorem}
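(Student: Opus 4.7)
The plan is to reduce the statement to the two settings in which the vanishing of $p_u(\lambda)$ has already been established in this paper and in \cite{GR25}, via the structure theory of semisimple Lie groups. Since $G$ is connected, simply connected and semisimple, the standard structure theorem gives a unique decomposition $G = G_1 \times \cdots \times G_k$ as a direct product of connected, simply connected, simple real Lie groups $G_i$. Correspondingly, choosing a maximal compact subgroup $K = K_1 \times \cdots \times K_k$, the symmetric space decomposes as the Riemannian (equivalently, $L^2$-) product $M = M_1 \times \cdots \times M_k$, where $M_i = G_i/K_i$ is the irreducible symmetric space of $G_i$. The higher-rank hypothesis $\operatorname{rank}_{\mathbb R}(G) = \sum_{i=1}^k \operatorname{rank}_{\mathbb R}(G_i) \ge 2$ then splits the problem into exactly two scenarios, according to whether $k \ge 2$ or $k = 1$.

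In the case $k \ge 2$, the space $M$ is a genuine Riemannian product of at least two symmetric spaces of simple real Lie groups. Each factor $G_i$ is either of real rank one, in which case $M_i$ is a rank-one (hyperbolic-type) symmetric space, or of real rank at least two, in which case Kazhdan's theorem guarantees that $G_i$ has property~(T). In either situation the factor $M_i$ satisfies the ``unbounded borders''-type input that this paper uses on each factor of a non-amenable product, so I would conclude this case directly by invoking the general product theorem, Theorem~\ref{thm:mainSymm} (the symmetric-space analogue of Theorem~\ref{thm:mainHyp}).

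In the case $k = 1$, the group $G$ itself is simple, and by the higher-rank hypothesis its real rank is at least two. Kazhdan's theorem then gives that $G$ has property~(T), so the main theorem of \cite{GR25} applies and yields $p_u(\lambda) \to 0$ as $\lambda \to 0$. Together, these two cases cover every connected, simply connected, higher-rank semisimple real Lie group.

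The main obstacle I anticipate is a bookkeeping one rather than a deep one: checking that the formulation of Theorem~\ref{thm:mainSymm} is uniform enough to accept an arbitrary Riemannian product of simple-factor symmetric spaces, including ``mixed'' products whose factors may individually be rank-one or higher-rank with property~(T). Simple connectedness of $G$ enters precisely here, as it ensures that $M$ is a genuine Riemannian product and not a quotient of one by a central subgroup mixing the factors, so that the factorwise ``unbounded borders'' input feeds into the product method of this paper without modification.
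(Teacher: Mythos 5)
Your approach is essentially the same as the paper's: decompose $G$ into simple factors and reduce either to Theorem~\ref{thm:mainSymm} or to the property~(T) case from \cite{GR25}. However, your case distinction -- based on whether the total number of simple factors satisfies $k \ge 2$ or $k=1$ -- has a genuine gap, because it overlooks the possibility of \emph{compact} (real rank zero) simple factors, which the hypotheses do not exclude.

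Concretely, consider $G = \mathrm{SL}(3,\mathbb{R}) \times \mathrm{SU}(2)$. This is connected, simply connected, semisimple, and higher rank (total real rank $2$), with $k=2$ simple factors. Your argument would route this into the ``$k\ge2$'' case and invoke Theorem~\ref{thm:mainSymm}; but that theorem requires both $X$ and $Y$ to be symmetric spaces of \emph{non-compact} connected semisimple Lie groups, whereas here the second factor contributes only a point, and $M$ is simply the symmetric space of $\mathrm{SL}(3,\mathbb{R})$. Your claim that ``Each factor $G_i$ is either of real rank one \dots or of real rank at least two'' is false in this situation. The paper's proof avoids the pitfall by distinguishing on the number of factors of real rank at least $1$ (i.e., the number of non-compact factors): if there are at least two such factors, split $M = X \times Y$ accordingly and apply Theorem~\ref{thm:mainSymm}; if there is at most one, that factor must have real rank $\ge 2$ and hence property~(T), and since compact groups also have property~(T), $G$ itself has property~(T), so \cite[Theorem~1.1]{GR25} applies. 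The fix is routine bookkeeping, but as written your case~$k\ge2$ argument does not go through.
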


Notice that Theorem~\ref{thm:mainHyp} is not a special case of Theorem~\ref{thm:mainSimplyConn} which requires the Lie group itself to be simply connected. Examples of simply connected Lie groups are provided by universal covering groups.

We emphasize that each of the above theorems can be used to produce interesting new examples of non-amenable groups with the FIID sparse unique infinite cluster property, see Section~\ref{sec:FSUICP} for details.

\subsection{Vanishing uniqueness thresholds in products}

We now describe our method and its most general applications.

As observed in~\cite{GR25}, the main difficulty in proving smallness of $p_u$ is that the existence of a unique unbounded cluster is a non-local phenomenon. This is reflected in the reduction
\begin{equation} \label{equ:LROpu}
p_u(\lambda) = \inf \Big\{ p \, : \, \inf_{x,y} \mathbb P\Big(x \xleftrightarrow{\omega_p^{(\lambda)}} y\Big)>0 \Big\},
\end{equation}
proved for Poisson--Voronoi percolation in symmetric spaces in~\cite[Theorem~1.9]{GR25}. The argument extends to Bernoulli--Voronoi percolation on locally finite, connected, transitive, unimodular graphs (see Theorem~\ref{thm:UniquenessLRO}). Equ.~\eqref{equ:LROpu} characterizes the uniqueness phase by a positive lower bound on the two-point function uniformly over all pairs of points in the space, reflecting the non-local nature. If such a two-point function lower bound exists, we say that there is {\bf long-range order}. Our method proves long-range order at sufficiently small intensities by combining two main ingredients described below.

The main ingredient compared to the approach in \cite{GR25} is to find a suitable substitute for property~(T). We will work with the condition that the underlying space decomposes, roughly speaking, as a product of a {\em non-amenable} space with an {\em unbounded} space. We prove two technical results, Theorem~\ref{thm:LocalToGlobal} and Theorem~\ref{thm:LocalToGlobalSymm}, which allow us to improve sufficiently strong local information about Bernoulli--Voronoi percolation, resp.~Poisson--Voronoi percolation, to long-range order. At this point, the reader may find it instructive to consider the non-amenable Cayley graph $\mathbb T_3$ in which no non-trivial version of local uniqueness is sufficient for global uniqueness. 


We also emphasize that this non-amenable product situation is essentially orthogonal to the property~(T) case. Connected and simply connected semisimple real Lie groups are isomorphic to products of simple Lie groups. Those without property~(T) contain a factor isomorphic to $\SO(n,1)$ or $\SU(n,1)$, rank $1$ groups without property~(T) (see e.g.~the introduction to \cite{BDLHV}). Hence such a semisimple Lie group without property~(T), if it is higher rank, must be isomorphic to a product of at least two factors, at least one of which does not have property~(T). In this case, its symmetric space decomposes as the Riemannian product of the symmetric spaces of its factors, i.e.~has a non-amenable product structure.

The second main ingredient is composed of recent results about low-intensity limits of Poisson--Voronoi {\em tessellations}, also used in \cite{GR25}. Poisson--Voronoi tessellations are central to stochastic geometry, see e.g.~\cite{SW08}. Probabilistic properties of Poisson--Voronoi tessellations when the underlining metric space is a Riemannian symmetric space have been studied e.g.~in~\cite{CCE,BPP18,P18,BKP21,M24}. Recently, it was discovered that in the low-intensity limit (i.e.,~$\lambda\to0$) in certain geometries a non-trivial limiting object arises, called the {\bf ideal Poisson--Voronoi tessellation} (IPVT). For instance, in Euclidean space the limiting tessellation is trivial (in the sense that it is one cell which is the whole space) while in hyperbolic space there is a limiting tessellation consisting of countably many unbounded, one-ended cells. 

Existence and uniqueness when the underlying metric spaces are discrete trees were first established by Bhupatiraju \cite{B19}. Then the existence of a non-trivial IPVT of the hyperbolic plane (called there ``pointless Voronoi tessellation'') was used by Budzinski, Curien and Petri \cite{BCP22} to establish upper bounds on the Cheeger constant of hyperbolic surfaces in large genus. 
In~\cite{DCELU23}, the first author, Curien, Enriquez, Lyons and {\"U}nel proved a deterministic theorem giving sufficient conditions for the Voronoi diagram to converge (in the Fell topology) to an ideal Voronoi diagram when $\lambda \to 0$ (see~\cite[Theorem 2.3]{DCELU23}). They then considered two examples: real hyperbolic space $\mathbb H_d$, $d\ge2$, in which case the diagram is also a tessellation, and provided a systematic study of ${\rm IPVT(\mathbb{H}_d)}$ based on a surprisingly simple Poissonian description of the zero cell (i.e., the size-biased typical cell); and ${\rm IPVT(\mathbb{T}_d)}$, whose study was partially started in \cite{B19}. As a further application to illustrate~\cite[Theorem 2.3]{DCELU23} beyond Riemannian manifolds, 
the first author built and studied (certain features of) the IPVT of the $L^1$-product of two hyperbolic planes in~\cite{D24}. 
Finally, the IPVT was constructed directly and studied for symmetric spaces and certain products of trees in~\cite{FMW23} (without a proof that it is the unique low-intensity limit),
where it was used to prove that higher rank semisimple real Lie groups (and their lattices) have fixed price $1$, partially resolving a conjecture of Gaboriau~\cite{Gaboriau2000}.
The fixed price $1$ result depends on a remarkable property of the IPVT, proved in \cite{FMW23} in the higher rank case but also in~\cite{D24}, and described in the next paragraph.

The IPVT on higher rank symmetric spaces and on $L^2$-products of regular trees of degree at least three was shown in~\cite{FMW23} to have the property that every pair of cells shares an unbounded boundary, almost surely. We will refer to this property as the {\bf unbounded borders phenomenon}. In particular, the neighboring relation of the Delaunay graph on the IPVT is the countable complete graph. A finitary analogue of this statement was proved in~\cite{GR25}. More precisely, it follows from~\cite[Theorem~1.19 \& Proposition 4.16]{GR25} that for every $N\ge1$, there exists $R>0$ such that the neighboring relation of the Poisson--Voronoi tessellation restricted to the ball of radius $R$ is a complete graph of size at least $N$ with high probability uniformly in sufficiently small $\lambda$. We use this result in our applications to symmetric spaces.

Having sketched our method for the continuum case, here is the promised most general application to this setting.

\begin{theorem}[\textsc{Products of symmetric spaces}]\label{thm:mainSymm}
 Let $X$ and $Y$ be symmetric spaces of non-compact connected semisimple real Lie groups $G$ and $H$. Then Poisson--Voronoi percolation on the Riemannian product $M\coloneqq X\times Y$ satisfies $p_u(\lambda)\to0$ as $\lambda\to0$. 
\end{theorem}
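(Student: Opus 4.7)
The plan is to combine the product-structure local-to-global upgrade of Theorem~\ref{thm:LocalToGlobalSymm} with the local clique statement for the Poisson--Voronoi tessellation available on higher rank symmetric spaces, and then invoke the characterization \eqref{equ:LROpu} of the uniqueness phase through long-range order.

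First I would record the structural facts about $M = X \times Y$. Since $G$ and $H$ are non-compact connected semisimple real Lie groups, each of $X$ and $Y$ is a non-amenable, unbounded Riemannian symmetric space, and the Riemannian product $M$ is the symmetric space of the semisimple real Lie group $G\times H$, whose real rank is at least $2$. In particular, $M$ has exactly the non-amenable-times-unbounded product structure that serves as a hypothesis for Theorem~\ref{thm:LocalToGlobalSymm}.

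Second, I would extract the local input. The higher-rank nature of $M$, together with the unbounded borders phenomenon of its IPVT, feeds into \cite[Theorem~1.19 \& Proposition~4.16]{GR25}: for every $N\ge 1$ there exists $R>0$ such that, with probability tending to $1$ uniformly in sufficiently small $\lambda$, the Voronoi neighboring graph restricted to cells intersecting the ball of radius $R$ around a distinguished basepoint $o\in M$ contains a clique of size at least $N$. This is precisely the kind of local information that Theorem~\ref{thm:LocalToGlobalSymm} is designed to accept.

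Third, feeding this local clique statement into Theorem~\ref{thm:LocalToGlobalSymm} upgrades it to the following long-range order statement: given any threshold $\eta>0$, one may find $p>0$ arbitrarily small together with $\lambda_0>0$ so that for every $\lambda\in(0,\lambda_0)$,
\[
\inf_{x,y\in M}\mathbb{P}\!\left(x\xleftrightarrow{\,\omega_p^{(\lambda)}\,} y\right)\ \ge\ \eta.
\]
Combining this with the long-range order characterization \eqref{equ:LROpu} yields $p_u(\lambda)\to 0$ as $\lambda\to 0$, as required.

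The conceptually hard step is Theorem~\ref{thm:LocalToGlobalSymm} itself, which we are permitted to use; once it is in hand, the only genuine checks are (i) that the local clique event supplied by the unbounded borders phenomenon is quantitatively strong enough and suitably uniform in $\lambda$ to match the hypothesis of Theorem~\ref{thm:LocalToGlobalSymm}, and (ii) that the factors $X$ and $Y$ are respectively non-amenable and unbounded, which is automatic from non-compactness and semisimplicity of $G$ and $H$. I expect the former (matching the exact form of the local hypothesis) to be the only place where one has to be careful, whereas the latter is a soft consequence of the symmetric-space dictionary.
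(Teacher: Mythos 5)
Your proposal is correct and follows the same route as the paper: $M$ is a higher-rank symmetric space, local uniqueness at low intensities is obtained from~\cite{GR25}, and Theorem~\ref{thm:LocalToGlobalSymm} together with the long-range-order characterization~\eqref{equ:LROpu} then gives $p_u(\lambda)\to 0$. The one imprecision, correctly flagged in your closing item~(i), is that the clique statement of~\cite[Theorem~1.19 \& Prop.~4.16]{GR25} must still be converted into the exact form of the local uniqueness hypothesis~\eqref{equ:localuniquenessSymmSpace} (one also needs that many cells meet a fixed ball so that at least one is colored black); the paper sidesteps this by citing~\cite[Section~3 \& 4]{GR25} directly for that derivation.
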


Theorem~\ref{thm:mainHyp} follows directly from Theorem~\ref{thm:mainSymm} and further natural examples beyond property~(T) are easily obtained by taking products with hyperbolic spaces.

\subsection{Remarks on the discrete case}\label{sec:introdiscrete}

The discrete situation is more subtle. As mentioned above, the unbounded borders phenomenon holds in $L^2$-products of trees of degree at least three \cite{FMW23}. However, for Bernoulli--Voronoi percolation we are interested in the graph product. One of our results is an elementary proof of the unbounded borders phenomenon for $k$-fold graph products (i.e., $L^1$-products) of regular trees of the same degree; see Theorem~\ref{thm:UnboundedTouching}. In fact, it turns out that the unbounded borders phenomenon is not valid for the same class of examples considered in \cite{FMW23} when equipped with the graph distance, which can be seen already for $\mathbb T_3\times\mathbb T_4$. Roughly speaking, the difference between the metrics arises as follows. In the low intensity limit on graph products whose factors have different growth rates, points of Poisson point processes do not escape to the boundary in {\em all} coordinates. This is a discrete counterpart to~\cite[Theorem 2.1]{D24}, which states that the ``boundary part'' of the corona measure is supported on a proper subset of the Gromov boundary (horofunction boundary) of $\mathbb{H}_2\times\mathbb{H}_2$. But this property is crucial: Consider two sequences of $L^1$-horofunctions centered around points which converge to infinity in such a way that, say, the second coordinates stays bounded. These will not agree on increasingly large level sets. When all factors are given by the same tree, we show that this problem does not arise and that the unbounded borders phenomenon holds (see Theorem~\ref{thm:UnboundedTouching}). 

To prove Theorem~\ref{thm:mainTrees} given the results in Section~\ref{sec:LocalToGlobal}, it suffices to establish local uniqueness for fixed $p$ and small intensity $\lambda$, which we deduce from the unbounded borders phenomenon for the IPVT. Although the metric (i.e., the $L^1$-product metric) is different here than in \cite{FMW23}, we prove similar statements along the way. First, we construct an invariant measure on the space of distance-like functions (we define this space in Section \ref{sec:Corona}) using the same method as in \cite[Section 3]{FMW23}. To prove unbounded borders for the IPVT for products of trees with equal degrees, we then explicitly use the structure of the graph instead of the Lie structure of the automorphism group as in~\cite[Section 5]{FMW23}. We do this in Section \ref{sec:Trees}. Proposition~\ref{pr:BasicCorona} and Theorem~\ref{thm:SubseqLimit} correspond to~\cite[Proposition 5.1, Lemma 5.4, Lemma 5.5]{FMW23}, and Proposition~\ref{prop:UnboundedStab} and Lemma~\ref{lm:HoweMoore} correspond to \cite[Proposition~5.8]{FMW23}. The remaining sketch of the proof of Theorem~\ref{thm:UnboundedTouching} is essentially the same as Theorem~6.1 in~\cite{FMW23}.

\begin{remark} \label{rem:Generalization}
Theorem~\ref{thm:mainTrees} can be extended to other direct products including, for $m\ge1$, $\mathbb T_4\times\mathbb T_4\times(\mathbb{T}_3)^m$, $\mathbb T_5\times\mathbb T_5\times\mathbb T_4\times\mathbb T_3$ and $\mathbb T_3\times\mathbb T_3\times \mathbb{Z}^m$. More generally, the proof can be extended to direct products $G\coloneqq (\mathbb T_d)^k\times H$, where $d\ge3$, $k\ge2$ and $H$ is a locally finite, connected, transitive, unimodular graph  such that the volume of the ball $B_r$ of radius $r$ in $H$ satisfies $|B_r|\in O((d-1)^r)$. See Remark~\ref{rem:GeneralProducts} for an outline of the necessary modifications of the proof. 
\end{remark}

\subsection{Applications to FIID  sparse unique infinite clusters}\label{sec:FSUICP}

We now describe applications of our results to some problems originating in ergodic theory. We start by recalling a definition: A finitely generated group~$\Gamma$ has the {\bf FIID sparse unique infinite cluster property} if there exists a Cayley graph of~$\Gamma$, denoted ${\rm Cay}(\Gamma)$, such that
$$
\inf \bigg\{ \int_{\omega\in \mathcal U(G)} {\rm deg}_\omega(1_{\Gamma}) \, d \mu(\omega) \colon \mu\in F_{{\rm IID}}(\Gamma,\mathcal U({\rm Cay}(\Gamma))) \bigg\} = 0,
$$
where $\mathcal U({\rm Cay}(\Gamma))$ is the set of subgraphs of ${\rm Cay}(\Gamma)$ with a unique infinite cluster, $F_{{\rm IID}}(\Gamma,\mathcal U({\rm Cay}(\Gamma)))$ is the set of probability measures on $\mathcal U({\rm Cay}(\Gamma))$ which are ($\Gamma$-equivariant) factors of iid~processes (FIID) on $\Gamma$ and $1_\Gamma$ denotes the identity of $\Gamma$. More concretely, this means that for every $\varepsilon>0$ there is a FIID construction of a random subgraph which has a unique infinite cluster but is also sparse in the sense that its average density is at most $\varepsilon$.

In addition to the probabilistic interest in this property, its relevance stems from the fact that it implies fixed price~$1$; we refer to~\cite[Section 1.1]{GR25} for detailed background. For the purposes of this introduction, it is instructive to think of the canonical sparse FIID model, namely Bernoulli bond percolation with survival probability~$p$ tending to zero. It is not difficult to see that bounded degree graphs have $p_c>0$, i.e.~there are only finite clusters for~$p$ sufficiently small. To produce not only sparse infinite clusters but unique ones, more elaborate constructions are thus needed. For one-ended amenable groups, this is possible~\cite{T19}; see also \cite{TT13,BRR23,L13} for related results. In the opposite direction, the following question was posed implicitly in groundbreaking work of Hutchcroft and Pete \cite[Remark~4.4]{HP2020} and explicitly by Pete and Rokob~\cite{PR25}.

\begin{question}[{\cite[Question~1.5]{PR25}}] \label{qu:FSUIC} Give interesting examples of non-amenable Cayley graphs with FIID sparse unique infinite clusters.
\end{question} 

In~\cite[Theorem 1.5]{GR25}, an answer to Question~\ref{qu:FSUIC} was given using the phenomenon of vanishing uniqueness thresholds in Poisson--Voronoi percolation on the symmetric space of a connected higher rank semisimple real Lie group $G$ with property~(T) to construct FIID sparse unique infinite clusters in co-compact lattices of~$G$. It was also observed in~\cite[Remark 1.7]{GR25} that vanishing uniqueness thresholds for the discrete Bernoulli--Voronoi model would immediately imply the FIID sparse unique infinite cluster property. Indeed, let $\varepsilon>0$, then there exists $\lambda_0>0$ with $p_u(\lambda_0)<\varepsilon/d$, where $d$ is the degree in the graph. By keeping all edges between black vertices, one obtains an FIID bond percolation with expected degree at most $\varepsilon$ and a unique infinite cluster. With this background in mind, we now describe how the results proved in this paper fit into the above picture.

\medskip

{\bf\noindent Applications in the discrete setting.} Theorem~\ref{thm:mainTrees} provides new examples of non-amenable Cayley graphs with the FIID sparse unique infinite cluster property. The main point, however, is that we prove vanishing uniqueness thresholds for the graph itself and thus provide an affirmative answer to Question~\ref{qu:FSUIC} using one of the most natural FIID models beyond Bernoulli percolation (note that Bernoulli percolation never works, as pointed out above). 

Moreover, we emphasize that Theorem~\ref{thm:mainTrees} does not only imply fixed price~$1$ but strengthens the main result proved in~\cite{FMW23} to a finitary version while avoiding certain technicalities. More precisely, in that paper the unbounded borders phenomenon is leveraged to construct, for every $\varepsilon>0$, a random spanning graph $H$ of $G$ with average degree at most $2+\varepsilon$ as a {\em weak limit} of factors of iid processes. As a further technical ingredient, their construction requires proving that IPVT cells are hyperfinite. Using our approach, we are able to construct $H$ as a factor of iid without passing to a limit. We also construct $H$ as a genuine subgraph of~$G$. Finally, the tessellations we employ have finite cells and we are thus able to avoid technicalities associated to proving hyperfiniteness of infinite limiting cells.

\medskip

{\bf\noindent Applications in the continuum setting.}  For the symmetric space $M$ of a connected semisimple real Lie group $G$, the phenomenon of vanishing uniqueness thresholds has two main applications proved in \cite[Section~8 \& Section~9]{GR25}:
\begin{itemize}
\item It implies that the unit intensity Poisson point process $\Pi$ on $G$ (i.e., the Poisson point process with intensity measure given by Haar measure on $G$) equipped with iid ${\rm Unif}[0,1]$ marks satisfies: For every $\varepsilon>0$, there exists a $G$-equivariant factor graph $\mathcal H$ of $\Pi$ with a unique infinite cluster and $\mathbb E[{\rm deg}_{\mathcal H(\Pi_0)}(1_G)]\le\varepsilon$, where $\Pi_0\coloneqq \Pi\cup\{1_G\}$ denotes the Palm version; we refer the reader to~\cite[Chapter 3]{LP2018} for background on Poisson processes and to~\cite[Chapter 9]{LP2018} for an introduction to Palm theory. 

Again, the above property is a finitary version of the main result proved in \cite{FMW23} where for every $\varepsilon>0$ a weak limit of factor graphs of the Poisson point process with average degree at most $2+\varepsilon$ is constructed. As alluded to in the discrete case, we work with small but fixed intensity $\lambda$ as opposed to the low intensity limit, thereby producing a factor graph and avoiding technicalities associated to proving hyperfiniteness. Remark that in the setting of Theorem~\ref{thm:mainSimplyConn} and Theorem~\ref{thm:mainSymm}, this property of the Poisson point process is new when the associated Lie group does not have property~(T). We also point out that an analogous statement holds for the Poisson point process on $M$.

\item It implies that Cayley graphs of co-compact lattices $\Gamma\subset G$ have the FIID sparse unique infinite cluster property. Since $G$ is well known to admit co-compact lattices and these lattices have property~(T) if and only if $G$ does, Theorem~\ref{thm:mainSymm} provides several new examples with the FIID sparse unique infinite cluster property.
\end{itemize}

\medskip

{\bf\noindent Acknowledgements}. M.D'A.~acknowledges support by the ERC Consolidator Grant SuperGRandMA (Grant No.~101087572) and by the ANR project LOUCCOUM (ANR-24-CE40-7809).
J.G. was supported by MSCA Postdoctoral Fellowships 2022 HORIZON-MSCA-2022-PF01-01 project BORCA grant agreement number 101105722, and by the Alexander von Humboldt Foundation in the framework of the Alexander von Humboldt Professorship of Daniel Kráľ endowed by the Federal Ministry of Education and Research.
For the purpose of open access, the authors have applied a CC BY public copyright licence to any author accepted manuscript arising from this submission.

\medskip
{\noindent\bf Notation.} We record here some notations used throughout.

Let $G=(V,E)$ be a locally finite, connected (undirected) graph. We  define the notation $V(G)\coloneqq V$ and $E(G)\coloneqq E$ for the vertex and edge sets. We use the notation $e=[u,v]$ for an edge $e$ with endpoints $u$ and $v$. In this case, we also write $u\sim v$ and say that $u$ and $v$ are neighbors. We endow $G$ with the {\bf graph distance} ${\rm dist}(\cdot,\cdot)={\rm dist}_G(\cdot,\cdot)$ which assigns to vertices $u,v\in V$ the length of a shortest path between them. We denote by $B_R(v)$ the ball of radius $R\ge1$ around $v\in V$, that is all vertices $u\in V$ such that ${\rm dist}(u,v)\le R$. 

If $H$ is another locally finite, connected graph, the {\bf direct product} $G\times H$ of $G$ and $H$ is the graph  with vertex set $V(G)\times V(H)$ and edges between all vertices $(u_1,u_2)$ and $(v_1,v_2)$ such that $u_1=v_1$ and $u_2\sim v_2$, or $u_2=v_2$ and $u_1\sim v_1$. Note that 
$$
{\rm dist}_{G\times H}((u_1,v_1),(u_2,v_2))= {\rm dist}_G(u_1,u_2)+{\rm dist}_H(v_1,v_2),
$$
i.e.~the direct product is the $L^1$-product of the metric spaces $(G,{\rm dist}_G)$ and $(H,{\rm dist}_H)$.

A {\bf site percolation} on $G$ is a random subset $\omega$ of vertices and a {\bf bond percolation} is a random subset of edges. We denote by $\{u\overset{\omega}{\leftrightarrow} v\}$ the event that there exists a path between vertices $u$ and $v$ in the subgraph induced by $\omega$. If $\mu$ is the distribution of $\omega$, we simply write $\mu(u\leftrightarrow v)$.

Let $f$ and $g$ be positive real-valued functions defined on the same unbounded subset of $(0,\infty)$ (e.g.~$\mathbb \{1,2,\ldots\}$). We write $f(x)\in O(g(x))$ as $x\to\infty$ if there exists $C>0$ such that $f(x)\le Cg(x)$ for all sufficiently large $x$. We write $f(x)\in o(g(x))$ as $x\to\infty$ if for every $c>0$, $f(x)\le cg(x)$ for all sufficiently large $x$. Finally, we write $f(x)\in\Theta(g(x))$ if $f(x)\in O(g(x))$ and $g(x)\in O(f(x))$.

\section{Bernoulli--Voronoi percolation on graphs}\label{sec:Basic}

In this section, we provide the formal definition of Bernoulli--Voronoi percolation and recall fundamental properties. 

\subsection{Description of the model} \label{sec:model}

Let $G=(V,E)$ be a locally finite, connected graph with distinguished root $o\in V$. Given $\lambda\in(0,1]$, let 
\begin{equation*}
\mathbf X^{(\lambda)}=\big\{(X_1^{(\lambda)},Y_1^{(\lambda)}),(X_2^{(\lambda)},Y_2^{(\lambda)}),\ldots\big\}
\end{equation*}
be such that 
\begin{itemize}
\item the sequence $X^{(\lambda)}\coloneqq \big\{X_1^{(\lambda)},X_2^{(\lambda)},\ldots\big\}$ is $\lambda$-Bernoulli site percolation ordered according to increasing distance from the root,
\item the sequence $Y^{(\lambda)}\coloneqq \{Y_1^{(\lambda)},Y_2^{(\lambda)},\ldots\}$ consists of iid ${\rm Unif}[0,1]$-labels and is independent of $X^{(\lambda)}$.
\end{itemize}
The associated {\bf Voronoi diagram} is defined to be 
\begin{equation} \label{def:VoronoiDiagram}
\mathrm{Vor}(\mathbf{X}^{(\lambda)})=\big\{C_1^{(\lambda)},C_2^{(\lambda)},\ldots\big\},
\end{equation}
where $C_i^{(\lambda)}\subset V$ consists of all vertices closer to $X_i^{(\lambda)}$ (w.r.t.~the graph distance) than to any of the other point of $\mathbf{X}^{(\lambda)}$ and, in case of a tie, vertices are allocated to the random points with minimal label $Y_i^{(\lambda)}$.

The choice of tie breaking rule implies that if $v\in V$ is allocated to $X_i^{(\lambda)}$, then every point on a shortest path between $v$ and $X_i^{(\lambda)}$ is allocated to $X_i^{(\lambda)}$ as well. In particular, $\mathrm{Vor}(\mathbf{X}^{(\lambda)})$ is a partition of $G$ into connected subsets $C_i^{(\lambda)}\subset V$ a.s. 

We will refer to each $C_i^{(\lambda)}$ as a {\bf cell.} Typically, cells are finite; see Proposition~\ref{prop:VoronoiBasics}. Given $p\in[0,1]$, let 
\begin{equation*}
\mathrm{Vor}(\mathbf{X}^{(\lambda)})_p = \big\{ B_1^{(\lambda)},B_2^{(\lambda)},\ldots \big\}
\end{equation*}
be obtained from $\mathrm{Vor}(\mathbf{X}^{(\lambda)})$ by independently retaining or deleting each cell with retention probability $p$, which we interpret as an independent black-and-white coloring 
where black cells correspond to retained cells 
$B_i^{(\lambda)}$. 
Let 
\begin{equation*}
\omega^{(\lambda)}_{p} = \bigcup_{i=1,2,\ldots} B_i^{(\lambda)}
\end{equation*}
denote the random subset of vertices consisting of all the vertices belonging to black cells. We refer to this site percolation model as $(\lambda,p)$-{\bf Bernoulli--Voronoi percolation} or simply as {\bf Bernoulli--Voronoi percolation} on $G$.

We denote by $\mathbb P_{p}^{(\lambda)}$ the distribution of $\omega^{(\lambda)}_{p}$. We write $\mathcal B_p = \mathrm{Vor}(\mathbf{X}^{(\lambda)})_p$ for the collection of black cells and
\vspace{1mm}
 \begin{equation*}
 \mathcal W_p \coloneqq  \mathrm{Vor}(\mathbf{X}^{(\lambda)}) \setminus \mathrm{Vor}(\mathbf{X}^{(\lambda)})_p = \big\{ W_1^{(\lambda)},W_2^{(\lambda)},\ldots \big\}
 \end{equation*}
 for the collection of white cells $W_i^{(\lambda)}$.
 We denote by 
 $C^{(\lambda)}(v)$ the Voronoi cell of $v\in V$ in $\mathrm{Vor}(\mathbf{X}^{(\lambda)})$, that is, the unique cell to which the vertex $v$ belongs.
A \textbf{cluster} is any connected component in $\omega^{(\lambda)}_{p}$.
 We write $\mathcal C_p^{(\lambda)}(v)$ for the cluster of $v$ in $\omega^{(\lambda)}_{p}$, which might be empty if $C^{(\lambda)}(v)\not\in \mathcal B_p$.

The following result summarizes basic properties, see e.g. \cite[Section~8.5]{LP16}.

\begin{proposition} \label{prop:VoronoiBasics}
    Let $G=(V,E,o)$ be a locally finite, connected, transitive, unimodular, rooted graph and let $\lambda\in(0,1]$, $p\in[0,1]$. Then
    \begin{itemize}
        \item[{\rm(i)}] all cells $C_i^{(\lambda)}$ are finite almost surely,
        \item[{\rm(ii)}] $\omega_p^{(\lambda)}$ satisfies the FKG-Inequality,
        \item[{\rm(iii)}] $\omega_p^{(\lambda)}$ defines an FIID percolation, the factor map being equivariant with respect to all automorphisms of $G$,
        \item[{\rm(iv)}] $\omega_p^{(\lambda)}$ is ergodic with respect to every subgroup $\Gamma\subset\mathrm{Aut}(G)$ with an infinite orbit.
    \end{itemize}     
\end{proposition}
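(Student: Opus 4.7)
The plan is to prove the four parts by combining standard tools: the mass-transport principle for (i), a conditional FKG argument for (ii), an explicit FIID construction for (iii), and ergodicity of Bernoulli shifts (together with (iii)) for (iv). The whole package closely follows the treatment of continuum Voronoi percolation in \cite[Section~8.5]{LP16}.

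For (i), I apply the mass-transport principle, which is available since $G$ is transitive and unimodular. Transport one unit of mass from each vertex $u$ to the a.s.\ unique point of $\mathbf{X}^{(\lambda)}$ to which $u$ is allocated: set $f(u,v) = \mathbf{1}[u \in C^{(\lambda)}(v),\ v \in X^{(\lambda)}]$. Each vertex sends exactly one unit of mass (cells a.s.\ partition $V$ by construction), and the mass received by $o$ is $\mathbf{1}[o \in X^{(\lambda)}]\,|C^{(\lambda)}(o)|$. The MTP then gives $\lambda\,\mathbb{E}\bigl[\,|C^{(\lambda)}(o)| \bigm| o \in X^{(\lambda)}\bigr] = 1$, so the size-biased typical cell has expected size $1/\lambda$; in particular it is a.s.\ finite. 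Since there are only countably many Bernoulli points, all cells are a.s.\ finite simultaneously.

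For (ii), condition on the tessellation $\mathrm{Vor}(\mathbf{X}^{(\lambda)})$. The coloring of the cells is then an independent Bernoulli$(p)$ product, $\omega_p^{(\lambda)}$ is a monotone function of these coloring variables, and any two events $A,B$ that are increasing in $\omega_p^{(\lambda)}$ are positively correlated conditionally by the classical product-FKG inequality. The subtle step, which I expect to be the main obstacle, is to upgrade this to an unconditional statement, since the random variables $\mathbb{P}(A\mid\mathrm{Vor})$ and $\mathbb{P}(B\mid\mathrm{Vor})$ are not manifestly positively correlated as functions of the tessellation; this is handled as in \cite[Section~8.5]{LP16}.

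For (iii), I write the factor map explicitly. Attach to each vertex $v$ an iid triple $(U_v,V_v,W_v)$ with $U_v\sim\mathrm{Bernoulli}(\lambda)$, $V_v\sim\mathrm{Unif}[0,1]$, and $W_v\sim\mathrm{Bernoulli}(p)$, all independent across $v$. Set $X^{(\lambda)}=\{v:U_v=1\}$, use the $V_v$ for tie-breaking in the Voronoi partition, and declare the cell of $X_i^{(\lambda)}$ black iff $W_{X_i^{(\lambda)}}=1$. Then $\omega_p^{(\lambda)}$ is a Borel function of $(U_v,V_v,W_v)_{v\in V}$ that commutes with every $\phi\in\mathrm{Aut}(G)$, since both the graph distance and the allocation rule are $\mathrm{Aut}(G)$-equivariant. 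Finally, for (iv), any $\Gamma\subset\mathrm{Aut}(G)$ with an infinite orbit acts on the iid label space $\bigl([0,1]\times\{0,1\}^2\bigr)^V$ by a Bernoulli shift, which is mixing and hence ergodic; since $\omega_p^{(\lambda)}$ is a $\Gamma$-equivariant factor of this Bernoulli shift by (iii), it inherits ergodicity.
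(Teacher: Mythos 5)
Your proposal supplies the argument that the paper itself delegates entirely to \cite[Section~8.5]{LP16}, so there is no competing in-paper proof to contrast against; and the content of (i), (iii) and (iv) is the standard, correct package. Two small remarks on those parts. In (i), the conditional cell $C^{(\lambda)}(o)$ given $o\in X^{(\lambda)}$ is the \emph{typical} (Palm) cell, not the size-biased one, but your mass-transport identity $\lambda\,\mathbb E[|C^{(\lambda)}(o)|\mid o\in X^{(\lambda)}]=1$ and the conclusion are correct; and in (iv) you implicitly use the fact that for a locally finite connected graph a subgroup of $\aut(G)$ with one infinite orbit automatically has \emph{all} orbits infinite (otherwise finite orbits would ruin ergodicity of the Bernoulli base), which does hold here and is worth stating.

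The one genuinely soft spot is (ii), and to your credit you flag it yourself: the conditional Harris inequality $\mathbb P(A\cap B\mid\mathrm{Vor})\ge\mathbb P(A\mid\mathrm{Vor})\mathbb P(B\mid\mathrm{Vor})$ does not yield the unconditional statement without controlling $\mathbb E\bigl[\mathbb P(A\mid\mathrm{Vor})\,\mathbb P(B\mid\mathrm{Vor})\bigr]$, and there is no obvious ordering on tessellations that makes those conditional probabilities positively correlated (indeed events such as $\{0,1\text{ both black}\}$ and $\{0\text{ or }1\text{ black}\}$ react to cell coarsening in opposite directions). Deferring that step is what the paper does too. A cleaner route is not to condition on the tessellation at all but to encode nucleus and colour in one i.i.d.\ field: attach $Z_v\sim\mathrm{Unif}[0,1]$ and declare $v$ a black nucleus iff $Z_v<\lambda p$ and a white nucleus iff $Z_v>1-\lambda(1-p)$; decreasing $Z_v$ either creates a black nucleus or removes a white one, and in both cases $\omega_p^{(\lambda)}$ weakly increases for any fixed tie-break labels. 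This makes $\omega_p^{(\lambda)}$ a monotone function of the single field $Z$ conditionally on the tie-break layer, which is where a complete proof should concentrate the remaining work; simply citing the reference, as you and the authors both do, is acceptable but does leave the argument not self-contained.
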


\subsection{The uniqueness threshold} 

The {\bf critical probability} is defined to be 
\begin{equation*}
p_c(\lambda)=\inf \big\{ p \, : \, \omega^{(\lambda)}_{p} \, \, \text{has an infinite cluster w.p.p.} \big\}
\end{equation*}
and the {\bf uniqueness threshold} is defined to be
\begin{equation*}
p_u(\lambda)=\inf \big\{ p \, : \, \omega^{(\lambda)}_{p} \, \, \text{has a unique infinite cluster w.p.p.} \big\}.
\end{equation*}

The following characterization of the uniqueness phase in terms of long-range order goes back essentially to \cite{LS99}.

\begin{theorem}[\textsc{Uniqueness and long-range order}]  \label{thm:UniquenessLRO}
Let $G=(V,E,o)$ be a locally finite, connected, transitive, unimodular, rooted graph. Then 
\begin{equation} \label{equ-UniquenessLRO}
p_u(\lambda) = \inf \Big\{ p \, : \, \inf_{u,v\in V} \mathbb P_{p}^{(\lambda)}( u \leftrightarrow v)>0 \Big\}.
\end{equation}
\end{theorem}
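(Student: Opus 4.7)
The plan is to prove both inequalities in~\eqref{equ-UniquenessLRO} separately, adapting the framework of Lyons and Schramm~\cite{LS99} from Bernoulli percolation to the Voronoi setting. For the lower bound $p_u(\lambda)\ge \inf\{p:\inf_{u,v}\mathbb P_p^{(\lambda)}(u\leftrightarrow v)>0\}$, fix $p>p_u(\lambda)$. By definition, $\omega_p^{(\lambda)}$ admits a unique infinite cluster with positive probability, hence almost surely by the ergodicity in Proposition~\ref{prop:VoronoiBasics}(iv). Write $\theta(p)\coloneqq\mathbb P_p^{(\lambda)}(|\mathcal C_p^{(\lambda)}(o)|=\infty)>0$, which by transitivity is independent of the chosen vertex. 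Since the event $\{|\mathcal C_p^{(\lambda)}(v)|=\infty\}$ is increasing in $\omega_p^{(\lambda)}$, the FKG inequality of Proposition~\ref{prop:VoronoiBasics}(ii), combined with the fact that uniqueness forces any two vertices in infinite clusters to be connected in $\omega_p^{(\lambda)}$, yields
\begin{equation*}
\mathbb P_p^{(\lambda)}(u\leftrightarrow v)\ge\mathbb P_p^{(\lambda)}\big(|\mathcal C_p^{(\lambda)}(u)|=|\mathcal C_p^{(\lambda)}(v)|=\infty\big)\ge\theta(p)^2>0
\end{equation*}
uniformly in $u,v\in V$, as desired.

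For the reverse inequality, suppose $c\coloneqq\inf_{u,v\in V}\mathbb P_p^{(\lambda)}(u\leftrightarrow v)>0$; the goal is to show that $\omega_p^{(\lambda)}$ has a unique infinite cluster almost surely, so that $p\ge p_u(\lambda)$. Conditionally on $\mathrm{Vor}(\mathbf X^{(\lambda)})$, the cell colors are iid Bernoulli$(p)$, providing a cell-flip form of insertion tolerance. Combined with the equivariance in Proposition~\ref{prop:VoronoiBasics}(iii), a standard Newman--Schulman argument then forces the number $N_\infty$ of infinite clusters to be an almost sure constant in $\{0,1,\infty\}$. The case $N_\infty=0$ is ruled out immediately: cells are almost surely finite by Proposition~\ref{prop:VoronoiBasics}(i), hence so are all clusters in this case, and dominated convergence yields $\mathbb P_p^{(\lambda)}(u\leftrightarrow v)\le\mathbb P_p^{(\lambda)}(\mathrm{diam}(\mathcal C_p^{(\lambda)}(u))\ge\mathrm{dist}(u,v))\to 0$ as $\mathrm{dist}(u,v)\to\infty$, contradicting $c>0$.

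The main obstacle is the case $N_\infty=\infty$. The plan is to adapt the Lyons--Schramm argument, substituting cell-flip insertion tolerance for the edge-insertion tolerance used in classical Bernoulli percolation, and invoking the mass transport principle (valid by the unimodularity and transitivity of $G$). Concretely, one introduces a suitable mass transport counting ``encounter'' structures between distinct infinite clusters within a large ball, and uses unimodularity to bound the expected number of such encounters at a typical vertex; this forces the probability that two distant vertices lie in a common cluster to vanish as $\mathrm{dist}(u,v)\to\infty$, again contradicting $c>0$. Ruling out both extreme cases leaves $N_\infty=1$ almost surely, establishing $p\ge p_u(\lambda)$ and completing the proof.
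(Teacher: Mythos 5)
Your first inequality, $p_u(\lambda)\ge p_{\rm LRO}(\lambda)$, is correct and essentially identical to the paper's argument (FKG plus transitivity plus the a.s.\ trichotomy). Your reduction for the converse---pass to Bernoulli percolation on the Delaunay graph to recover insertion tolerance, use the Newman--Schulman trichotomy $N_\infty\in\{0,1,\infty\}$, and rule out $N_\infty=0$ by finiteness of cells---also matches the paper.

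The gap is in the step ruling out $N_\infty=\infty$, which is the crux of the whole theorem. You propose ``a mass transport counting encounter structures between distinct infinite clusters within a large ball'' and assert this forces the two-point function to vanish. This is not the right tool and, as stated, it does not work. Encounter/trifurcation counting is the Burton--Keane mechanism for ruling out multiple infinite clusters on \emph{amenable} graphs; the present setting is precisely the non-amenable one, where $N_\infty=\infty$ genuinely occurs for a nontrivial range of $p$, so no mass-transport encounter bound can forbid it. More to the point, a bound on the expected number of ``encounters'' at a typical vertex does not, on its own, imply that $\mathbb P_p^{(\lambda)}(u\leftrightarrow v)\to 0$ as $\mathrm{dist}(u,v)\to\infty$; there is no argument here connecting the two. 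The ingredient your proposal is missing is the notion of \emph{cluster frequency} (Lemma~\ref{lm:frequ}, going back to~\cite{LS99}): long-range order implies that the cluster of the origin has positive expected frequency. From there one must show that having infinitely many infinite clusters is incompatible with some cluster having positive frequency. The Lyons--Schramm route does this via cluster indistinguishability (so that all infinite clusters would have the same frequency type, forcing all frequencies to vanish); the paper instead deliberately avoids proving indistinguishability in the Delaunay model and uses the ``special cluster'' sprinkling argument from the proof of uniqueness monotonicity (\cite{HP99}, cf.~\cite[Theorem 5.4]{HJ06}): long-range order at $p$ produces finitely many clusters of maximal frequency, and every infinite $\omega_q^{(\lambda)}$-cluster for $q>p$ must absorb one of them, giving uniqueness at every $q>p$ and hence $p\ge p_u(\lambda)$. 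Without cluster frequencies and one of these two mechanisms, your argument does not close, so as written the converse inequality is not established.
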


Note that Voronoi percolation is not insertion tolerant (and also not deletion tolerant). Therefore Theorem \ref{thm:UniquenessLRO} does not follow directly from \cite{LS99}. However, as observed in \cite{GR25}, it is possible to use a form of insertion tolerance by viewing Voronoi percolation as Bernoulli percolation on the Delaunay graph. Details are collected in Appendix~\ref{app:Proofs}. The argument presented there also implies the following result.

\begin{proposition}[\textsc{Phase transition of Voronoi percolation}] \label{prop:Phases}
Let $G=(V,E,o)$ be a locally finite, connected, transitive, unimodular, rooted graph and let $\lambda\in(0,1]$, $p\in[0,1]$. Then the following hold:
\begin{itemize}
\item[{\rm(i)}] For $p<p_c(\lambda)$, $\omega^{(\lambda)}_p$ has no infinite cluster a.s.
\item[{\rm(ii)}] For $p_c(\lambda) < p < p_u(\lambda)$, $\omega^{(\lambda)}_p$ has infinitely many infinite clusters a.s.
\item[{\rm(iii)}] For $p>p_u(\lambda)$, $\omega^{(\lambda)}_p$ has a unique infinite cluster a.s.
\end{itemize}
\end{proposition}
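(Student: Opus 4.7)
The plan is to combine three ingredients. First, the family $(\omega_p^{(\lambda)})_{p\in[0,1]}$ admits a monotone coupling obtained by drawing an iid $\mathrm{Unif}[0,1]$ label $U_i$ for each cell $C_i^{(\lambda)}$ and declaring $C_i^{(\lambda)}$ black exactly when $U_i\le p$; under this coupling, every connection event $\{u\leftrightarrow v\}$ is monotone in $p$. Second, by $\mathrm{Aut}(G)$-ergodicity (Proposition~\ref{prop:VoronoiBasics}(iv)), the number $N_p$ of infinite clusters of $\omega_p^{(\lambda)}$ and the indicator of any $\mathrm{Aut}(G)$-invariant event such as $\{N_p=1\}$ are almost surely constant for each $p$. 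Third, a Burton--Keane type dichotomy yields $N_p\in\{0,1,\infty\}$ almost surely.

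The third ingredient is the technical heart of the proof. The classical Burton--Keane argument applied to $G$ fails because $\omega_p^{(\lambda)}$ is neither insertion nor deletion tolerant as a site percolation on $G$: flipping a single vertex alters only a small portion of one cell. Following the approach indicated in \cite{GR25}, I will transfer the analysis to the Delaunay graph $\mathcal D = \mathcal D(\mathbf X^{(\lambda)})$ whose vertex set is $\mathrm{Vor}(\mathbf X^{(\lambda)})$ and whose edges connect pairs of cells containing $G$-adjacent vertices. Conditionally on $\mathcal D$, the coloring of cells is iid Bernoulli$(p)$ and is therefore both insertion and deletion tolerant. Since cells are a.s.~finite by Proposition~\ref{prop:VoronoiBasics}(i), the infinite clusters of $\omega_p^{(\lambda)}$ in $G$ are in bijection with the infinite connected sets of black cells in $\mathcal D$. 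Equipped with its natural decoration by $G$, the graph $\mathcal D$ becomes a unimodular random graph, and the standard Burton--Keane argument now runs on $\mathcal D$ to produce the dichotomy.

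Given these three ingredients, the three claims unwind as follows. For (i), any $p<p_c(\lambda)$ lies outside the defining set of $p_c(\lambda)$, so $\mathbb P_p^{(\lambda)}(N_p\ge 1)=0$ and hence $N_p=0$ almost surely. For (ii), the monotone coupling together with the infimum definition of $p_c(\lambda)$ forces $\mathbb P_p^{(\lambda)}(N_p\ge 1)>0$ for $p>p_c(\lambda)$, which combined with ergodicity and the dichotomy gives $N_p\in\{1,\infty\}$ almost surely; if additionally $p<p_u(\lambda)$, the infimum characterization of $p_u(\lambda)$ and ergodicity yield $\mathbb P_p^{(\lambda)}(N_p=1)=0$, so that $N_p=\infty$ almost surely. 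For (iii), Theorem~\ref{thm:UniquenessLRO} identifies $p_u(\lambda)$ with the long-range order threshold; since each two-point function $\mathbb P_p^{(\lambda)}(u\leftrightarrow v)$ is monotone in $p$, so is its infimum over pairs, and hence the long-range order set is an up-set. Thus any $p>p_u(\lambda)$ exhibits long-range order, which by Theorem~\ref{thm:UniquenessLRO} implies existence of a unique infinite cluster with positive probability. Ergodicity and the dichotomy then upgrade this to $N_p=1$ almost surely.

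The main obstacle I expect is verifying unimodularity of $\mathcal D$ in a form strong enough to run Burton--Keane and to support the translation between infinite components in $G$ and in $\mathcal D$. This amounts to a mass transport computation between vertices of $G$ and cells of $\mathcal D$ and relies crucially on the unimodularity of $G$. Additional care is needed because the Delaunay adjacency depends on both the point process $X^{(\lambda)}$ and the tiebreaking labels $Y^{(\lambda)}$, and one must check that the bijection between infinite clusters in $G$ and infinite monochromatic components in $\mathcal D$ is equivariant with respect to $\mathrm{Aut}(G)$ so that ergodicity transfers cleanly between the two points of view.
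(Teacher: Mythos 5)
Your overall route — encode the Voronoi diagram via the Delaunay graph $\mathcal D$, observe that conditionally on $\mathcal D$ the coloring is Bernoulli and hence insertion/deletion tolerant, check $\mathcal D$ is a unimodular random graph via mass transport, and then transfer back to $G$ — is exactly the paper's approach (Proposition~\ref{prop:BernoulliDelaunay} together with Lemma~\ref{lm:Palm}). Your treatment of (i) and (ii) from monotone coupling, ergodicity, and the Burton--Keane dichotomy on $\mathcal D$ is sound.

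However, your argument for (iii) has a genuine gap. You write that any $p>p_u(\lambda)$ exhibits long-range order, ``which by Theorem~\ref{thm:UniquenessLRO} implies existence of a unique infinite cluster with positive probability.'' Theorem~\ref{thm:UniquenessLRO} only asserts the \emph{equality of two thresholds}; it does not assert that a fixed $p$ above that common threshold actually lies in the set $\{p:\omega_p^{(\lambda)}\text{ has a unique infinite cluster w.p.p.}\}$. To conclude that, you need this set to be an up-set, i.e.\ \emph{monotonicity of uniqueness} in $p$, which is a non-trivial theorem (H\"aggstr\"om--Peres, Schonmann) and is not a consequence of your three ingredients. Monotone coupling plus ergodicity plus Burton--Keane rule out $N_p=0$ for $p>p_u(\lambda)\ge p_c(\lambda)$, but they do \emph{not} rule out $N_p=\infty$; indeed, in other models one can have $N_p=\infty$ at some $p$ with $N_{p'}=1$ for $p'<p$ absent additional structure. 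The paper avoids this by citing the Aldous--Lyons sharp uniqueness threshold for unimodular random graphs (\cite[Theorem~6.7]{AL07}), packaged here as Proposition~\ref{prop:BernoulliDelaunay}~(iv); that result (whose proof rests on indistinguishability of infinite clusters, not just Burton--Keane) is exactly what you are missing. To repair your argument, either invoke Proposition~\ref{prop:BernoulliDelaunay}~(iv) directly after establishing unimodularity of $\mathcal D$, or supply an explicit proof of uniqueness monotonicity for the conditionally Bernoulli percolation on $\mathcal D$.
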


\subsection{Description of the continuum model}

We briefly recall the definition of Poisson--Voronoi percolation for the convenience of the reader and refer to \cite{GR25} for details. 

In this paper, we will only consider Poisson--Voronoi percolation on non-discrete spaces and may thus use the same notation as in the discrete case without risk of confusion. 

Let $(M,d,o,\mu)$ be a proper geodesic metric space with some fixed origin $o\in M$ and infinite Radon measure $\mu$. Let $\lambda>0$ and $p\in(0,1]$. Let
$$
\mathbf X^{(\lambda)}=\big\{(X_1^{(\lambda)},Y_1^{(\lambda)}),(X_2^{(\lambda)},Y_2^{(\lambda)}),\ldots\big\}
$$
be such that 
\begin{itemize}
\item[--] the sequence $X^{(\lambda)}\coloneqq \big\{X_1^{(\lambda)},X_2^{(\lambda)},\ldots\big\}$ is a Poisson point process on $M$ with intensity $\lambda \cdot \mu$ ordered according to increasing distance from the origin.
\item[--] the sequence $Y^{(\lambda)}\coloneqq \{Y_1^{(\lambda)},Y_2^{(\lambda)},\ldots\}$ consists of iid ${\rm Unif}[0,1]$-labels and is independent of $Y^{(\lambda)}$.
\end{itemize}
The definition of the associated {\bf Voronoi diagram} and (Voronoi) {\bf cells} is the same as before. The same independent black-and-white coloring procedure followed by retaining the black cells leads to the definition of a random closed subset of $M$ consisting of all points which belong to black cells. We refer to this continuum percolation model as $(\lambda,p)$-{\bf Poisson-Voronoi percolation} or simply as {\bf Poisson-Voronoi percolation} on $M$ and we refer to the path connected components as {\bf clusters}. 

The natural analogues of Proposition~\ref{prop:VoronoiBasics}, Theorem~\ref{thm:UniquenessLRO} and Proposition~\ref{prop:Phases} hold as shown in~\cite{GR25}.

\section{From local to global uniqueness in products} \label{sec:LocalToGlobal}

This section is devoted to two technical results, Theorem~\ref{thm:LocalToGlobal} and Theorem~\ref{thm:LocalToGlobalSymm}. In Section~\ref{sec:GraphProducts}, we show Theorem~\ref{thm:LocalToGlobal} which roughly speaking asserts that for Bernoulli--Voronoi percolation on non-amenable products of infinite Cayley graphs, having information about {\em local uniqueness} at sufficiently small intensities is sufficient to establish {\em long-range order} at sufficiently small intensities. This result also extends to products of more general infinite graphs, see Section~\ref{sec:BeyondCayley}. In Section~\ref{sec:SymmProducts}, we show Theorem~\ref{thm:LocalToGlobalSymm}, which is a matching result in the continuum setting for Poisson--Voronoi percolation on Riemannian products of symmetric spaces.

\medskip

{\bf\noindent Convention:} To shorten notation, we make the convention that all Cayley graphs in this section are Cayley graphs of finitely generated groups with respect to some finite generating sets.

\subsection{Non-amenable graph products} \label{sec:GraphProducts}

Let $G=(V,E,o)$ be a locally finite, connected, transitive, unimodular, rooted graph. Note that we used the root only to enumerate the Bernoulli process in the definition of Bernoulli--Voronoi percolation. For transitive graphs and the purposes of this subsection, the particular choice of root will thus not play a role and we therefore omit it from the notation throughout.

\begin{figure}[!hbtp]
    \hspace{5pt}
\includegraphics[width=0.75\linewidth]{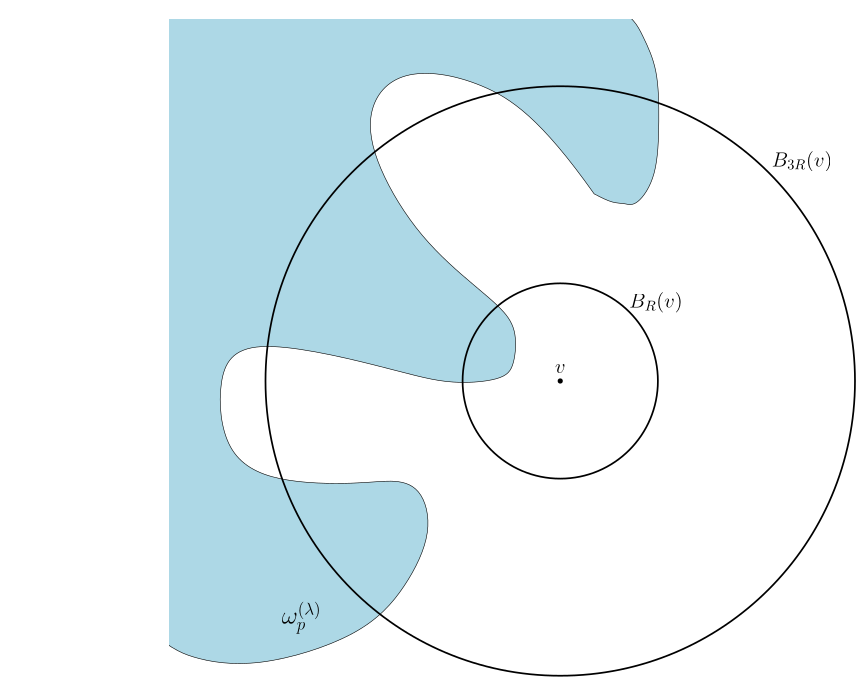}
    \caption{Portrait of a ``good'' event $A(\lambda,p,R,v)$.}
    \label{fig:alprv}
\end{figure}

We now formulate a suitable notion of ``good'' events for local uniqueness. For $\lambda\in(0,1]$, $p\in(0,1]$, $R\ge1$ and $v\in V$, let $A(\lambda,p,R,v)$ denote the event that $\omega_p^{(\lambda)}$ intersects $B_R(v)$ with at least one cluster and, moreover, $\omega_p^{(\lambda)}\cap B_{3R}(v)$ 
is contained in a single cluster of $\omega_p^{(\lambda)}$ (see Fig.~\ref{fig:alprv} for a portrait). The reason we require the cluster to be unique in a ball of radius $3R$ is that with this choice, if the ``good'' event $A(\lambda,p,R,\cdot)$ holds at $v$ and $v'\sim v$ simultaneously, then these events are witnessed by the same cluster.

We say that Bernoulli--Voronoi percolation on $G$ has {\bf local uniqueness at low intensities} if for every $p_0\in(0,1]$ and $\varepsilon>0$, there exist $R>0$ and $\lambda_0\in(0,1)$ such that
\begin{equation}\label{equ:localuniqueness}
\inf_{\lambda\le\lambda_0} \mathbb P\big( A(\lambda,p_0,R,o) \big)>1-\varepsilon.
\end{equation}
We are now in a position to state the main result of this subsection. Recall that $\{u\leftrightarrow v\}$ denotes the event that $u$ is connected to $v$ in the percolation subgraph. 

\begin{theorem} \label{thm:LocalToGlobal}
Let $X$ and $Y$ be infinite Cayley graphs. Suppose that $X$ is non-amenable and that Bernoulli--Voronoi percolation on $G=X\times Y$ has local uniqueness at low intensities. Then Bernoulli--Voronoi percolation on $G$ satisfies the following: for every $p_0\in(0,1]$, there exists $\lambda_0>0$ such that  
$$
\inf_{u,v\in V(X\times Y)} \mathbb P_{p_0}^{(\lambda)}( u\leftrightarrow v) >0
$$
for all $\lambda<\lambda_0$.
\end{theorem}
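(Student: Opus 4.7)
The plan is to upgrade local uniqueness to a unique infinite cluster of $\omega_{p_0}^{(\lambda)}$ of positive vertex density, and then derive long-range order from the FKG inequality (Proposition~\ref{prop:VoronoiBasics}(ii)). The non-amenability of $X$ enters through the non-amenability of the product $G=X\times Y$, needed to rule out multiple infinite clusters at high density.

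I would begin by fixing $p_0$ and a small $\varepsilon>0$ to be chosen later, and invoking local uniqueness to obtain $R\geq 1$ and $\lambda_0>0$ with $\mathbb{P}_{p_0}^{(\lambda)}(A(\lambda,p_0,R,v))\geq 1-\varepsilon$ uniformly over $v\in V(G)$ and $\lambda\leq\lambda_0$. On the event $A(v)$, let $\mathcal{C}^*_v$ be the unique cluster of $\omega_{p_0}^{(\lambda)}$ meeting $B_{3R}(v)$, and call $v$ \emph{good}. The key consistency observation is: if $v,v'$ are good and $d_G(v,v')\leq 2R$, then $B_R(v')\subset B_{3R}(v)$, the non-empty set $\omega_{p_0}^{(\lambda)}\cap B_R(v')$ lies inside both $\mathcal{C}^*_v$ and $\mathcal{C}^*_{v'}$, and hence $\mathcal{C}^*_v=\mathcal{C}^*_{v'}$. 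Letting $H$ be the power graph on $V(G)$ with edges $\{u,v\}$ whenever $d_G(u,v)\leq 2R$, every $H$-connected component of the good set $S=\{v:A(v)\}$ witnesses a common cluster of $\omega_{p_0}^{(\lambda)}$.

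To turn this into uniqueness, I would approximate $A(v)$ by a local version $\tilde A(v)$ depending only on $\mathbf{X}^{(\lambda)}$ and the coloring restricted to a finite ball $B_L(v)$; this is possible because cells are a.s.~finite (Proposition~\ref{prop:VoronoiBasics}(i)) and within-cluster paths have a.s.~finite length, so a sufficiently large $L=L(\lambda,R,p_0,\varepsilon)$ gives $\mathbb{P}(\tilde A(v))\geq 1-2\varepsilon$ while preserving the consistency above. The indicator process $\eta_v=\mathbf{1}_{\tilde A(v)}$ is then $\mathrm{Aut}(G)$-invariant and $(2L+1)$-dependent. Since $H$ is a non-amenable transitive graph (because so is $X\times Y$), the Bernoulli site percolation uniqueness threshold $p_u^{\mathrm{Ber}}(H)$ is strictly less than one by Schonmann-type arguments; fixing $q\in(p_u^{\mathrm{Ber}}(H),1)$, the Liggett--Schonmann--Stacey domination theorem shows that for $\varepsilon$ small enough (depending on $L$ and $q$), $\eta$ stochastically dominates Bernoulli$(q)$ site percolation on $V(G)$. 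A renormalized Peierls-type argument, combining the $(2L+1)$-dependence of $\eta$, its density $\geq 1-2\varepsilon$, and the non-amenability of $H$, then yields that $\eta$ has a unique infinite $H$-component of positive density; consistency lifts this to a unique infinite cluster $\mathcal{C}^*$ of $\omega_{p_0}^{(\lambda)}$ of positive density (every infinite cluster of $\omega_{p_0}^{(\lambda)}$ has, with high probability, a good vertex in the $R$-neighborhood of each of its points, hence belongs to a single infinite component of $S$).

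Writing $\theta=\mathbb{P}_{p_0}^{(\lambda)}(o\leftrightarrow\infty)>0$ (uniform in the base vertex by transitivity), FKG applied to the increasing events $\{u\leftrightarrow\infty\}$ and $\{v\leftrightarrow\infty\}$ together with uniqueness of the infinite cluster gives
$$\mathbb{P}_{p_0}^{(\lambda)}(u\leftrightarrow v)\;\geq\;\mathbb{P}(u\leftrightarrow\infty,\, v\leftrightarrow\infty)\;\geq\;\theta^2\;>\;0$$
uniformly in $u,v\in V(G)$, which is the desired long-range order. The hardest part of the plan is the uniqueness of the infinite $H$-component of $\eta$: stochastic domination alone is insufficient since uniqueness is not a monotone property, so it must be combined with the finite-range structure of $\eta$ and a Peierls-type estimate exploiting non-amenability. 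A further technical subtlety is that as $\lambda\to 0$ typical Voronoi cells grow, forcing the localization scale $L$ and hence the dependence parameter of $\eta$ to grow, while the domination threshold tends to one with this parameter; the parameters must be chosen in the right order ($q$ first, then $\varepsilon$ and $R$, then $L$ and $\lambda$) so that the required density condition can be preserved, which is where the non-amenable factor $X$ enters quantitatively.
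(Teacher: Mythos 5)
Your proposal takes a genuinely different route from the paper, and unfortunately has a gap that I believe is fatal.

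The paper's proof does not attempt to prove uniqueness of the infinite cluster of $\omega_{p_0}^{(\lambda)}$ directly. Instead it defines an auxiliary invariant site percolation $\xi$ on the factor $X$ (namely, $x\in\xi$ iff the local uniqueness events hold simultaneously at $(x,y_1)$ and $(x,y_2)$), applies the BLPS-type threshold Theorem~\ref{theorem:threshold} to conclude that $\xi$ has an infinite cluster with probability at least $2/3$, follows an infinite $\xi$-path to find two clusters of $\omega_{p_0}^{(\lambda)}$ coming within bounded Delaunay distance infinitely often, and then sprinkles to connect them. A second step uses the infinitude of $Y$ (via a lim\,sup argument over infinitely many intermediate levels $y$) plus another sprinkling and FKG. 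The crucial point is that Theorem~\ref{theorem:threshold} is stated for \emph{arbitrary} $\Gamma$-invariant site percolations: the density threshold $p^*$ depends only on the isoperimetric constant of $X$, not on any dependence range. This is exactly why the approach survives the $\lambda\to 0$ limit.

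Your approach, by contrast, passes through LSS domination, and this is where it breaks down. You correctly observe that $A(v)=A(\lambda,p_0,R,v)$ is not a finite-range event (whether $\omega_{p_0}^{(\lambda)}\cap B_{3R}(v)$ lies in a single cluster can be decided arbitrarily far from $v$), so you need to truncate to a local version $\tilde A(v)$ depending on $B_L(v)$. But the localization scale $L=L(\lambda,\ldots)$ necessarily diverges as $\lambda\to 0$, because cells grow and within-cluster paths lengthen. The LSS domination threshold deteriorates as the dependence range grows: for a $(2L+1)$-dependent field on a bounded-degree graph to dominate Bernoulli$(q)$ for a fixed $q$, its marginal density must exceed $1-\varepsilon(L)$ with $\varepsilon(L)\to 0$ (typically exponentially) as $L\to\infty$. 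But $\varepsilon$ in the local uniqueness hypothesis is what determines $R$ and $\lambda_0$, and $L$ depends on $\lambda<\lambda_0$. You cannot fix $\varepsilon$ first (it must shrink as $L$ grows) and you cannot fix $L$ first (it depends on $\lambda$, whose allowed range depends on $\varepsilon$). This circularity is not an ordering problem that can be resolved by picking parameters carefully; it is a structural obstruction to using any finite-dependence domination theorem here. The paper's approach sidesteps this entirely by never localizing the event and applying a threshold theorem that does not care about the dependence structure.

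Two further observations. First, your argument makes no use of the hypothesis that $Y$ is infinite; the paper uses this essentially in Step 2 (finding infinitely many intermediate $Y$-levels and applying $\mathbb P(\limsup A_n)\ge\limsup\mathbb P(A_n)$). A proof that did not use the infinitude of $Y$ would be surprising and merits scrutiny even absent the LSS issue. Second, your plan never sprinkles, whereas both steps of the paper's proof rely on sprinkling (raising $p_0$ to $p_0+\delta$) to actually close connections between clusters that are merely known to come close; proving uniqueness at $p_0$ directly (rather than long-range order at $p_0+\delta$ for all $\delta>0$) is significantly more demanding, and the "renormalized Peierls-type argument" you invoke to handle it, on a graph $H$ whose structure itself depends on $R$, is doing a lot of unexamined work.
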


The proof of this result is inspired by the proof of non-uniqueness at $p_u$ for Bernoulli percolation on non-amenable products in \cite{P00}. It will be important for the argument that both factors are infinite and that one of them is non-amenable. Non-amenability will be used in form of the following version of the celebrated quantitative characterization of non-amenability due to~\cite{BLPS99}.

\begin{theorem}[\textsc{Threshold for infinite clusters}] \label{theorem:threshold} Let $X$ be a Cayley graph of a non-amenable group $\Gamma$. Then there exists $p^*\in[0,1)$ such that every $\Gamma$-invariant site percolation $\omega$ on $X$ with $\mathbb P(o\in \omega)>p^*$ satisfies 
$$
\mathbb P( |C_\omega(o)|=\infty)\ge 2/3,
$$ 
where $C_\omega(o)$ denotes the $\omega$-cluster of the origin $o\in V(X)$.
\end{theorem}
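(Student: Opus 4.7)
The plan is to deduce this as a direct consequence of the mass-transport principle for the unimodular Cayley graph $X$, in the spirit of the classical Benjamini--Lyons--Peres--Schramm scheme~\cite{BLPS99}. Let $d$ denote the degree of $X$ and let $h>0$ denote its edge-isoperimetric constant (positive since $\Gamma$ is non-amenable). Write $F=F(\omega)$ for the (random) set of vertices in the finite clusters of $\omega$. The key observation is that a finite cluster $C$ of $\omega$ is a maximal $\omega$-connected component, so every edge of $X$ exiting $C$ must go to $X\setminus\omega$. Combined with the edge-isoperimetric inequality, this gives both $|\partial_E C|\ge h|C|$ and $\partial_E C\subseteq E(C,X\setminus\omega)$ for every finite cluster $C$.

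I would then apply the mass-transport principle to the diagonally $\Gamma$-invariant transport
$$
f(u,v)=\frac{|N^{\mathrm{out}}(u)|}{|C_\omega(u)|}\,\mathbf 1_{u\in F,\,v\in C_\omega(u)},
$$
where $N^{\mathrm{out}}(u)\coloneqq\{w\sim u:w\notin\omega\}$ and the quotient is declared to be $0$ whenever $u\notin F$. The mass sent out of $o$ simplifies to $|N^{\mathrm{out}}(o)|\cdot\mathbf 1_{o\in F}$, whose expectation is at most
$$
\mathbb E|N^{\mathrm{out}}(o)|=d\bigl(1-\mathbb P(o\in\omega)\bigr)
$$
by transitivity. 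The mass received at $o$ equals $|\partial_E C_\omega(o)|/|C_\omega(o)|\cdot\mathbf 1_{o\in F}$, by collecting the contributions $|N^{\mathrm{out}}(u)|$ over $u\in C_\omega(o)$ and using that every exiting edge is accounted for exactly once; its expectation is at least $h\mathbb P(o\in F)$ by the isoperimetric bound. Equating the two sides via mass transport yields the key estimate
$$
\mathbb P(o\in F)\le\frac{d}{h}\bigl(1-\mathbb P(o\in\omega)\bigr).
$$

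Finally, since $\{o\in\omega\}=\{o\in F\}\sqcup\{|C_\omega(o)|=\infty\}$ is a disjoint decomposition, the above gives
$$
\mathbb P(|C_\omega(o)|=\infty)\ge \mathbb P(o\in\omega)-\frac{d}{h}\bigl(1-\mathbb P(o\in\omega)\bigr),
$$
and setting
$$
p^*\coloneqq \frac{2/3+d/h}{1+d/h}\in[0,1)
$$
makes the right-hand side at least $2/3$ whenever $\mathbb P(o\in\omega)>p^*$.

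The main obstacle is mainly bookkeeping rather than conceptual: one must verify carefully that the transport kernel $f$ is diagonally $\Gamma$-invariant and jointly measurable (in particular, that the convention $f\equiv 0$ off $F$ is compatible with applying the mass-transport principle of unimodular Cayley graphs), and that the collapse $\sum_{u\in C}|N^{\mathrm{out}}(u)|=|\partial_E C|$ genuinely uses the maximality of clusters. Once these points are in place, the proof reduces to the two-line inequality above.
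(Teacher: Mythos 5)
Your proof is correct and takes essentially the same route as the paper's: a BLPS-style mass-transport argument combining the edge-isoperimetric inequality with the mass-transport principle. The transport kernel differs slightly --- you distribute each vertex's outgoing edge count $|N^{\mathrm{out}}(u)|$ uniformly within its finite cluster so that the root accumulates the cluster's isoperimetric ratio, whereas the paper sends unit mass from each finite-cluster vertex to the vertices of the cluster's boundary --- but both kernels yield the identical inequality $\mathbb P(|C_\omega(o)|=\infty)\ge\mathbb P(o\in\omega)\bigl(1+d/\iota_E(X)\bigr)-d/\iota_E(X)$ and hence the same choice of $p^*$.
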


\begin{proof} This follows from the proof of \cite[Theorem 2.12]{BLPS99}. We include a sketch of the argument for the convenience of the reader.

For $A\subset V(X)$, let $\partial_V A\coloneqq  \{v\in V(X)\setminus A: v\sim u \text{ for some } u\in A \}$ and $\partial_E A\coloneqq  \{e\in E(X): e=[u,v] \text{ for some } u\in A, v\in V(X)\setminus A \}$ denote the vertex boundary and the edge boundary of $A$. Recall that the edge-isoperimetric constant of $X$ is defined to be
$$
\iota_E(X)\coloneqq  \inf \bigg\{ \frac{|\partial_E K|}{|K|} : \emptyset \neq K \subset V(X) \text{ finite} \bigg\}, 
$$
which is strictly positive if and only if $\Gamma$ is non-amenable, see for example \cite{BLPS99}. To shorten notation, set $d\coloneqq {\rm deg}_X(o)$.

Let $\omega$ be the configuration of a $\Gamma$-invariant site percolation on $X$. Without loss of generality, assume $\mathbb P(C_\omega(o)=V(X))<1$. For $u,v\in V(X)$, we denote by $\eta(u,v)\coloneqq |\{x\in C_\omega(u): x\sim v\}|$ be the number of neighbors of $v$ in the $\omega$-cluster of~$u$. Note that  
$$
\sum_{v\in\partial_V C_\omega(u)} \eta(u,v) = |\partial_E C_\omega(u)|.
$$
Now consider the $\Gamma$-diagonally-invariant mass transport function
$$
f(u,v) \coloneqq  \mathbb E \bigg[ \frac{\mathbf 1\{|C_\omega(u)|<\infty,v\in\partial_V C_\omega(u)\} \eta(u,v)}{|\partial_E C_\omega(u)|} \bigg].
$$
Since the total amount of mass leaving $o$ is $\mathbb P[o\in \omega, |C_\omega(o)|<\infty]$ and the total amount of mass entering $o$ is bounded by $d(1-\mathbb P(o\in \omega))/\iota_E(X)$ (in fact, this is a strict inequality \cite{BLPS99}, but we shall not need this fact), the Mass Transport Principle implies that
$$
\mathbb P(|C_\omega(o)|=\infty) \ge \mathbb P(o\in\omega) \bigg(1+\frac{d}{\iota_E(X)}\bigg)-\frac{d}{\iota_E(X)}.
$$
Note that the right-hand side tends to one as $\mathbb P(o\in\omega)\to1$. The claim follows.
\end{proof}

\begin{proof}[Proof of Theorem~\ref{thm:LocalToGlobal}] Let $\omega_p^{(\lambda)}$ denote Bernoulli--Voronoi percolation with intensity $\lambda\in[0,1]$ and survival probability $p\in[0,1]$ on $X\times Y$. 

Fix $p_0\in(0,1)$. We will show that there exists $\lambda_0>0$ such that for all $\lambda<\lambda_0$ and every $\delta>0$
\begin{equation}\label{equ:LROsprinkled}
\inf \Big\{ \mathbb P_{p_0+\delta}^{(\lambda)} \big( (x_1,y_1) \leftrightarrow (x_2,y_2) \big) \, : \, x_1,x_2\in V(X), y_1,y_2\in V(Y) \Big\}>0,
\end{equation}
which implies the claim.

\medskip

{\em Step 1} (Pairs of vertices on different $Y$-levels). To prove \eqref{equ:LROsprinkled}, we first prove a uniform lower bound for points of the form $(o_X,y_1)$ and $(o_X,y_2)$, where $o_X$ is a fixed origin in $V(X)$, 
using a version of the ``shadowing method'' of \cite{PP00,P00}. This step will use non-amenability of $X$.

Let $p^*$ be as in Theorem \ref{theorem:threshold}. Since $p^*<1$, the triangle inequality and \eqref{equ:localuniqueness} imply that there exist $R>0$ and $\lambda_0>0$ such that for all $x\in V(X)$ and all $y,y'\in V(Y)$,
\begin{equation} \label{equ:aux perc marginal}
\inf_{\lambda\le\lambda_0} \mathbb P\big( A(\lambda,p_0,R,(x,y)) \cap A(\lambda,p_0,R,(x,y')) \big)>p^*.
\end{equation}
In particular, this holds with $y=y_1$ and $y'=y_2$ and the above lower bound does not depend on this choice. 

Fix $R$ and $\lambda_0$ as above and let $\lambda<\lambda_0$.
We define an auxiliary site percolation model $\xi$ on $X$ as follows: $x\in V(X)$ belongs to $\xi$ if and only if both $A(\lambda,p_0,R,(x,y_1))$ and $A(\lambda,p_0,R,(x,y_2))$ occur, i.e.~we require the local uniqueness event to hold for the copies of $x$ on both levels $X\times\{y_1\}$ and $X\times\{y_2\}$ simultaneously. By \eqref{equ:aux perc marginal} and Theorem \ref{theorem:threshold},
$$
\mathbb P( |C_\xi(o_X)|=\infty) \ge 2/3.
$$
Thus with probability at least $2/3$, there exists an infinite path $\rho\coloneqq  (\rho_1,\rho_2,\rho_3,\ldots)$ in $V(X)$ starting with $\rho_1=o_X$ such that the events $A(\lambda,p_0,R,z)$ occur for all points 
$z\in\{(\rho_i,y_1),(\rho_i,y_2) : i=1,2,\ldots\}$. 

In particular, the paths $\rho\times\{y_1\}$ and $\rho\times\{y_2\}$ in $X\times Y$ are such that within distance $R$ from each of their members is an $\omega_{p_0}^{(\lambda)}$-cluster, which moreover is unique within a ball of radius $3R$ centered at each path step.
But this implies that there are two $\omega_{p_0}^{(\lambda)}$-clusters, one in the $R$-neighborhood of $\rho\times\{y_1\}$ and one in the $R$-neighborhood of $\rho\times\{y_2\}$, such that there are infinitely many disjoint paths of length at most $d_Y(y_1,y_2)+2R$ between them in the Delaunay graph associated to the Bernoulli--Voronoi tessellation (here we used that every Voronoi cell is finite almost surely by Proposition \ref{prop:VoronoiBasics}~(i) to guarantee disjointness,
and that the distance between two cells in the Delaunay graph is bounded by their graph distance), 
and, moreover, these two clusters are the unique clusters seen from members of the respective path in the $3R$-neighborhood. This implies that for every $\lambda<\lambda_0$ and every $\delta>0$, we have that
$$
\mathbb P_{p_0+\delta}^{(\lambda)} \big( B_R((o_X,y_1)) \leftrightarrow B_R((o_X,y_2)) \big) \ge 2/3.
$$
By invariance and the fact that the above bound did not depend on $y_1,y_2$, we obtain that for every $\lambda<\lambda_0$ and every $\delta>0$
\begin{equation} \label{equ:TPF balls}
    \inf \Big\{ \mathbb P_{p_0+\delta}^{(\lambda)} \big( B_R((x,y)) \leftrightarrow B_R((x,y')) \big) \Big\} \ge 2/3,
\end{equation}
where the infimum runs over all $x\in V(X), y,y'\in V(Y)$.

\medskip

{\em Step 2} (General pairs of vertices). Now consider an arbitrary pair of vertices $(x_1,y_1)$ and $(x_2,y_2)$. This step will use the fact that $Y$ is infinite.

For every $y\in V(Y)$, consider the event 
$$
H(y,\lambda,\delta) \coloneqq  \Big\{ B_R((x_1,y_1)) \leftrightarrow B_R((x_1,y)), B_R((x_2,y_2)) \leftrightarrow B_R((x_2,y)) \text{ in } \omega_{p_0+\delta}^{(\lambda)} \Big\}.
$$ 
By \eqref{equ:TPF balls}, we have that $\mathbb P(H(y,\lambda,\delta))\ge1/3>0$ for every $y\in V(Y)$, $\lambda<\lambda_0$, $\delta>0$. Using that $\mathbb P(\limsup A_n) \ge \limsup \mathbb P(A_n)$ for any sequence of events $A_n$, we obtain that for every $\lambda<\lambda_0$ and $\delta>0$
$$
\mathbb P \big( H(y,\lambda,\delta) \text{ for infinitely many } y \big) \ge 1/3.
$$
Since there are only finitely many clusters of $\omega_{p_0+\delta}^{(\lambda)}$ intersecting a fixed ball, we conclude that on the event 
$$
H\coloneqq  \{ H(y,\lambda,\delta) \text{ for infinitely many } y \},
$$
there exist two $\omega_{p_0+\delta}^{(\lambda)}$-clusters, one intersecting $B_R((x_1,y_1))$ and the other one intersecting $B_R((x_2,y_2))$, which come infinitely often within bounded distance of one another. Thus another sprinkling (i.e., increasing the percolation parameter by another small $\delta>0$)
will connect them almost surely and, noting that the pair of points was arbitrary, we obtain that for every $\lambda<\lambda_0$ and every $\delta>0$
$$
\inf \Big\{ \mathbb P_{p_0+\delta}^{(\lambda)} \big( B_R((x,y)) \leftrightarrow B_R((x',y')) \big) \, : \, x,x'\in V(X), y,y'\in V(Y) \Big\} \ge 1/3.
$$
The FKG-inequality (Proposition~\ref{prop:VoronoiBasics}~(ii)) together with automorphism invariance now implies that for every $\lambda<\lambda_0$ and $\delta>0$
$$
\inf \Big\{ \mathbb P_{p_0+\delta}^{(\lambda)} \big( (x,y) \leftrightarrow (x',y') \big) \, : \, x,x'\in V(X), y,y'\in V(Y) \Big\} > 0.
$$
This shows \eqref{equ:LROsprinkled} and the proof of Theorem~\ref{thm:LocalToGlobal} is complete.
\end{proof}

\subsubsection{Beyond Cayley graphs} \label{sec:BeyondCayley} Theorem~\ref{thm:LocalToGlobal} may be extended to more general classes of graphs. We record these cases below for completeness, but have included the detailed proof above only for the most concrete case for our applications.

\medskip

{\bf (a).}
It extends to the case that $X$ and $Y$ are infinite, locally finite, connected, transitive, unimodular graphs, $X$ is non-amenable and Bernoulli--Voronoi percolation on $X\times Y$ has local uniqueness at low intensities. The proof goes through virtually unchanged given Proposition~\ref{prop:VoronoiBasics} and the straightforward analogue of Theorem~\ref{theorem:threshold} for ${\rm Aut}(X)$-invariant site percolations.

\medskip

{\bf (b).}
In the setting of (a) above, the assumption of transitivity may be relaxed to quasi-transitivity as long as we require local uniqueness at low intensities for a complete set of orbit representatives; using the Mass Transport Principle~\cite[Corollary 3.5]{BLPS99}, the modifications are straightforward.

\medskip

{\bf (c).}
In the setting of (a), we may remove the assumption of unimodularity as long as $X$ has a closed, transitive subgroup $\Gamma$ of automorphisms such that $\Gamma$ is a non-amenable group; this is because Bernoulli--Voronoi cells are still a.s.~finite in this case (see Lemma~\ref{lm:Finite2} below) and Theorem~\ref{theorem:threshold} holds in this case for $\Gamma$-invariant site percolations by~\cite[Theorem 2.3]{P00} with a similar proof using the tilted Mass Transport Principle~\cite[Lemma 2.4]{P00}. We emphasize that this argument requires the group to be non-amenable and not the graph. 

\begin{lemma} \label{lm:Finite2}
    Let $G=(V,E)$ be a locally finite, connected, transitive graph, and $\lambda>0$. Then all cells $C_i^{(\lambda)}$ are finite almost surely.
\end{lemma}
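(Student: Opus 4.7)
The plan is to show that for each fixed $X \in V$,
$$\mathbb{P}\bigl(X \in X^{(\lambda)} \text{ and the cell of } X \text{ is infinite}\bigr) = 0.$$
Summing over the countable set $V$, and using that each cell $C_i^{(\lambda)}$ is associated to a unique retained point, this yields the lemma.

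Fix $X \in V$ and write $C_X$ for its Voronoi cell (well-defined on $\{X \in X^{(\lambda)}\}$). Recall from Section~\ref{sec:model} that the tie-breaking rule ensures cells are geodesically connected: if $v \in C_X$, then the entire geodesic from $X$ to $v$ lies in $C_X$. Combined with local finiteness of $G$, $|C_X| = \infty$ implies that for every $r \ge 1$ there exists $v \in C_X$ with $d(v,X) = r$. Such a $v$ satisfies $d(v,X') \ge r$ for every other retained point $X'$, hence $B_{r-1}(v) \cap X^{(\lambda)} = \emptyset$ (note also $X \notin B_{r-1}(v)$ because $d(v,X) = r$). Denote by $A_r$ the event that $X \in X^{(\lambda)}$ and some $v \in S_r(X)$ satisfies $B_{r-1}(v) \cap X^{(\lambda)} = \emptyset$. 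We have just shown
$$\{X \in X^{(\lambda)},\ |C_X| = \infty\} \subseteq \{A_r \text{ holds i.o.}\}.$$

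The heart of the argument is to bound $\sum_r \mathbb{P}(A_r)$ and then invoke Borel--Cantelli. For any $v \in S_r(X)$, the vertex sets $\{X\}$ and $B_{r-1}(v)$ are disjoint (again because $d(v,X) = r$), so $\{X \in X^{(\lambda)}\}$ and $\{B_{r-1}(v) \cap X^{(\lambda)} = \emptyset\}$ are independent under the Bernoulli product measure. Writing $v_r := |B_r(X)|$ (which depends only on $r$ by transitivity), a union bound over $S_r(X)$ gives
$$\mathbb{P}(A_r) \le (v_r - v_{r-1})\, \lambda\, (1-\lambda)^{v_{r-1}}.$$
The main technical step is verifying summability. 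Using $|S_r| = v_r - v_{r-1} \le \Delta\, v_{r-1}$ (with $\Delta$ the degree of $G$) and noting that the positive integers $\{v_{r-1}\}_{r \ge 1}$ are pairwise distinct (every sphere $S_r$ is non-empty since $G$ is infinite and connected), we obtain
$$\sum_r \mathbb{P}(A_r) \le \Delta \lambda \sum_{n \ge 1} n(1-\lambda)^n < \infty$$
for every $\lambda > 0$. The argument uses only transitivity (to identify $v_{r-1}$ across vertices) and local finiteness; no form of unimodularity is required, matching the hypotheses of the lemma.
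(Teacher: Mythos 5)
Your proof is correct, and it takes a genuinely different route from the paper. Both arguments rely only on transitivity and local finiteness (no unimodularity), but the details diverge. The paper anchors at a fixed vertex $o$ (not necessarily a process point), bounds $\mathbb{P}\bigl(C^{(\lambda)}(o)\not\subseteq B_r(o)\bigr)$ via a packing argument that covers the sphere $S_r(o)$ by $f(5r/4)/f(r/8)$ balls of radius $r/8$, and then closes the estimate with a case split according to whether the ball volume $f(r/4)$ grows at least quadratically or not. You instead anchor at a process point $X$, observe from the geodesic property of cells that an infinite cell must contain a vertex at every distance $r$, pass to the event $A_r$ that $B_{r-1}(v)$ is vacant for some $v\in S_r(X)$, and invoke Borel--Cantelli. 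The key technical idea — that the volumes $v_{r-1}=|B_{r-1}(X)|$ form a strictly increasing sequence of positive integers, so that $\sum_r v_{r-1}(1-\lambda)^{v_{r-1}}$ is a subsum of the convergent series $\sum_n n(1-\lambda)^n$ — is a neat observation that completely sidesteps the paper's growth-rate dichotomy and yields summability uniformly, regardless of how fast the graph grows. Your proof is arguably cleaner for this reason; the paper's proof, by avoiding Borel--Cantelli, gives a slightly more explicit finite-$r$ bound on $\mathbb{P}\bigl(C^{(\lambda)}(o)\not\subseteq B_r(o)\bigr)$, but both are elementary and correct.
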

\begin{proof}
It suffices to show that $C^{(\lambda)}(o)$ is finite a.s. 

Let $\varepsilon>0$. Choose $s>0$ such that $\mathbb P(B_s(o)\cap X^{(\lambda)} \ne \varnothing)>1-\varepsilon$. Set $f(r):=|B_r(o)|$ for $r\ge1$. If $v\in V$ with $d(o,v)=r\ge 2s$, then $v\notin C^{(\lambda)}(o)$ if there exists $i\in\mathbb N$ and $z\in V$ such that $d(o,z)=r$ and $\{v, X_i^{(\lambda)}\}\subset B_{r/4}(z)$. For each $z\in V$, 
$$
\mathbb P( X^{(\lambda)} \cap B_{r/4}(z) = \emptyset) = (1-\lambda)^{f(r/4)}.
$$ 
Moreover, a standard packing argument implies that there is a set $Z\subset V$ of size at most $f(5r/4)/f(r/8)$ such that 
$$
Z\subseteq \{z \in V:d(z,v)=r\}\subseteq \bigcup _{z\in Z} B_{r/8}(z).
$$
By a union bound, we have that
$$
\mathbb{P}\Big(C^{(\lambda)}(o) \not \subseteq B_r(o) \Big) \le \varepsilon + \frac{f(5r/4)}{f(r/8)} (1-\lambda)^{f(r/4)}.
$$
We claim that we may bound the right-hand side by~$2\varepsilon$ by choosing a suitable $r$ as follows. Either there is a sequence $r_i\to \infty$ such that $f(r_i/4)\ge r_i^2$ and then
$$\frac{f(5r_i/4)}{f(r_i/8)} (1-\lambda)^{f(r_i/4)}\le d^{5r_i/4}(1-\lambda)^{r_i^2}\xrightarrow{i\to\infty} 0,$$
where $d$ denotes the maximal degree in $G$.
Otherwise, for all sufficiently large $r$,
$$\frac{f(5r/4)}{f(r/8)} (1-\lambda)^{f(r/4)}\le (5r/4)^2(1-\lambda)^{r}\xrightarrow{r\to\infty} 0.$$
Since $\varepsilon>0$ was arbitrary, the claim follows.
\end{proof}

We may also point out that the threshold mentioned above fails for locally finite, connected, transitive graphs that are either amenable or non-unimodular (thus non-amenable) with an amenable automorphism group; we refer the interested reader to \cite{BLPS99,P00,H20,TT25}.

\subsection{Products of symmetric spaces} \label{sec:SymmProducts}

Let $X$ be the symmetric space of a non-compact connected semisimple real Lie group $G$. Let $\omega_p^{(\lambda)}$  denote Poisson--Voronoi percolation with intensity $\lambda>0$ and survival probability $p\in(0,1]$ on $X$. We now introduce a slightly modified version of local uniqueness. Namely, for $\lambda>0$, $p\in(0,1]$, $R>0$, $S>0$ and $x\in X$, let $A(\lambda,p,R,S,x)$ denote the event that $\omega_p^{(\lambda)}$ intersects $B_R(x)$ with at least one cluster and, moreover, $\omega_p^{(\lambda)}\cap B_{R+S}(x)$ is contained in a single cluster of $\omega_p^{(\lambda)}$. 

We say that Poisson--Voronoi percolation on $X$ has {\bf local uniqueness at low intensities} if for every $p_0\in(0,1]$ and $\varepsilon>0$, there exists $R>0$ with the following property. For every $S>0$, there exists $\lambda_0>0$ such that
\begin{equation}\label{equ:localuniquenessSymmSpace}
\inf_{\lambda\le\lambda_0} \mathbb P\big( A(\lambda,p_0,R,S,o) \big)>1-\varepsilon.
\end{equation}
The choice of $S=2R$ in the discrete case was sufficient for our argument, but not necessary. In the cases where we establish local uniqueness at low intensities (cf.~Theorem~\ref{thm:localuniqueness}), our arguments establish the same condition with any $0<S<\infty$, possibly depending on $R$. In the continuum setting we use this additional freedom to prove the following local-to-global uniqueness criterion.

\begin{theorem} \label{thm:LocalToGlobalSymm}
Let $X$ and $Y$ be symmetric spaces of non-compact connected semisimple real Lie groups $G$ and $H$. Suppose that Poisson--Voronoi percolation on the Riemannian product $M\coloneqq X\times Y$ has local uniqueness at low intensities. Then Poisson--Voronoi percolation on $M$ satisfies the following property. For every $p_0\in(0,1]$, there exists $\lambda_0>0$ such that 
$$
\inf_{x,y\in M} \mathbb P_{p_0}^{(\lambda)}(x\leftrightarrow y) >0
$$
for all $\lambda<\lambda_0$.
\end{theorem}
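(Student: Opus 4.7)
The plan is to mirror Theorem~\ref{thm:LocalToGlobal}, substituting non-amenability of the factor graph by non-amenability of a co-compact lattice $\Gamma \subset G$. Fix $p_0 \in (0,1]$; the goal is to find $\lambda_0 > 0$ such that for every $\lambda < \lambda_0$ and every $\delta > 0$,
\[
\inf_{z, z' \in M} \mathbb{P}_{p_0 + \delta}^{(\lambda)}(z \leftrightarrow z') > 0,
\]
which by the continuum analogue of~\eqref{equ-UniquenessLRO} established in~\cite[Theorem~1.9]{GR25} implies the theorem. Since $G$ is a non-compact connected semisimple real Lie group, Borel's theorem supplies a co-compact lattice $\Gamma \subset G$, necessarily finitely generated and non-amenable. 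I fix a Cayley graph $\mathrm{Cay}(\Gamma)$, a basepoint $o_X \in X$, the maximal generator displacement $D := \max_s d_X(o_X, s \cdot o_X)$, and the non-amenability threshold $p^* \in [0,1)$ for $\mathrm{Cay}(\Gamma)$ from Theorem~\ref{theorem:threshold}.

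Applying local uniqueness at low intensities with $\varepsilon < (1-p^*)/2$ produces $R > 0$; the continuum definition's freedom in specifying $S$ then lets me set $S := R + D$ and extract $\lambda_0 > 0$ such that $\mathbb{P}(A(\lambda, p_0, R, S, z)) > 1 - \varepsilon$ for every $\lambda < \lambda_0$ and $z \in M$ (by $G \times H$-invariance). Crucially, $\lambda_0$ does not depend on the pair $y_1, y_2 \in Y$ to be fixed next. For such $y_1, y_2$, I introduce the $\Gamma$-invariant site percolation $\xi$ on $\mathrm{Cay}(\Gamma)$ by $\gamma \in \xi$ iff both $A(\lambda, p_0, R, S, (\gamma o_X, y_i))$ for $i = 1, 2$ occur; its marginal exceeds $1 - 2\varepsilon > p^*$, so Theorem~\ref{theorem:threshold} (whose proof via the Mass Transport Principle applies to any $\Gamma$-invariant site percolation) yields an infinite self-avoiding $\xi$-path $\rho$ from the identity with probability at least $2/3$. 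The choice $S \ge R + D$ forces the $\omega_{p_0}^{(\lambda)}$-clusters witnessing consecutive good events at a fixed level to coincide, producing two clusters $\mathcal{C}_1, \mathcal{C}_2$ that shadow $\rho \times \{y_1\}$ and $\rho \times \{y_2\}$. To merge them, I take for each $j$ a geodesic (of length $\le 2R + d_Y(y_1, y_2)$) between witness points of $\mathcal{C}_1$ and $\mathcal{C}_2$ near $\rho_j$; its Voronoi cells form a Delaunay path whose $\delta$-sprinkling is entirely black with the same positive probability $q = q(\lambda, p_0, \delta, d_Y)$ for every $j$, by $G$-invariance. Extracting a sub-sequence $\rho_{j_k}$ inductively so that the (a.s.~finite) Voronoi neighborhood of $\rho_{j_{k+1}}$ is disjoint from those of $\rho_{j_1}, \ldots, \rho_{j_k}$ renders the associated sprinkling events independent, and Borel-Cantelli delivers almost-sure merging. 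This yields $\mathbb{P}_{p_0 + \delta}^{(\lambda)}(B_R((x, y)) \leftrightarrow B_R((x, y'))) \ge 2/3$ uniformly in $(x, y, y')$. Arbitrary pairs $(x_1, y_1), (x_2, y_2)$ are then handled by the Borel-Cantelli argument of Step~2 of Theorem~\ref{thm:LocalToGlobal} combined with FKG.

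The main obstacle is the sprinkling-to-merge step: unlike in the discrete setting, Voronoi cells along a geodesic in $M$ have random, a priori unbounded sizes, so the clean ``infinitely many disjoint bounded Delaunay paths'' estimate of Theorem~\ref{thm:LocalToGlobal} is unavailable and must be replaced by the inductive sub-sequence extraction described above. Two secondary points to verify are the existence of a co-compact lattice in any non-compact connected semisimple real Lie group (Borel's theorem) and the applicability of Theorem~\ref{theorem:threshold} to $\Gamma$-invariant (not necessarily insertion-tolerant) percolations of the form $\xi$ — the latter holds because the proof only uses invariance and the Mass Transport Principle.
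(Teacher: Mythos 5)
Your overall architecture matches the paper: co-compact lattice $\Gamma\subset G$ (the paper packages the needed facts in Lemma~\ref{lm:lattice}), auxiliary $\Gamma$-invariant site percolation $\xi$ on ${\rm Cay}(\Gamma)$, Theorem~\ref{theorem:threshold}, shadowing clusters along an infinite $\xi$-path, sprinkling to merge, and a second step for general pairs followed by FKG. Your observation that Theorem~\ref{theorem:threshold} applies to any $\Gamma$-invariant site percolation (not just insertion-tolerant ones) is correct, and your choice $S=R+D$ is adequate for forcing consecutive witness clusters to coincide (the paper's $S=2R+D$ is more than needed).

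However, there is a genuine gap in the sprinkling-to-merge step, and you have already half-noticed it. You assert that the $\delta$-sprinkling renders the Delaunay path along each geodesic ``entirely black with the same positive probability $q = q(\lambda, p_0, \delta, d_Y)$ for every $j$, by $G$-invariance.'' This is not justified. The geodesics $\gamma_j$ between witness points are random (their endpoints depend on the tessellation and the colouring), and the events ``$\rho_j\in\xi$ for all $j$'' condition the underlying Poisson process. The quantity that actually matters for your Borel--Cantelli argument is the \emph{conditional} probability, given the tessellation and the $p_0$-colouring, that the $N_j$ white cells along $\gamma_j$ all flip to black in the sprinkling; this equals $(\delta/(1-p_0))^{N_j}$, where $N_j$ is random and a priori unbounded. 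Disjointness of the Voronoi neighbourhoods gives conditional independence, but Borel--Cantelli then requires $\sum_k (\delta/(1-p_0))^{N_{j_k}}=\infty$ almost surely, which does \emph{not} follow from the positivity of the unconditional probability of a single such event. $G$-invariance controls the unconditional marginal distribution of each bridge, not the (conditional, dependent) probabilities entering Borel--Cantelli.

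The paper closes this gap by enlarging the good event used to define $\xi$: in addition to the two local-uniqueness events, one requires the event $A(\lambda,p_0,R,(x,y_1),(x,y_2),N)$ that \emph{all} clusters touching $B_R((x,y_1))$ and $B_R((x,y_2))$ are at Delaunay distance at most $N$. This event has probability close to one (uniformly in small $\lambda$) by an argument along the lines of~\cite[Proposition~4.16]{GR25}. Along the $\xi$-path one then has infinitely many bridges of Delaunay length $\le N$ between the two shadowing clusters, involving eventually disjoint cells by compactness (\cite[Lemma~2.6]{GR25}), and the sprinkling probability per bridge has the uniform lower bound $(\delta/(1-p_0))^{N-1}$, so Borel--Cantelli applies cleanly. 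The same issue and the same fix also appear in Step~2 of the paper's proof (the events $H(y,\lambda,\delta,N)$ with a fixed $N_0$); your sketch of Step~2 as a direct transcription of the discrete argument silently relies on bounded distance implying bounded Delaunay distance, which fails in the continuum. You would need to incorporate the uniform Delaunay distance bound there as well.
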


The proof of Theorem~\ref{thm:LocalToGlobalSymm} follows the main ideas of the proof of Theorem~\ref{thm:LocalToGlobal} with modifications for the non-discrete setting. We start with a preparation.

\begin{lemma}\label{lm:lattice}
Let $X$ be the symmetric space of a non-compact connected semisimple real Lie group $G$. Fix $o\in X$. Then there exists a Cayley graph ${\rm Cay}(\Gamma)=(\Gamma,E)$ of a finitely generated subgroup $\Gamma\subset G$ and constants $C,D\in(0,\infty)$ such that the following hold:
\begin{itemize}
\item[{\rm(i)}] $\Gamma$ is non-amenable,
\item[{\rm(ii)}] $\sup_{x\in X} \inf_{g\in \Gamma} d_X(go,x) \le C$,
\item[{\rm(iii)}] $\sup \big\{ d_X(go,ho) : g,h\in\Gamma, [g,h]\in E \big\} \le D$.
\end{itemize}
\end{lemma}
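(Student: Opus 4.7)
The plan is to take $\Gamma$ to be a cocompact (uniform) lattice in $G$ equipped with a generating set built from a fundamental domain. Existence of such a $\Gamma$ is Borel's classical theorem on cocompact lattices in connected semisimple real Lie groups. All three conditions will then follow from standard facts about such lattices.

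For (i), non-amenability of $\Gamma$ is a consequence of non-amenability of $G$ (any non-compact connected semisimple real Lie group is non-amenable) together with the well-known preservation result: a cocompact lattice in a non-amenable locally compact group is non-amenable; see e.g.~\cite[Prop.~G.2.2]{BDLHV}. For (ii), I will identify $X = G/K$ with $K = {\rm Stab}_G(o)$ compact. Cocompactness of $\Gamma\backslash G$ yields cocompactness of $\Gamma\backslash X$, so there exists a compact set $F \subset X$ containing $o$ with $\Gamma F = X$. Setting $C := {\rm diam}_X(F)$, for any $x \in X$ I pick $\gamma \in \Gamma$ with $\gamma^{-1}x \in F$, whence
$$d_X(\gamma o, x) = d_X(o, \gamma^{-1}x) \le {\rm diam}_X(F) = C.$$

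For (iii), the natural finite generating set is
$$S := \{ s \in \Gamma \setminus \{e\} : sF \cap F \neq \varnothing \}.$$
Discreteness of $\Gamma$ in $G$ combined with compactness of $F$ forces $S$ to be finite, and a standard covering argument (traveling along a geodesic in $X$ from $o$ to $\gamma o$ through the $\Gamma$-translates of $F$ it meets) shows $S$ generates $\Gamma$. For each $s \in S$ there exists $x \in F$ with $sx \in F$, so
$$d_X(o, so) \le d_X(o, sx) + d_X(sx, so) \le {\rm diam}_X(F) + d_X(x,o) \le 2\,{\rm diam}_X(F),$$
and $D := 2\,{\rm diam}_X(F)$ works: every Cayley edge $[g,h]$ has $g^{-1}h \in S$, hence $d_X(go, ho) = d_X(o, g^{-1}h \, o) \le D$.

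None of the steps is expected to be a serious obstacle. The only non-routine ingredient is Borel's deep existence theorem for cocompact lattices, which is invoked as a black box; the remaining transfer of non-amenability and the fundamental-domain construction of generators are textbook facts in the theory of lattices in semisimple Lie groups.
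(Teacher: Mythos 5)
Your proof is correct and takes essentially the same approach as the paper: both start from Borel's existence theorem for a cocompact lattice $\Gamma\subset G$, transfer non-amenability from $G$ to $\Gamma$, and read off (ii)--(iii) from cocompactness. The only differences are cosmetic: for (ii) you use a compact fundamental domain directly, whereas the paper invokes a Voronoi-cell bound from~\cite{GR25}; and for (iii) you construct an explicit Dirichlet-type generating set $S$ from the fundamental domain (which is the content of the \v{S}varc--Milnor argument the paper cites for finite generation), whereas the paper takes an arbitrary finite generating set and defines $D$ accordingly. One small thing worth saying explicitly: the lemma requires $D>0$, and while in your construction $D=2\,{\rm diam}_X(F)>0$ is clear since $F$ must have positive diameter ($X$ being non-discrete), the paper does verify $D>0$ separately and you should too.
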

\begin{proof}
It is well-known that there exists a co-compact lattice $\Gamma\subset G$, see~\cite{BHC}. By the \v{S}varc--Milnor lemma, $\Gamma$ is finitely generated. Let ${\rm Cay}(\Gamma)=(\Gamma,E)$ be a Cayley graph of~$\Gamma$; we now show that it has the desired properties.

(i).~It is well-known that $G$ is non-amenable (see e.g.~\cite[Theorem 2.2~(i)]{GR25}) and hence every lattice in $G$ is non-amenable, see e.g.~\cite[Corollary G.3.8]{BDLHV}.

(ii).~The set $\{go: g\in\Gamma\}\subset X$ induces a Voronoi diagram $\{V_g\}_{g\in \Gamma}$ of $X$, where $V_g$ consists of all $x\in X$ for which $d_X(x,\gamma o)$ is minimized among $\gamma\in G$ by~$g$. Since $\Gamma$ is co-compact in $G$, \cite[Lemma 9.3~(2)]{GR25} shows that there exists $C>0$ such that $V_g\subset B_C(go)$ for every $g\in\Gamma$.

(iii).~Denote the finite generating set associated to ${\rm Cay}(\Gamma)$ by $S=\{s_1,\ldots,s_n\}$. We claim that 
$$
D\coloneqq \sup \{d_X(o,s_io) : i=1,\ldots,n\}
$$ 
works as required. Clearly $D<\infty$ and $D>0$ follows from the fact that $\Gamma$ is non-compact and \cite[Lemma 9.3~(1)]{GR25}. Finally, if $g,h\in \Gamma$ with $[g,h]\in E$, then $h=gs_i$ or $g=hs_i$ for some $i\in\{1,\ldots,n\}$. Since $G$ acts on $X$ by isometries (see e.g.~\cite[Theorem 2.2~(i)]{GR25}), in either case $d_X(go,ho)=d_X(o,s_io)\le D$.
\end{proof}

\begin{proof}[Proof of Theorem~\ref{thm:LocalToGlobalSymm}] Let $\omega_p^{(\lambda)}$ denote Poisson--Voronoi percolation with intensity $\lambda>0$ and survival probability $p\in(0,1]$ on $M=X\times Y$. Let $\Gamma\subset G$, ${\rm Cay}(\Gamma)$ and $C,D\in(0,\infty)$ be as in Lemma~\ref{lm:lattice}.

Fix $p_0\in(0,1]$. We will again show that there exists $\lambda_0>0$ such that
\begin{equation}\label{equ:LROsprinkledSymm}
\inf \Big\{ \mathbb P_{p_0+\delta}^{(\lambda)} \big( (x_1,y_1) \leftrightarrow (x_2,y_2) \big) \, : \, x_1,x_2\in X, y_1,y_2\in Y \Big\}>0,
\end{equation}
for every $\lambda<\lambda_0$ and $\delta>0$.

{\em Step 1} (Pairs of points on different $Y$-levels). Let $p^*$ be the threshold guaranteed by Theorem~\ref{theorem:threshold} for ${\rm Cay}(\Gamma)$. Since $p^*<1$, the triangle inequality and \eqref{equ:localuniquenessSymmSpace} imply that there exists $R>0$ and $\lambda_1>0$ such that for all $x\in X$ and $y,y'\in Y$,
$$
\inf_{\lambda\le\lambda_1} \mathbb P\big( A(\lambda,p_0,R,2R+D,(x,y)) \cap A(\lambda,p_0,R,2R+D,(x,y')) \big)>p^*.
$$
Note the choice of $S=2R+D$ in the local uniqueness condition here.

Fix an origin $o_X\in X$, fix $y_1,y_2\in Y$ and fix $R$ as above. For $\lambda>0$, $x\in X$, $y_1,y'\in Y$ and $N\ge1$, let 
$$
A(\lambda,p_0,R,(x,y),(x,y'),N)
$$
denote the event that all $\omega_{p_0}^{(\lambda)}$-clusters intersecting $B_R((x,y))$ and $B_R((x,y'))$ are at Delaunay distance at most $N$. An argument similar to the proof of \cite[Proposition~4.16]{GR25} shows that for every $\delta>0$, there exist $N\ge1$ and $\lambda_2>0$ such that
$$
\inf_{\lambda\le\lambda_2} \mathbb P (A(\lambda,p_0,R,(o_X,y_1),(o_X,y_2),N)) \ge 1-\delta
$$ 
(namely because the event is implied by all Voronoi cells intersecting the two balls being at uniformly bounded Delaunay distance). 
Combined with invariance it follows that there exist $N\ge1$ and $\lambda_0>0$ such that
\begin{align} 
&\inf_{\lambda\le\lambda_0} \mathbb P\big( A(\lambda,p_0,R,2R+D,(x,y_1)) \cap A(\lambda,p_0,R,2R+D,(x,y_2)) \nonumber \\
& \qquad \qquad \qquad \cap A(\lambda,p_0,R,(x,y_1),(x,y_2),N) \big)>p^*, \label{equ:aux perc marginals Symm}
\end{align}
for all $x\in X$.


Fix $\lambda_0$ and $N$ as above and let $\lambda<\lambda_0$. We define an auxiliary site percolation model $\xi$ on ${\rm Cay}(\Gamma)=(\Gamma,E)$ as follows: $g\in \Gamma$ belongs to $\xi$ if and only if the events $A(\lambda,p_0,R,2R+D,(go_X,y_1))$, $A(\lambda,p_0,R,2R+D,(go_X,y_2))$ and $A(\lambda,p_0,R,(go_X,y_1),(go_X,y_2),N)$ occur . Let $e\in\Gamma$ denote the identity. By \eqref{equ:aux perc marginals Symm} and Theorem~\ref{theorem:threshold}, we have that $\mathbb P(|C_\xi(e)|=\infty)\ge 2/3$. Thus with probability at least $2/3$, there exists an infinite path $\rho\coloneqq (g_1,g_2,g_3,\ldots)$ in ${\rm Cay}(\Gamma)$ starting at $g_1=e$ and such that the events $A(\lambda,p_0,R,2R+D,z)$ occur for all points $z\in \{ (g_io_X,y_1), (g_io_X,y_2) : i=1,2,\ldots\}$ and also the event $A(\lambda,p_0,R,z,z',N)$ occurs whenever $z= (g_io_X,y_1)$ and $z'=(g_io_X,y_2)$. In particular, the discrete paths 
\begin{align*}
& \rho \times y_1 \coloneqq  \big( (o_X,y_1), (g_2o_X,y_1), (g_3o_X,y_1),\ldots\big), \\
& \rho \times y_2 \coloneqq \big( (o_X,y_2), (g_2o_X,y_2), (g_3o_X,y_2),\ldots\big)
\end{align*}
in $X\times Y$ are such that within distance $R$ from each of their members is an $\omega_{p_0}^{(\lambda)}$-cluster, which moreover is unique within a ball of radius $3R+D$ centered at each path step. Since the distance between two consecutive path steps on the same $Y$-level is at most $D$, this implies that there are $\omega_{p_0}^{(\lambda)}$-clusters, one in the $R$-neighborhood of $\rho\times\{y_1\}$ and one in the $R$-neighborhood of $\rho\times\{y_2\}$, which are the unique clusters seen from members of the respective paths in the $3R+D$ neighborhood. It also follows from the definition and the fact that Voronoi cells are compact a.s.~(see \cite[Lemma 2.6]{GR25}) that these two clusters are at Delaunay distance at most~$N$ for infinitely many cells. Thus for every $\delta>0$
$$
\mathbb P_{p_0+\delta}^{(\lambda)} \big( B_R((o_X,y_1)) \leftrightarrow B_R((o_X,y_2)) \big) \ge 2/3.
$$
By invariance and the fact that the above bound did not depend on $y_1$ and $y_2$, we obtain that
\begin{equation} \label{equ:TPF balls Symm}
    \inf \Big\{ \mathbb P_{p_0+\delta}^{(\lambda)} \big( B_R((x,y)) \leftrightarrow B_R((x,y')) \big) : x\in \Gamma o_X, y,y'\in Y \Big\} \ge 2/3,
\end{equation}
which concludes the proof of Step 1.

{\em Step 2.} (General pairs). Fix $0<\lambda< \lambda_0$. Let $x_1,x_2\in \Gamma o_X$ and $y_1,y_2\in Y$. Choose an unbounded subset 
$$
Y_0\coloneqq \{y^{(1)},y^{(2)},\ldots\}\subset Y.
$$
For $y\in Y$ and $N\ge1$, let 
$$
H(y,\lambda,\delta,N)
$$ 
denote the event that in $\omega_{p_0+\delta}^{(\lambda)}$ we have that $B_R((x_1,y_1)) \leftrightarrow B_R((x_1,y))$ and $B_R((x_2,y_2)) \leftrightarrow B_R((x_2,y))$, and moreover the Delaunay distance between every pair of cells such that one touches $B_R((x_1,y))$ and one touches $B_R((x_2,y))$ is at most~$N$. 

By \eqref{equ:TPF balls Symm}, the fact that there are a.s.~only finitely many $\omega_{p_0+\delta}^{(\lambda)}$-clusters intersecting any fixed ball (see \cite[Lemma 2.3]{GR25}) and a.s.~local finiteness of the Delaunay graph (see \cite[Lemma 6.5]{GR25}), we obtain that
$$
\liminf_{N\to\infty} \mathbb P(H(y,\lambda,\delta,N))\ge 1/3.
$$
Choose $N_0\ge1$ such that $\mathbb P(H(y,\lambda,\delta,N_0))\ge1/4$. Then for every $\delta>0$
$$
\mathbb P \big( H(y^{(i)},\lambda,\delta,N_0) \text{ for infinitely many } y^{(i)} \big) \ge 1/4.
$$
Using again that there are a.s.~only finitely many $\omega_{p_0+\delta}^{(\lambda)}$-clusters intersecting any fixed ball and a.s.~compactness of Poisson--Voronoi cells, we conclude that on the event
$$
H\coloneqq  \big\{  H(y^{(i)},\lambda,\delta,N_0) \text{ for infinitely many } y^{(i)} \big\},
$$
there exist two $\omega_{p_0+\delta}^{(\lambda)}$ clusters, one intersecting $B_R((x_1,y_1))$ and one intersecting $B_R((x_2,y_2))$, whose Delaunay distance is at most $N_0$ for infinitely many cells. Thus for every $\delta>0$ 
$$
\inf \Big\{ \mathbb P_{p_0+\delta}^{(\lambda)} \big( B_R((x,y)) \leftrightarrow B_R((x',y')) \big) \, : \, x,x'\in \Gamma o_X, y,y'\in Y \Big\} \ge 1/4.
$$
Since $\Gamma o_X \times Y\subset M$ is $C$-dense, the FKG-inequality (cf.~\cite[Lemma 2.5]{GR25}) together with invariance imply \eqref{equ:LROsprinkledSymm}. The proof of Theorem~\ref{thm:LocalToGlobalSymm} is thus complete.
\end{proof}

\subsubsection{Poisson--Voronoi percolation on general products}

Theorem~\ref{thm:LocalToGlobalSymm} suffices for all concrete applications to continuum percolation in this paper. We may also point out the following broad generalization, which follows immediately from the above arguments and which might be of independent interest: Consider the following assumptions.
\begin{itemize}
\item $X$ and $Y$ are non-compact proper geodesic lcsc metric spaces equipped with infinite Radon measures $\mu_X$ and $\mu_Y$ respectively;
\item $G$ and $H$ are lcsc groups acting properly, continuously, transitively and by measure-preserving isometries on $X$ and $Y$ respectively. Moreover, $G$ is non-amenable and admits a co-compact lattice;
\item The direct product $M\coloneqq X\times Y$ is equipped with the measure $\mu_X\otimes\mu_Y$ and $L^p$-distance $d_M((x,y),(x',y'))\coloneqq (d_X(x,x')^p+d_Y(y,y')^p)^{1/p}$ for some $p\ge1$;
\item Poisson--Voronoi percolation on $M$ has local uniqueness at low intensities in the sense of \eqref{equ:localuniquenessSymmSpace} and a.s.~has compact cells each of which has finitely many neighbors.\footnote{The fact that every ball intersects at most finitely many cells, also used in the proof, follows from the assumptions.}
\end{itemize}

\begin{proposition}
Under the above assumptions, Poisson-Voronoi percolation on $M$ satisfies that for every $p_0\in(0,1]$, there exists $\lambda_0>0$ such that 
$$
\inf_{x,y\in M} \mathbb P_{p_0}^{(\lambda)}(x\leftrightarrow y) >0
$$
for all $\lambda<\lambda_0$.
\end{proposition}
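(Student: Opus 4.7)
The plan is to mimic the two-step shadowing argument in the proof of Theorem~\ref{thm:LocalToGlobalSymm} essentially verbatim, checking at each point that the bullet-point assumptions suffice to replace what was previously supplied by the symmetric-space structure. The overall strategy is to extract from $G$ a discrete non-amenable structure that shadows $X$, use it to build infinite paths along which local uniqueness holds on two prescribed $Y$-levels simultaneously, and then sprinkle twice: once to merge the witnessing clusters on the two levels, and once to link arbitrary pairs of points by exploiting non-compactness of $Y$.

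First I would adapt Lemma~\ref{lm:lattice} to the present setting. By hypothesis, $G$ is non-amenable and admits a co-compact lattice $\Gamma$; the \v{S}varc--Milnor lemma, applied to the proper, continuous, isometric, transitive $G$-action on $X$, then shows that $\Gamma$ is finitely generated and that $\Gamma o_X$ is $C$-dense in $X$ for some $C<\infty$, and one may pick a finite generating set whose elements move $o_X$ by at most some $D<\infty$. Non-amenability of $\Gamma$ follows from non-amenability of $G$ by the standard fact that a lcsc group is amenable iff any (equivalently, every) lattice in it is amenable. With the Cayley graph $\mathrm{Cay}(\Gamma)$ in hand, Theorem~\ref{theorem:threshold} supplies a threshold $p^{*}<1$ above which every $\Gamma$-invariant site percolation has an infinite cluster containing the identity with probability at least $2/3$.

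For Step 1, fix $y_{1},y_{2}\in Y$ and define an auxiliary $\Gamma$-invariant site percolation $\xi$ on $\mathrm{Cay}(\Gamma)$ by declaring $g\in\xi$ iff the local uniqueness events $A(\lambda,p_{0},R,2R+D,(go_X,y_{j}))$ both hold and, in addition, every pair of clusters touching $B_{R}((go_X,y_{1}))$ and $B_{R}((go_X,y_{2}))$ has Delaunay distance at most $N$. The assumed local uniqueness property \eqref{equ:localuniquenessSymmSpace}, together with local finiteness of the Delaunay graph (which is implied by the a.s.~compactness of cells and the assumption that every cell has finitely many neighbors), lets one push $\mathbb{P}(g\in\xi)$ above $p^{*}$ by choosing $\lambda$ sufficiently small, just as in \eqref{equ:aux perc marginals Symm}. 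The $L^{p}$-product structure is used precisely here: along a $\mathrm{Cay}(\Gamma)$-path the distances $d_{M}((g_{i}o_X,y),(g_{i+1}o_X,y))=d_{X}(g_{i}o_X,g_{i+1}o_X)\le D$ and $d_{M}((x,y_{1}),(x,y_{2}))=d_{Y}(y_{1},y_{2})$ do not see any cross-term contribution from the other factor. A first sprinkling then merges the two witnessing clusters shadowing the $\xi$-path on the two levels, yielding a uniform ball-to-ball lower bound across distinct $Y$-levels for all $x\in\Gamma o_X$.

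For Step 2, pick an unbounded sequence $\{y^{(i)}\}\subset Y$, available by non-compactness of $Y$, and repeat the reverse-Fatou argument of the proof of Theorem~\ref{thm:LocalToGlobalSymm}: a.s.~compactness of cells and local finiteness of the Delaunay graph guarantee that, for any fixed pair $(x_{1},y_{1}),(x_{2},y_{2})\in\Gamma o_X\times Y$, infinitely often the clusters touching $B_{R}((x_{j},y^{(i)}))$ for $j=1,2$ come within bounded Delaunay distance of one another; a second sprinkling, FKG, and $C$-density of $\Gamma o_X\times Y$ in $M$ together with invariance under the transitive action of $G\times H$ then upgrade a ball-to-ball lower bound to the required point-to-point bound. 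The main obstacle I expect is purely bookkeeping: to check that each appeal the symmetric-space proof makes to an auxiliary fact (a.s.~compactness of cells, local finiteness of the Delaunay graph, finiteness of clusters meeting a ball, validity of FKG for Poisson--Voronoi, the quantitative Delaunay-distance statement in the spirit of \cite[Proposition~4.16]{GR25}, and invariance under a transitive isometry group) is subsumed either by the explicit bullet-point assumptions or by standard Poisson arguments on a proper lcsc metric measure space. Once this verification is done, no further structural input is needed, and the proof follows exactly that of Theorem~\ref{thm:LocalToGlobalSymm}.
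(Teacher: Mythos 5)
Your proposal is correct and follows exactly the route the paper takes: the paper's proof of this proposition simply consists of the one-line remark that it is identical to the proof of Theorem~\ref{thm:LocalToGlobalSymm}, with the bullet-point assumptions replacing the Lie-theoretic input (the role you have spelled out for Lemma~\ref{lm:lattice}, Theorem~\ref{theorem:threshold}, a.s.~compactness of cells, local finiteness of the Delaunay graph, FKG, and $C$-density). Your explicit verification that each appeal to symmetric-space structure is subsumed by the stated hypotheses is a faithful unfolding of what the paper leaves implicit.
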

\begin{proof} The proof is the same as that of Theorem~\ref{thm:LocalToGlobalSymm}.
\end{proof}

\section{Local uniqueness for products of trees}\label{sec:Trees}

\newcommand{\dist}{\operatorname{dist}}

Throughout this section, fix $d\ge 3$, $k\ge 2$, $G\coloneqq (\mathbb{T}_d)^k$ as the $k$-fold product of $\mathbb{T}_d$, and ${\bf o}=(o,\dots,o)\in V(G)$ a fixed root.

For $\lambda\in(0,1]$, $p\in(0,1]$, $R\ge1$ and $x\in V(G)$, recall that we denote by $A(\lambda,p,R,x)$ the event that $\omega_p^{(\lambda)}$ intersects $B_R(x)$ with at least one cluster and $\omega_p^{(\lambda)}\cap B_{3R}(x)$ is contained in a single cluster of $\omega_p^{(\lambda)}$.

\begin{theorem}[\textsc{Local uniqueness}] \label{thm:localuniqueness}
Let $G$ be the $k$-fold product of $d$-regular trees and let $x\in V(G)$.
Then for every $p_0\in(0,1]$ and $\varepsilon>0$, there exist $R>0$ and $\lambda_0\in(0,1)$ such that
\begin{equation}\label{equ:finitarytouching}
\inf_{\lambda\le\lambda_0} \mathbb P\big( A(\lambda,p_0,R,x) \big)>1-\varepsilon.
\end{equation}
\end{theorem}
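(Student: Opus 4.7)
My plan is to deduce Theorem~\ref{thm:localuniqueness} from the unbounded borders phenomenon for the ideal Poisson--Voronoi tessellation (IPVT) of $G=(\mathbb{T}_d)^k$ (to be proved later in this section as Theorem~\ref{thm:UnboundedTouching}), which asserts that in the IPVT every pair of cells shares an edge of $G$ almost surely. The second ingredient is the independence of the black/white cell colouring from the underlying Voronoi tessellation.

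First, I would establish the following finitary consequence: for every integer $N\ge1$ and every $\eta>0$, there exist $R>0$ and $\lambda_0>0$ such that, for all $\lambda\le\lambda_0$, with probability at least $1-\eta$ both of the following hold.
\begin{itemize}
\item[(a)] At least $N$ cells of $\mathrm{Vor}(\mathbf{X}^{(\lambda)})$ intersect $B_R(x)$.
\item[(b)] Any two cells of $\mathrm{Vor}(\mathbf{X}^{(\lambda)})$ intersecting $B_{3R}(x)$ share an edge of $G$.
\end{itemize}
Part (a) follows from the fact that the IPVT has infinitely many cells intersecting any fixed ball (all cells being unbounded), combined with the Fell-topology convergence of $\mathrm{Vor}(\mathbf{X}^{(\lambda)})$ to the IPVT as $\lambda\to0$. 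For (b), the number of cells intersecting $B_{3R}(x)$ is tight as $\lambda\to0$, so a union bound over the finitely many relevant pairs reduces the task to a fixed pair, for which the required touching is exactly the IPVT unbounded borders statement.

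Given this claim, I would conclude as follows. Fix $p_0\in(0,1]$ and $\varepsilon>0$, choose $N$ with $(1-p_0)^N<\varepsilon/2$, set $\eta=\varepsilon/2$, and obtain the corresponding $R>0$ and $\lambda_0>0$. Since the colouring is independent of $\mathrm{Vor}(\mathbf{X}^{(\lambda)})$, on the event that (a) and (b) hold the conditional probability that none of the $\ge N$ cells intersecting $B_R(x)$ is black is at most $(1-p_0)^N<\varepsilon/2$. If some such cell is black, then $\omega_{p_0}^{(\lambda)}\cap B_R(x)\ne\varnothing$. Moreover, (b) implies that any two black cells intersecting $B_{3R}(x)$ are joined by a $G$-edge whose endpoints both lie in $\omega_{p_0}^{(\lambda)}$, and since each black cell is connected in $G$ and wholly contained in $\omega_{p_0}^{(\lambda)}$, all vertices of $\omega_{p_0}^{(\lambda)}\cap B_{3R}(x)$ must belong to a common cluster of $\omega_{p_0}^{(\lambda)}$. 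A union bound over the two failure modes yields $\mathbb{P}(A(\lambda,p_0,R,x))\ge1-\varepsilon$, as desired.

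The main obstacle will be part (b) of the finitary claim. The event ``two prescribed cells share an edge'' is neither open nor closed in the Fell topology, so convergence of the tessellation alone will not suffice. This is where the explicit combinatorial structure of products of identical $d$-regular trees (the subject of the remaining subsections of Section~\ref{sec:Trees}) should play the decisive role, since it allows one to pin down the boundary edges between a fixed pair of cells and thereby upgrade the qualitative unbounded borders statement into the high-probability, uniform-in-small-$\lambda$ form required here.
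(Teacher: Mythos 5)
Your proposal is correct and takes essentially the same approach as the paper: your finitary claims (a) and (b) correspond precisely to Lemma~\ref{lem:NumberOfCells} and Theorem~\ref{thm:LocalTouching} (applied with radius $3R$), and the final conditional-colouring argument is the same. The open/closed obstacle you correctly identify is resolved in the paper by encoding each Voronoi diagram as a bond percolation configuration in $\{0,1\}^{E(G)}$ (Theorem~\ref{thm:Diagrams}), so that the events ``at least $K+1$ cells meet $B_R(o)$'' and ``cells meeting $B_{3R}(o)$ are pairwise adjacent via short paths'' become clopen cylinder events, which then pass to the weak limit.
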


This section is devoted to the proof of Theorem~\ref{thm:localuniqueness}.
The similarities and differences between this section and \cite{FMW23} are explained in detail in Section \ref{sec:introdiscrete}.
On a high level, the proof is as follows.
We encode a Bernoulli--Voronoi diagram as a bond percolation process by erasing all edges between vertices in different cells, and we show that any sequence of intensities $\lambda_n\to 0$ admits a subsequence $\lambda_{n_k}\to 0$ such that the Bernoulli--Voronoi diagrams encoded as bond percolations converge weakly to a limiting bond percolation, and this limiting process encodes the Voronoi diagram associated to a Poisson point process $\{X_i\}_{i\in\mathbb N}$ on a suitable boundary of~$G$.
The assumption that all factors in $G$ are $\mathbb{T}_d$ allows us to interpret $X_i:G\to \mathbb{Z}$ as a sum of horofunctions up to an additive constant, which we use to prove that almost all pairs of cells in the ideal Poisson Voronoi tessellation, i.e.~the diagram determined by $\{X_i\}_{i\in \mathbb{N}}$, share unbounded borders.
We further show that in this ideal tessellation, large balls intersect many cells with high probability. 
Using these two facts, fixing $p_0\in (0,1]$ and taking $k\in \mathbb{N}$ large enough guarantees
$$
\mathbb P\big( A(\lambda_{n_k},p_0,R,x) \big)>1-\varepsilon,
$$
as desired.

\subsection{The space of distance-like functions} \label{sec:Corona}

We start with fixing the notation.
We use the convention that $V(G)=G$ and denote as $\dist({-},{-})$ the graph distance on $G$.
That is, we have
$$\dist(a,b)=\sum_{i=1}^k \dist(a_i,b_i)$$
for every $a=(a_i)_{i=1}^k\in (\mathbb{T}_d)^k$, $b=(b_i)_{i=1}^k\in (\mathbb{T}_d)^k$, where with an abuse of notation we denote by $\dist({-},{-})$ the graph distance on $\mathbb{T}_d$ as well.
Let $\partial\mathbb{T}_d$ denote the totally disconnected and uncountable space of all ends in $\mathbb{T}_d$ and $\overline{\mathbb{T}_d}\coloneqq \mathbb{T}_d\cup \partial\mathbb{T}_d$. We endow both $\partial\mathbb{T}_d$ and $\overline{\mathbb{T}_d}$ with their canonical topologies. 
We say that $r\in \mathbb{T}_d$ \textbf{separates} two subsets $A$ and $B$ of $\overline{\mathbb{T}_d}$ if any geodesic ray in $\mathbb{T}_d$ 
intersecting both $A$ and $B$ must also contain $r$.

Given $\psi\in \overline{\mathbb{T}_d}$, we define the corresponding (normalized) horocycle $h^\psi:\mathbb{T}_d\to \mathbb{Z}$ as
$$h^\psi(a)=\dist(a,\psi)-\dist(o,\psi)$$
whenever $\psi\in \mathbb{T}_d$, and
$$h^\psi(a)=\dist(a,r)-\dist(o,r)$$
for any $r\in \mathbb{T}_d$ that separates $\{\psi\}$ from $\{a,o\}$ in the case $\psi\in \partial\mathbb{T}_d$.
Given a vector $\psi=(\psi_1,\dots,\psi_k)\in (\overline{\mathbb{T}_d})^k$, we define the corresponding (normalized) horocycle $h^\psi:G\to \mathbb{Z}$ as
$$h^\psi(a)=\sum_{i=1}^k h^{\psi_i}(a_i)$$
for $a=(a_1,\ldots,a_k)\in G$.
Note that $h^\psi({\bf o})=0$.

We write ${\bf D}$ for the {\bf space of distance-like functions} which is defined to be the closure in~$\mathbb{Z}^G$, under pointwise convergence, of the following set of shifted distance functions
$$\{\dist(a,{-})+\ell:a\in G, \ell\in \mathbb{Z}\}.$$
As a topological space, that is, endowed with the topology of pointwise convergence, ${\bf D}$ is a locally compact second countable (lcsc) space.
While it follows directly from the definition that every $f\in {\bf D}$ is $1$-Lipschitz, Proposition \ref{pr:BasicCorona} shows that every element of ${\bf D}$ is equal to a horofunction up to an additive constant.

Denote the automorphism group of $\mathbb T_d$ as $\aut(\mathbb T_d)$ and set $H\coloneqq (\aut(\mathbb T_d))^k$.
The group $H$ acts naturally on the space of distance-like functions $\mathbf D$.
Namely, if $\tau\in H,f\in \mathbf D$ and $v=(v_1,\ldots,v_k)\in G$, then we have
\[
\tau f(v)=f(\tau^{-1}v).
\]
The group $H$ also acts canonically on $(\partial \mathbb{T}_d)^k$ by permuting the ends in each coordinate.
The action of $H$ on $(\partial \mathbb{T}_d)^k$ extends to $(\partial \mathbb{T}_d)^k\times \mathbb{Z}$ by 
\[
\tau (\psi,m)=(\tau(\psi),h^\psi(\tau^{-1}({\bf o}))+m)
\]
for every $\tau\in H$ and $(\psi,m)\in (\partial \mathbb{T}_d)^k\times \mathbb{Z}$. We note that this is a discrete analogue of the Maharam extension of $(\partial \mathbb{T}_d)^k$ (see e.g. \cite[Section 4.2]{BN2013}).

Justified by Proposition~\ref{pr:BasicCorona}, we view $(\partial \mathbb{T}_d)^k\times \mathbb{Z}$ as a subset of ${\bf D}$ and use $(\psi,m)$ and $h^\psi+m$ interchangeably.
In fact, the map $(\psi,m)\to h^\psi+m$ defines a bijection between $(\overline{\mathbb{T}_d})^k\times \mathbb Z$ and $\bf D$.
As the proof of Proposition~\ref{pr:BasicCorona} is routine, we only include it in Appendix~\ref{app:Trees}. 
See Lemmas 5.4 and 5.5 in \cite{FMW23} for the relevant analogues to Proposition \ref{pr:BasicCorona} and Theorem \ref{thm:SubseqLimit}, below.

\begin{proposition}\label{pr:BasicCorona}
    Let $d\ge 3$, $k\ge 2$, $G=(\mathbb{T}_d)^k$ and ${\bf D}$ be defined as above.
    \begin{enumerate}
        \item[{\rm(i)}] For every $f\in {\bf D}$, there exist $\psi=(\psi_1,\dots,\psi_k)\in (\overline{\mathbb{T}_d})^k$ and $m\in \mathbb{Z}$ such that $f=h^\psi+m$.
        \item[{\rm(ii)}] For every $\psi=(\psi_1,\dots,\psi_k)\in (\overline{\mathbb{T}_d})^k$ and $m\in \mathbb{Z}$, we have that $h^\psi+m\in {\bf D}$.
        \item[{\rm(iii)}] The map $(\psi,m)\mapsto h^\psi +m$ from $(\partial \mathbb{T}_d)^k\times \mathbb{Z}$ to ${\bf D}$ is injective, continuous, has closed range and is $H$-equivariant.
    \end{enumerate}
\end{proposition}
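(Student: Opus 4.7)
The plan is to handle all three items by reducing to the one-factor case via the product decomposition $\dist(a,b)=\sum_{i=1}^k \dist(a_i,b_i)$. The workhorse is the classical fact that on the tree $\mathbb{T}_d$, $\dist(a^{(n)},v) - \dist(a^{(n)},o) \to h^{\psi}(v)$ pointwise in $v \in \mathbb{T}_d$ whenever $a^{(n)} \to \psi$ in $\overline{\mathbb{T}_d}$; this matches the two-case definition of $h^{\psi}$ and is immediate from the separating-vertex description.

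For part~(i), I take $f = \lim_n (\dist(a^{(n)},\cdot) + \ell_n) \in \mathbf{D}$ and apply sequential compactness of $\overline{\mathbb{T}_d}$ in each coordinate to pass to a subsequence along which $a^{(n)}_i \to \psi_i \in \overline{\mathbb{T}_d}$ for every $i$. Summing the coordinate-wise limits gives $f(v) - f(\mathbf{o}) = h^\psi(v)$ for all $v\in G$, so $f = h^\psi + m$ with $m \coloneqq f(\mathbf{o})$. For part~(ii), I exhibit an explicit approximating sequence: set $a^{(n)}_i \coloneqq \psi_i$ when $\psi_i \in \mathbb{T}_d$, and otherwise pick $a^{(n)}_i$ to be the vertex at distance $n$ from $o$ along the geodesic ray to $\psi_i$. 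A direct calculation then shows $\dist(a^{(n)},\cdot) + (m - \dist(a^{(n)},\mathbf{o})) \to h^\psi + m$ pointwise, so $h^\psi + m \in \mathbf{D}$.

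For part~(iii), the four properties can be verified in turn. \emph{Injectivity}: evaluation at $\mathbf{o}$ pins down $m$; if $\psi_i \neq \psi'_i$ for some $i$, then evaluation on the slice $v_j = o_j$ for $j\neq i$ reduces to a one-factor statement, and distinct points of $\overline{\mathbb{T}_d}$ give distinct normalized horofunctions on $\mathbb{T}_d$ since the rays to $\psi_i$ and $\psi'_i$ diverge at a common separating vertex past which one horofunction decreases by one at each step while the other increases. \emph{Continuity}: if $\psi^{(n)}_i \to \psi_i$ in $\overline{\mathbb{T}_d}$, then for any fixed $v_i \in \mathbb{T}_d$ the separating vertex for $\{\psi^{(n)}_i\}$ and $\{v_i,o\}$ eventually coincides with the one for $\{\psi_i\}$, giving $h^{\psi^{(n)}_i}(v_i) \to h^{\psi_i}(v_i)$; summing over $i$ and using that the $\mathbb{Z}$-component stabilizes under convergence in $\mathbb{Z}$ yields pointwise convergence in~$\mathbf{D}$. \emph{$H$-equivariance}: a direct computation using that each $\tau_i \in \aut(\mathbb{T}_d)$ is an isometry gives the identity $h^\psi(\tau^{-1}v) - h^\psi(\tau^{-1}\mathbf{o}) = h^{\tau\psi}(v)$, which is exactly what is needed to match the prescribed action on $(\partial\mathbb{T}_d)^k \times \mathbb{Z}$. \emph{Closed range}: given $h^{\psi^{(n)}} + m_n \to f$ with $\psi^{(n)} \in (\partial\mathbb{T}_d)^k$, I first extract $m$ by evaluation at $\mathbf{o}$, then use compactness of $(\overline{\mathbb{T}_d})^k$ to pass to a subsequence with $\psi^{(n)} \to \psi^*$; continuity yields $h^{\psi^*} = f - m$, which by injectivity on the full space identifies $\psi^*$ with the end vector produced by part~(i); since $\partial\mathbb{T}_d$ is closed in $\overline{\mathbb{T}_d}$ (because vertices of $\mathbb{T}_d$ are isolated in the end compactification), each component of $\psi^*$ lies in $\partial\mathbb{T}_d$, exhibiting $f$ as an element of the range.

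The one step not immediate from the product formula and the one-factor theory is the closed-range assertion, which genuinely relies on the fact that an accumulation of boundary ends cannot land at an interior vertex of $\mathbb{T}_d$. Everything else amounts to coordinate-wise bookkeeping from the factorization of the metric on $G$.
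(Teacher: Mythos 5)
Your proposal is correct and follows essentially the same route as the paper: reduce all three items to the one-factor case via $\dist(a,b)=\sum_i\dist(a_i,b_i)$, use sequential compactness of $\overline{\mathbb{T}_d}$ coordinate-wise for~(i), explicit approximating sequences for~(ii), and coordinate-wise separating-vertex arguments for~(iii). The only cosmetic difference is in~(iii): your closed-range argument is stated cleanly via compactness of $(\partial\mathbb{T}_d)^k$ plus integrality of the $\mathbb{Z}$-coordinate (the paper leaves this as ``similar to continuity''), and your injectivity argument works by restricting to a one-factor slice rather than sending a test sequence along a ray, but both are minor variants of the same idea; also note that your intermediate appeal to injectivity ``on the full space'' inside the closed-range step is redundant, since once $\psi^*\in(\partial\mathbb{T}_d)^k$ is established and continuity gives $f=h^{\psi^*}+m$, membership in the range is already proved.
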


\subsection{Invariant measures on the space of distance-like functions}

\newcommand{\vol}{\operatorname{vol}}

In this section we construct an $H$-invariant measure on the space of distance-like functions $\mathbf D$.
We denote by $\nu$ counting measure on $G$.
Observe that $\nu$ is $H$-invariant.

\begin{definition}
    For $t\in \mathbb{Z}$, 
    we define the $H$-equivariant embeddings $\iota_t:G\to {\bf D}$ as
    $$\iota_t((a_i)_{i=1}^k)=\dist(-,a)-t=\sum_{i=1}^k\dist(-,a_i)-t,$$
    where $a=(a_i)_{i=1}^k\in (\mathbb{T}_d)^k$.
\end{definition}

Given $\lambda\in (0,1]$ and $t\in\mathbb Z$, we denote by $\iota_t^*(\lambda\cdot\nu)$ the push-forward of $\lambda\cdot \nu$ via~$\iota_t$.
Note that $\iota_t^*(\lambda\cdot\nu)$ is $H$-invariant for any choice of $t\in \mathbb{Z}$ and $\lambda\in (0,1]$.
In the setting of higher rank symmetric spaces, see \cite[Theorem~2.2~(5)]{GR25}, we know that for every $\lambda>0$ there is $t\in \mathbb{R}$ such that $\vol(B_t(o))=\lambda^{-1}$, where $\vol$ is the corresponding Haar measure.
This is clearly not true in the discrete case.

\begin{definition}
    For $\lambda\in (0,1]$, define $t_\lambda$ to be the maximal $\mathbb Z$-value such that $\nu(B_{t_{\lambda}}(o))\le \lambda^{-1}$ and set $\mu_{\lambda}\coloneqq \iota_{t_\lambda}^*(\lambda\cdot\nu)$.
\end{definition}

By the definition we have that $\nu(B_{t_\lambda}(o))\le \lambda^{-1} \le \nu( B_{t_\lambda+1}(o))\le kd\cdot \nu(B_{t_\lambda}(o))$.
This can be compactly stated as $\lambda\in \Theta(\nu( B_{t_\lambda}(o))^{-1})$ for $\lambda\to 0$, which will be used repeatedly in the proofs.

Recall that a sequence of Radon measures $\mu_n$ on ${\bf D}$ \textbf{converges vaguely} to a measure~$\mu$, in symbols $\mu_n \xrightarrow{v} \mu$, if
$$\int_{{\bf D}} F \ d\mu_n\to \int_{{\bf D}} F \ d\mu$$
for every continuous $F:{\bf D}\to \mathbb{R}$ with compact support.
We write $\mathbb{M}(\bf D)$ for the space of Radon measures on ${\bf D}$ endowed with the topology of vague convergence.

\begin{theorem}[\textsc{Subsequential limits}]\label{thm:SubseqLimit}
    The sequence $(\mu_\lambda)_{0<\lambda\le 1}\subseteq \mathbb{M}({\bf D})$ is relatively compact.
    Moreover, any subsequential limit $\mu_{\lambda_n}\to \mu\in \mathbb{M}({\bf D})$ as $\lambda_n\to 0$ is:
    \begin{enumerate}
        \item[{\rm(i)}] $H$-invariant,
        \item[{\rm(ii)}] supported on $(\partial \mathbb T_d)^k\times \mathbb Z$,
        \item[{\rm(iii)}] of the form $(\beta)^k\times \theta$, where $\beta$ is the harmonic measure on $\partial\mathbb{T}_d$ and $\theta$ is a non-zero measure on $\mathbb{Z}$ such that $\theta(m)=\theta(0)(d-1)^m$.
    \end{enumerate}
\end{theorem}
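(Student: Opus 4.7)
The plan is to establish the four claims in order, heavily using the identification $\mathbf{D} \leftrightarrow (\overline{\mathbb{T}_d})^k \times \mathbb{Z}$ via $f \leftrightarrow (\psi,m)$ with $f = h^\psi + m$ from Proposition~\ref{pr:BasicCorona}. For relative compactness, I would observe that since $(\overline{\mathbb{T}_d})^k$ is compact, every compact $K \subset \mathbf{D}$ is contained in a slab $(\overline{\mathbb{T}_d})^k \times [-M,M]$, and $\mu_\lambda$ of this slab equals $\lambda \cdot |B_{t_\lambda+M}({\bf o}) \setminus B_{t_\lambda-M-1}({\bf o})|$. Combining the volume growth $|B_r({\bf o})| \in \Theta(r^{k-1}(d-1)^r)$ in $G$ with $\lambda \in \Theta(|B_{t_\lambda}({\bf o})|^{-1})$ yields a bound uniform in $\lambda$; Prokhorov's theorem then provides relative compactness.

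The $H$-invariance (i) follows because each $\mu_\lambda$ is $H$-invariant ($\iota_{t_\lambda}$ is $H$-equivariant and $\nu$ is $H$-invariant) and the $H$-action on $\mathbf{D}$ is continuous, so vague limits preserve invariance. For (ii), the plan is to show $\mu(K) = 0$ for any compact $K$ disjoint from $(\partial \mathbb{T}_d)^k \times \mathbb{Z}$. Any such $K$ lies in $\bigcup_{i=1}^k \{\psi : \psi_i \in B_R(o)\} \times [-M,M]$ for some $R, M$, where $B_R(o)$ is the ball in $\mathbb{T}_d$. Counting $a \in G$ with $\iota_{t_\lambda}(a) \in K$ requires $|\dist({\bf o},a) - t_\lambda| \le M$ and some coordinate $a_i$ confined to a set of bounded size; this yields a count of $O(t_\lambda^{k-2}(d-1)^{t_\lambda})$, which multiplied by $\lambda \in \Theta(t_\lambda^{-(k-1)}(d-1)^{-t_\lambda})$ gives $\mu_\lambda(K) \in O(t_\lambda^{-1}) \to 0$.

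For (iii), the plan is to combine uniqueness of $H$-invariant Radon measures on $(\partial \mathbb{T}_d)^k \times \mathbb{Z}$ with non-triviality of any subsequential limit. The $H$-action $\tau(\psi,m) = (\tau\psi, h^\psi(\tau^{-1}{\bf o}) + m)$ is the discrete Maharam extension of the quasi-invariant $H$-action on $(\partial \mathbb{T}_d)^k$ with respect to $\beta^k$, whose Radon-Nikodym cocycle is the product (over coordinates) of factors $(d-1)^{h^{\psi_i}(\tau_i^{-1}o)}$. Disintegrating an $H$-invariant Radon measure over $(\partial \mathbb{T}_d)^k$ and combining the cocycle identity with ergodicity of the $(\aut(\mathbb{T}_d))^k$-action on $((\partial \mathbb{T}_d)^k, \beta^k)$ forces the disintegration to be a constant multiple of $\beta^k \otimes \theta$ with $\theta(m+1)/\theta(m) = d-1$; hence $\theta(m) = \theta(0)(d-1)^m$. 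For non-triviality of $\theta(0)$, I would test against a continuous compactly supported non-negative $F$ concentrated near $\{m = 0\}$ and use the explicit lower bound $|\{a \in G : \dist({\bf o},a) = t_\lambda\}| \in \Theta(t_\lambda^{k-1}(d-1)^{t_\lambda})$ together with $\lambda \in \Theta(|B_{t_\lambda}({\bf o})|^{-1})$ to obtain a strictly positive $\liminf$ of $\int F \, d\mu_\lambda$.

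The main obstacle is the uniqueness-up-to-scalar step in (iii), the discrete analogue of \cite[Lemma~5.5]{FMW23}. The key technical verification is that the cocycle is as stated and that $(\aut(\mathbb{T}_d))^k$ acts ergodically on $((\partial \mathbb{T}_d)^k, \beta^k)$, which reduces to ergodicity of $\aut(\mathbb{T}_d)$ on $(\partial \mathbb{T}_d, \beta)$; the latter follows from the transitive action of the stabilizer of $o$ on $\partial \mathbb{T}_d$.
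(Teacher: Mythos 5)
Your proposal is correct and reaches the theorem, with parts (compactness), (i) and (ii) following essentially the same route as the paper: compactness from the slab bound $\mu_\lambda\bigl((\overline{\mathbb{T}_d})^k\times[-M,M]\bigr)=\lambda\cdot|B_{t_\lambda+M}(\mathbf o)\setminus B_{t_\lambda-M-1}(\mathbf o)|$ and the growth estimate $\lambda\in\Theta(t_\lambda^{-(k-1)}(d-1)^{-t_\lambda})$, invariance passing through vague limits, and (ii) from the observation that forcing one coordinate into a fixed ball and $m$ into $[-M,M]$ loses a factor of $t_\lambda$. (The paper organizes (ii) slightly differently, using $H$-invariance and $\sigma$-additivity to reduce to the compact-open sets $X_{m,\ell}=\{h^\psi+m:\psi\in\{o\}^\ell\times(\overline{\mathbb{T}_d})^{k-\ell}\}$, whereas you bound $\mu_\lambda(K)$ for arbitrary compact $K$ disjoint from $(\partial\mathbb{T}_d)^k\times\mathbb{Z}$; both hinge on the same counting. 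Also, for vague convergence of Radon measures the relevant criterion is uniform boundedness on compacts rather than Prokhorov's theorem for probability measures — a cosmetic point since your estimate is exactly the right one.) Where you genuinely depart is (iii): you propose the abstract Maharam/cocycle route of Fr\k{a}czyk--Mellick--Wilkens, disintegrating a general $H$-invariant Radon measure, invoking ergodicity of $(\aut\mathbb{T}_d)^k$ on $((\partial\mathbb{T}_d)^k,\beta^k)$, and using the Radon--Nikodym cocycle identity to force $\mu=\beta^k\otimes\theta$ with $\theta(m+1)/\theta(m)=d-1$. The paper instead gives a short elementary argument: it first establishes $0<\theta(m)<\infty$ by a direct two-sided count of $\nu(Y_{m,0,n})$; then uses $H_{\mathbf o}$-invariance together with unique ergodicity of the compact stabilizer action to get the $\beta^k$ factor; and computes the ratio $\theta(m+1)/\theta(m)=d-1$ by applying a single explicit automorphism $\alpha$ shifting one tree coordinate towards a fixed neighbor of $o$. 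Your route is more conceptual and mirrors the Lie-group setting but requires verifying the ergodicity and the precise sign and direction of the cocycle (your stated factor $(d-1)^{h^{\psi_i}(\tau_i^{-1}o)}$ is the Maharam cocycle from the paper's definition rather than the literal Radon--Nikodym derivative of $\tau_*\beta$, which is $(d-1)^{-h^\psi(\tau o)}$; the two are compatible after the appropriate change of variables, but this is exactly the kind of bookkeeping the paper's concrete shift argument avoids). Both approaches are sound; the paper's is shorter and self-contained, while yours emphasizes the structural reason the limit must factor.
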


\begin{proof}
    The fact that the sequence $(\mu_\lambda)_{0<\lambda\le 1}\subseteq \mathbb{M}({\bf D})$ is relatively compact follows as in \cite[Proposition 3.3, Lemma 3.4]{FMW23}; 
    alternatively one may directly use the estimates below, using the fact that for fixed $p\in\mathbb N$,
    \begin{align}\label{eq:Growth}
        & |\{w\in (\mathbb{T}_d)^p:\dist({\bf o},w)=q\}|,\nonumber\\
        & |\{w\in (\mathbb{T}_d)^p:\dist({\bf o},w)\le q\}|\in \Theta\left(q^{p-1} (d-1)^q\right)
    \end{align}
    as $q\to\infty$. 
    
    For completeness, we include a proof of \eqref{eq:Growth}. It suffices to prove the statement for $|\{w\in (\mathbb{T}_d)^p:\dist({\bf o},w)=q\}|$. Note that
    \begin{align*}
        & |\{w\in (\mathbb{T}_d)^p:\dist({\bf o},w)=q\}| \\
        & \qquad = \sum_{q_1,\ldots,q_p} |\{w\in (\mathbb{T}_d)^p: \dist(o,w_1)=q_1,\ldots,\dist(o,w_p)=q_p\}|,
    \end{align*}
    where the sum runs over $q_1,\ldots,q_p$ with $q_i\in\{0,\ldots,q\}$ such that $\sum_{i=1}^p q_i=q$. For every summand, we have that 
    $$
    |\{w\in (\mathbb{T}_d)^p: \dist(o,w_1)=q_1,\ldots,\dist(o,w_p)=q_p\}| \le \prod_{i=1}^p d(d-1)^{q_i-1} \in \Theta((d-1)^q)
    $$
    (note that the inequality is an equality except when some $q_i$ is zero) and
     $$
    |\{w\in (\mathbb{T}_d)^p: \dist(o,w_1)=q_1,\ldots,\dist(o,w_p)=q_p\}| \ge \prod_{i=1}^p (d-1)^{q_i}=(d-1)^q,
    $$
    while the number of summands is $\binom{q+p-1}{p-1}\in\Theta(q^{p-1})$. The claim follows. 
    
    Item (i) follows as $\mu_\lambda$ is $H$-invariant for every $\lambda$ combined with the fact that $H$-invariance is preserved by passing to vague limits.

    (ii).
    Suppose for a contradiction that
    $$\mu({\bf D}\setminus ((\partial \mathbb T_d)^k\times \mathbb Z))>0.$$ 
    By Proposition~\ref{pr:BasicCorona} (i), $H$-invariance and $\sigma$-additivity of $\mu$, permuting the coordinates if necessary, there are $m\in \mathbb{Z}$ and $\ell\in\{1,\ldots,k\}$ such that $\mu(X_{m,\ell})>0$, where
    $$X_{m,\ell}=\{h^\psi+m:\psi\in \{o\}^\ell\times (\overline{ \mathbb{T}_d})^{k-\ell}\}.$$
    Observe that $X_{m,\ell}$ is a compact and open subset of ${\bf D}$. It follows that 
    $$\lambda_n\cdot \nu(\iota_{t_{\lambda_n}}^{-1}(X_{m,\ell}))=\mu_{\lambda_n}(X_{m,\ell})\to \mu(X_{m,\ell}).$$
    Set $Y_{m,\ell,n}\coloneqq  \iota_{t_{\lambda_n}}^{-1}(X_{m,\ell})$. The definition of $\iota_{t_{\lambda_n}}$ then gives that $Y_{m,\ell,n}$ is equal to
    \begin{equation}\label{eq:Definition}
        \begin{split}
            \left\{(v_1,\dots,v_k)\in (\mathbb{T}_d)^k:v_1=\dots=v_\ell=o, \ \sum_{i=\ell+1}^k\dist(o,v_i)=m+t_{\lambda_n}\right\}.
        \end{split}
    \end{equation}
    By \eqref{eq:Growth}, we have that $\lambda\in \Theta(\nu( B_{t_\lambda}(o))^{-1})=\Theta(t_\lambda^{-k+1}(d-1)^{-t_\lambda})$ as $\lambda\to0$. Similarly,
    $$
    \nu(Y_{m,\ell,n}) \in \Theta( (m+t_{\lambda_n})^{k-\ell-1}(d-1)^{m+t_{\lambda_n}})
    $$
    as $n\to\infty$. Combining the two, we obtain that
    $$
    \lambda_n \cdot \nu(Y_{m,\ell,n}) \in \Theta\bigg( \frac{(m+t_{\lambda_n})^{k-\ell-1}(d-1)^{m+t_{\lambda_n}}}{t_{\lambda_n}^{k-1}(d-1)^{t_\lambda}} \bigg) = \Theta( t_{\lambda_n}^{-\ell})=o(1)
    $$
    as $n\to\infty$, which contradicts the assumption $\mu(X_{m,\ell})>0$.

    (iii). Define 
    \[\theta(m)=\mu((\partial \mathbb{T}_d)^k\times \{m\})\overset{\rm (ii)}{=}\mu((\overline{\mathbb{T}_d})^k\times \{m\})=\mu(X_{m,0}).\]
    We show that $0<\theta(m)<\infty$.
    As $X_{m,0}\subset\mathbf D$ is compact and open, we have that
    $$
    \mu(X_{m,0}) = \lim_{n\to\infty} \lambda_n \nu(Y_{m,0,n}).
    $$
    Recall from the proof of \eqref{eq:Growth} that
    \begin{align*}
        \nu(Y_{m,0,n}) & \le \binom{m+t_{\lambda_n}+k-1}{k-1}d^k(d-1)^{m+t_{\lambda_n}-k} \\
        & = \frac{d^k}{(d-1)^k(k-1)!} (m+t_{\lambda_n}+k-1)\cdot\ldots\cdot(m+t_{\lambda_n}+1)(d-1)^{m+t_{\lambda_n}} 
    \end{align*}
    and
    \begin{align*}
        \nu(Y_{m,0,n}) & \ge \binom{m+t_{\lambda_n}+k-1}{k-1}(d-1)^{m+t_{\lambda_n}} \\
        & = \frac{1}{(k-1)!} (m+t_{\lambda_n}+k-1)\cdot\ldots\cdot(m+t_{\lambda_n}+1)(d-1)^{m+t_{\lambda_n}}. 
    \end{align*}
    Since $\lambda_n=\Theta(t_{\lambda_n}^{-k+1}(d-1)^{-t_{\lambda_n}})$ as $n\to\infty$, we obtain that there exist constants $c,C>0$, which depend on $k$ and $d$ but not on $m$, such that 
    $$
    c \cdot (d-1)^m \le \lim_{n\to\infty} \lambda_n \nu(Y_{m,0,n}) \le C(d-1)^m.
    $$
    In particular, we have that $0<\theta(m)<\infty$ for every $m\in \mathbb{Z}$.
    
    Let $H_{\bf o}$ be the stabilizer of ${\bf o}$ in $H$.
    Observe that if $\tau\in H_{\bf o}$, $(\psi,m)\in (\partial \mathbb{T}_d)^k\times \mathbb{Z}$, then $\tau\cdot (\psi,m)=(\tau(\psi),m)$.
    That is, $(\partial \mathbb{T}_d)^k\times \{m\}$ is $H_{\bf o}$-invariant for every $m\in \mathbb{Z}$.
    It follows that for every $m\in \mathbb{Z}$ we have that $\mu\upharpoonright \left((\partial \mathbb{T}_d)^k\times \{m\}\right)=\theta(m)\beta^k$, where $\beta$ is the harmonic measure on $\partial\mathbb{T}_d$.
    
    Finally, we show that for every $m\in \mathbb{Z}$, we have that $\frac{\theta(m+1)}{\theta(m)}=d-1$.
    Pick $v\in \mathbb{T}_d$ to be an arbitrary neighbor of $o$ in the $1$st copy of $\mathbb{T}_d$ and define
    $$A=\left\{(\psi_1,\dots,\psi_k)\in (\partial \mathbb{T}_d)^k: v\in [0,\psi_1]\right\},$$
    where $[0,\psi]$ denotes the geodesic from $o$ to $\psi$.
    Observe that $A$ is measurable and $\mu(A\times \{m\})=\frac{\theta(m)}{d}$ for every $m\in \mathbb{Z}$.
    Let $\alpha=(\alpha_1,\dots,\alpha_k)\in H$ be such that $\alpha_2=\dots=\alpha_k=\id_{\mathbb{T}_d}$, and $v\in [o,\alpha_1(\psi)]$ whenever $v\in [o,\psi]$.
    In other words, $\alpha$ shifts the ends in the first coordinate towards $v$ and leaves the other coordinates intact.
    By $H$-invariance of $\mu$, we have that
    $$\mu(\alpha\cdot (A\times \{m\}))=\mu(A\times \{m\})=\frac{\theta(m)}{d}.$$
    By the definition, we have that if $(\psi,m)=((\psi_1,\dots,\psi_k),m)\in A\times \{m\}$, then
    \begin{equation*}
        \begin{split}
            \alpha\cdot (\psi,m)= & \ ((\alpha(\psi),h^\psi(\alpha^{-1}({\bf o}))+m)\\
            = & \ ((\alpha_1(\psi_1),\psi_2,\dots,\psi_k),m+1).
        \end{split}
    \end{equation*}
    By choice of $\alpha_1$, we have that $\mu(\alpha\cdot (A\times \{m\}))=\frac{\theta(m+1)}{d(d-1)}$. The claim follows.  
\end{proof}

\begin{remark}
In the upcoming paper \cite{DK24}, the first and third author (among other things) strengthen Theorem~\ref{thm:SubseqLimit} by showing that fixing $\xi\in \mathbb R$ and choosing $\lambda_t\coloneqq  \nu(B_{t}(o))^{-1} (d-1)^\xi$, the measures $\iota^*_t(\lambda_t\cdot\mu)$ converge to a limit satisfying $\theta(m)=(d-2)(d-1)^{m-1+\xi}$. Moreover, this is generalized in \cite{DK24} to graphs with sufficiently nice growth.
\end{remark}

\subsection{Unbounded borders in products}

The next definition appears in a similar form in \cite[Section~6]{FMW23}. Let $R>1$, $\Pi$ be a countable subset of $\bf D$, and $f_1,f_2\in \Pi$. The {\bf $R$-wall} of~$\Pi$ with respect to $(f_1,f_2)$ is 
\begin{align*}
    & W(R,\Pi,f_1,f_2)\coloneqq \\
    & \qquad \{v\in G: |f_1(v)-f_2(v)|\le 1 \text{ and } \forall f\in \Pi\setminus \{f_1,f_2\},  f_1(v)+R< f(v)\}.
\end{align*}
Note that while \cite[Section~6]{FMW23} requires $f_1(v)=f_2(v)$ in their definition in the continuous setup, in the discrete case this has to be replaced by $|f_1(v)-f_2(v)|\le 1$.

Throughout this section we let $\mu$ be a subsequential vague limit of $(\mu_{\lambda_n})$ as $\lambda_n\rightarrow 0$.

\begin{theorem}[{\textsc{Unbounded borders}}]\label{thm:UnboundedTouching}
The Poisson point process $\bf X$ with intensity $\mu$ on $\bf D$ has the following property almost surely. For every $R>1$ and $f_1,f_2\in \mathbf X$, the $R$-wall $W(R,\mathbf{X},f_1,f_2)$ is infinite.
\end{theorem}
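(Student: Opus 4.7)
The plan is to follow the template of Theorem~6.1 in FMW23, adapted to the combinatorial tree product. Since $\mathbf X$ is a.s.\ countable and the $R$-wall shrinks as $R$ grows, it suffices to prove, for each fixed integer $R$ and for Palm-almost every pointed configuration $(\mathbf X, f_1, f_2)$, that $W(R, \mathbf X, f_1, f_2)$ is infinite. By Slivnyak's theorem, conditional on $f_1, f_2 \in \mathbf X$, the residual process $\mathbf X' \coloneqq \mathbf X \setminus \{f_1, f_2\}$ is an independent Poisson process of intensity $\mu$. The event $v \in W(R, \mathbf X, f_1, f_2)$ then splits into a deterministic geometric condition $|f_1(v) - f_2(v)| \le 1$ and an independent void condition on $\mathbf X'$ over the set $B_v \coloneqq \{f \in \mathbf D : f(v) \le f_1(v) + R\}$, of probability $\exp(-\mu(B_v))$.

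First I would produce a single seed vertex $v_0 = v_0(f_1, f_2) \in G$ lying in the wall with positive conditional probability. Since $\beta$ is atomless, we may assume $\psi^{(1)} \neq \psi^{(2)}$, where $f_i = h^{\psi^{(i)}} + m_i$; the existence of some $v_0$ with $|f_1(v_0) - f_2(v_0)| \le 1$ is then an elementary consequence of the geometry of products of trees by choosing the coordinates of $v_0$ judiciously along the geodesics towards the $\psi^{(i)}_j$'s. The finiteness $\mu(B_{v_0}) < \infty$ follows from the explicit product form of $\mu$ in Theorem~\ref{thm:SubseqLimit}(iii): integrating the exponential measure $\theta(m) = \theta(0)(d-1)^m$ in the $\mathbb Z$-direction over the half-line $\{m \le f_1(v_0) + R - h^\psi(v_0)\}$ yields a summable geometric series whose $\beta^k$-integral is finite by the same volume-growth estimate used in Theorem~\ref{thm:SubseqLimit}(ii). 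Hence $\mathbb P(v_0 \in W \mid f_1, f_2) = \exp(-\mu(B_{v_0})) > 0$.

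Next, I would identify a subgroup $K \le H$ that stabilizes both $f_1$ and $f_2$ and whose orbits on $G$ are unbounded, as produced by Proposition~\ref{prop:UnboundedStab}; pick $(\kappa_n) \subset K$ with $\kappa_n^{-1} v_0 \to \infty$. Since $K$ fixes $f_1, f_2$ and preserves the intensity $\mu$, the event $\{\kappa_n^{-1} v_0 \in W(R, \mathbf X, f_1, f_2)\}$ is the $\kappa_n$-pullback of the seed event $\{v_0 \in W\}$ under the $K$-action on $\mathbf X'$. The Howe--Moore--type mixing statement Lemma~\ref{lm:HoweMoore} applied to this action then implies that infinitely many of these events occur almost surely, producing infinitely many wall vertices. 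A union bound over $R \in \mathbb N$ and over the a.s.\ countable pairs in $\mathbf X$ completes the proof.

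The main obstacle I expect is in the third step, namely the construction of a suitable stabilizer subgroup $K$ together with the verification that its induced action on the Poisson process $\mathbf X'$ is sufficiently mixing. The natural action of $\aut(\mathbb T_d)^k$ on $(\partial \mathbb T_d)^k \times \mathbb Z$ is a discrete Maharam extension, and the exponential form of $\theta$ in Theorem~\ref{thm:SubseqLimit}(iii) is precisely what makes this extended action measure preserving. Checking that the resulting measure-preserving $K$-action on $\mathbf X'$ is mixing on $L^2$ is the technical heart of the argument and is the role of Lemma~\ref{lm:HoweMoore}, mirroring the use of Howe--Moore for non-compact simple real Lie groups in FMW23.
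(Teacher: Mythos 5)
Your outline follows the paper's own sketch essentially step for step: Slivnyak/Mecke to pass to the residual Poisson process conditioned on $f_1,f_2\in\mathbf X$, Proposition~\ref{prop:UnboundedStab} to produce an unbounded sequence $(\kappa_n)$ in the joint stabilizer (noting that $\mu\times\mu$-a.e.\ pair $(f_1,f_2)$ differs in all, in particular at least two, boundary coordinates since $\beta$ is atomless), and Lemma~\ref{lm:HoweMoore} to control correlations along the orbit $\kappa_n^{-1}v_0$. The only place your write-up over-states what a lemma gives you is the final step: the decay estimate of Lemma~\ref{lm:HoweMoore} (equivalently, asymptotic independence of the void events $\{\mathbf X'\cap B_{\kappa_n^{-1}v_0}=\emptyset\}$) does not by itself imply that infinitely many of these events occur a.s.\ --- one must feed the resulting second-moment bound $\mathbb P(A_i\cap A_j)\le c^2\exp\big(\mu(B_{\kappa_i^{-1}v_0}\cap B_{\kappa_j^{-1}v_0})\big)$ into the Kochen--Stone (second-moment Borel--Cantelli) theorem, which is exactly the ingredient the paper names and the step in the proof of \cite[Theorem~6.1]{FMW23} that you are implicitly invoking.
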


The proof of Theorem~\ref{thm:UnboundedTouching} is done in the same way as the proof of \cite[Theorem~6.1]{FMW23}.
We include here completely elementary proofs for the combinatorial core of the argument, Proposition~\ref{prop:UnboundedStab} and Lemma~\ref{lm:HoweMoore}.
We give a sketch for the proof of Theorem \ref{thm:UnboundedTouching} at the end of this section.

\begin{proposition}[\textsc{Unbounded stabilizers}] \label{prop:UnboundedStab} 
    Let $f,g\in {\bf D}$ with $f=h^\psi+\ell$ and $g=h^\phi+\ell'$, where $\psi=(\psi_1,\ldots,\psi_k)\in(\partial \mathbb T_d)^k$ and $\phi=(\phi_1,\ldots,\phi_k)\in(\partial \mathbb T_d)^k$ such that 
    there exist $1\leq i,j\leq k,i\neq j$ with $\psi_i\neq\phi_i$ and $\psi_j\neq\phi_j$.
    Then there exists an unbounded sequence in $H$ fixing both $f$ and $g$.
\end{proposition}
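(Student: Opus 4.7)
The plan is to reduce the statement to an elementary integer balancing condition coordinate-by-coordinate. Unpacking the action of $H$ on $\mathbf D$ recalled in the previous subsection, an element $\tau = (\tau_1, \ldots, \tau_k)$ fixes $f = h^\psi + \ell$ precisely when $\tau_m(\psi_m) = \psi_m$ for every $m$ and $\sum_{m=1}^k h^{\psi_m}(\tau_m^{-1} o) = 0$, and the analogous conditions describe when $\tau$ fixes $g$. Thus I would aim to build an unbounded sequence $(\tau_n) \subset H$ satisfying these four conditions simultaneously.

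My construction concentrates all non-trivial action on the two distinguished coordinates $i$ and $j$. For every $m \notin \{i, j\}$ I set $\tau_{n,m} = \id$, which trivially satisfies the constraints in those coordinates. Since $\psi_i \neq \phi_i$, the two ends determine a bi-infinite axis in the $i$-th copy of $\mathbb T_d$, and I take $\tau_{n,i}$ to be the hyperbolic translation along this axis by $n$ steps toward $\psi_i$; analogously, I take $\tau_{n,j}$ to be a hyperbolic translation along the $(\phi_j,\psi_j)$-axis by $n$ steps toward $\phi_j$ (the reversed orientation relative to coordinate $i$ is crucial). By construction both $\tau_{n,i}$ and $\tau_{n,j}$ fix the relevant pair of ends pointwise.

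The key computation is the horofunction-shift identity: for a hyperbolic translation $\tau$ of $\mathbb T_d$ with axis endpoints $(\phi,\psi)$ and translation length $t$ toward $\psi$, one has $h^\psi(\tau^{-1} o) = t$ and $h^\phi(\tau^{-1} o) = -t$ for every basepoint $o$. I would prove this by projecting $o$ onto the axis at a point $r_0$, choosing a reference vertex $r$ on the axis far past $r_0$ toward $\psi$, and using that $\tau^{-1} r_0$ lies $t$ steps on the $\phi$-side of $r_0$ together with the isometric identity $\dist(\tau^{-1} o, \tau^{-1} r_0) = \dist(o, r_0)$; the definition of $h^\psi$ then yields $t$ by a short length comparison. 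Plugging in, the two shift sums become $0 + n + (-n) + 0 = 0$ for $\psi$ and $0 + (-n) + n + 0 = 0$ for $\phi$, so $\tau_n$ fixes both $f$ and $g$.

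Finally, unboundedness comes for free: since $\tau_{n,i}^{-1} o$ has graph distance to $o$ of order $n$, the points $\tau_n^{-1} \mathbf o \in G$ escape every finite ball, and hence $\tau_n$ leaves every compact subset of the locally compact group $H$. I do not foresee any real obstacle beyond the horofunction-shift identity, which is a standard tree computation.
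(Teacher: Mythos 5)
Your proposal is correct and takes essentially the same approach as the paper: both constructions act only on coordinates $i$ and $j$ by hyperbolic translations along the $(\psi_i,\phi_i)$- and $(\psi_j,\phi_j)$-axes with opposite orientations and equal translation lengths, so that the two Busemann shifts cancel. The only cosmetic difference is that you invoke the $H$-equivariance and injectivity of the identification $(\psi,m)\mapsto h^\psi+m$ from Proposition~\ref{pr:BasicCorona}~(iii) to reduce fixing $f$ to the single condition $h^\psi(\tau^{-1}\mathbf{o})=0$, whereas the paper verifies $f(\tau_n^{-1}v)=f(v)$ directly for every $v\in G$; both are valid, and your horofunction-shift identity is exactly the computation in the paper specialized to the basepoint.
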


\begin{proof}
    Let $1\leq i,j\leq k$ with $\psi_i\neq\phi_i$ and $\psi_j\neq\phi_j$. We denote the the unique geodesic from $\psi_i$ to $\phi_i$ as $[\psi_i,\phi_i]\subset \overline{\mathbb T_d}$ and use similar notation for any geodesic or geodesic ray in $\overline{\mathbb T_d}$.
    Pick $\sigma_i\in\aut(\mathbb T_d)$ such that if $v_i\in[\psi_i,\phi_i],$ then $\sigma_i(v_i)$ is the neighbor of $v_i$ in $[\psi_i,\phi_i]$ closer to $\phi_i$, and pick $\sigma_j\in\aut(\mathbb T_d)$ the same way.

    Let $N\coloneqq \{(n_1,n_2)\in\mathbb Z^2 : n_1> 0,n_2< 0,n_1+n_2=0\}$, and let $n\coloneqq (n_1,n_2)\in N$.
    Set $\tau_i=\sigma_i,\tau_j=\sigma_j$ and
    \[\tau_n\coloneqq (\id,\ldots,\tau_i^{n_1},\ldots,\tau_j^{n_2},\ldots,\id).\]
    Then $\tau_n\in H$ and moves any vertex in $G$ component-wise in the following way:  component $i$ moves closer to $\phi_i$ by $n_1$ steps, component $j$ moves closer to $\psi_j$ by $n_2$ steps, and all other components are fixed.
    We show that $\tau_n$ stabilizes both $f=h^{\psi}+m$ and $g=h^{\phi}+\ell'$.

    To do so we show that $f(v)=\tau_n f(v)$ for every $v\in G$; the argument for $g$ is completely analogous.
    Let $v=(v_1,\ldots, v_k)\in G$.
    Then, by the definition, we have
    \begin{equation*}
    \begin{split}
        \tau_n f(v)-f(v) = h^{\psi_i}(\sigma_i^{-n_1}(v_i))-h^{\psi_i}(v_i) + h^{\psi_j}(\sigma_j^{-n_2}(v_j))-h^{\psi_j}(v_j).
    \end{split}
    \end{equation*}
    We claim that $-n_1=h^{\psi_i}(\sigma_i^{-n_1}(v_i))-h^{\psi_i}(v_i)$ and $-n_2=h^{\psi_j}(\sigma_j^{-n_2}(v_j))-h^{\psi_j}(v_j)$, which clearly implies the statement.

    Let $r_1\in [o,\psi_i]$ separate $\{\psi_i\}$ from $\{o,v_i\}$.
    Note that $\{\sigma_i^{-n_1}(r_1)\}$ separates $\{\psi_i\}$ from $\{o,\sigma_i^{-n_1}(v_i)\}$ as $\sigma^{-n_1}_i(o)\in [0,\psi_i]$ by $n_1>0$.
    By the definition, we can write
    \begin{equation*}
        \begin{split}
            h^{\psi_i}(\sigma_i^{-n_1}(v_i))=& \ \dist(\sigma_i^{-n_1}(v_i),\sigma_i^{-n_1}(r_1))-\dist(o,\sigma_i^{-n_1}(r_1))\\
            = & \ \dist(v_i,r_1)-\dist(\sigma_i^{n_1}(o),r_1)\\
            = & \ \dist(v_i,r_1)-(\dist(o,r_1)+n_1)=h^{\psi_i}(v_i)-n_1,
        \end{split}
    \end{equation*}
    where $\dist(\sigma_i^{n_1}(o),r_1)=\dist(o,r_1)+n_1$ follows from $o\in [r_1,\sigma_i^{n_1}(o)]$.
    To show that $-n_2=h^{\psi_j}(\sigma_j^{-n_2}v_j)-h^{\psi_j}(v_j)$, we start by taking $r_2\in [o,\psi_j]$ that separates $\{\psi_j\}$ from $\{o,\sigma^{-n_2}_j(v_j)\}$.
    After that the argument is analogous.
    This finishes the proof.
\end{proof}

\begin{lemma}\label{lm:HoweMoore}
    Let $d\ge3,k\ge2,G=(\mathbb T_d)^k$ and $\mathbf D$ be defined as above.
    Suppose that $\mu_{\lambda_{n}}\xrightarrow{v}\mu$ as $\lambda_n\to 0$.
    Then for every $m\in\mathbb Z$,
    \begin{align*}
        \mu\big( \{(\psi,\ell) \in (\partial\mathbb T_d)^k  \times\mathbb Z : \max \{ h^\psi(u)+\ell,& \  h^\psi(v)+\ell \} \le m \} \big) \\
        & \qquad \in O\Big( {\rm dist}(u,v)^k (d-1)^{-{\rm dist}(u,v)/4} \Big)
    \end{align*}
    as ${\rm dist}(u,v)\to\infty$.
\end{lemma}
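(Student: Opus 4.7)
The plan is to combine three structural inputs: the factorization $\mu = \beta^k \otimes \theta$ with geometric weights $\theta(\ell) = \theta(0)(d-1)^\ell$ from Theorem~\ref{thm:SubseqLimit}(iii); the coordinatewise splitting $h^\psi(w) = \sum_{i=1}^k h^{\psi_i}(w_i)$ of horofunctions on the product; and the explicit cylinder measures of harmonic measure on $\partial\mathbb T_d$.

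First I would rewrite the event in question as $A = \{(\psi,\ell) : \ell \le m - \max(h^\psi(u), h^\psi(v))\}$ and sum the geometric series in $\ell$ to obtain
\[
\mu(A) = C_{d,m} \int_{(\partial\mathbb T_d)^k} (d-1)^{-\max(h^\psi(u), h^\psi(v))} \, d\beta^k(\psi).
\]
Using $\max(a,b) \ge (a+b)/2$ and the coordinatewise splitting of horofunctions, the integrand is dominated by $\prod_{i=1}^k (d-1)^{-(h^{\psi_i}(u_i)+h^{\psi_i}(v_i))/2}$, and Fubini factorizes the bound as $C_{d,m} \prod_{i=1}^k F(s_i)$, where $s_i := \mathrm{dist}(u_i,v_i)$ and
\[
F(s) := \int_{\partial\mathbb T_d}(d-1)^{-(h^\psi(u')+h^\psi(v'))/2} \, d\beta(\psi)
\]
for any $u',v' \in \mathbb T_d$ at distance $s$ (well-defined by transitivity of $\aut(\mathbb T_d)$ on ordered pairs at distance $s$, together with a change-of-basepoint argument as below).

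Second, I would compute $F(s)$ explicitly using the structure of horofunctions on trees. Parametrizing the geodesic $[u',v']$ as $(x_0,\ldots,x_s)$, the map $j \mapsto h^\psi(x_j)$ is a $\pm 1$ walk on $\mathbb Z$ with a unique valley at some index $j_\psi$, yielding the key identity $h^\psi(u')+h^\psi(v') = 2 h^\psi(x_{j_\psi}) + s$. Changing the basepoint of the harmonic measure via the Radon--Nikodym identity $d\beta_{u'} = (d-1)^{-h^\psi(u')}d\beta_o$ reduces matters to the case where the harmonic measure and horofunctions are based at $u'$, where $h^\psi_{u'}(x_{j_\psi}) = -j_\psi$. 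Partitioning $\partial\mathbb T_d$ by the value of $j_\psi \in \{0,\ldots,s\}$ and plugging in the standard cylinder measures $\beta_{u'}(\{j_\psi = 0\}) = (d-1)/d$, $\beta_{u'}(\{j_\psi = j\}) = (d-2)/(d(d-1)^j)$ for $0<j<s$, and $\beta_{u'}(\{j_\psi = s\}) = 1/(d(d-1)^{s-1})$ gives
\[
F(s) = (d-1)^{-s/2} \cdot \frac{2(d-1) + (s-1)(d-2)}{d} \in \Theta\!\big(s(d-1)^{-s/2}\big),
\]
with $F(0)=1$.

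Finally, with $s := \mathrm{dist}(u,v) = \sum_i s_i$, AM--GM gives $\prod_i (1+s_i) \le ((k+s)/k)^k \in O(s^k)$, so
\[
\mu(A) \le C'_{d,m,k}\,(1+s)^k (d-1)^{-s/2} \in O\!\big(s^k (d-1)^{-s/4}\big)
\]
as $s \to \infty$, since $(d-1)^{-s/2} \le (d-1)^{-s/4}$ for $s \ge 0$ (so the argument in fact yields the stronger exponent $-s/2$). The only technical subtlety is the bookkeeping around the basepoint of the harmonic measure (which Theorem~\ref{thm:SubseqLimit} delivers based at $o$ rather than at $u'$), handled cleanly by the Radon--Nikodym identity above.
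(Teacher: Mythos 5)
Your proof is correct and proves the lemma (indeed with the stronger exponent $-\operatorname{dist}(u,v)/2$), but it is organized differently from the paper's argument. The paper works directly with the cylinder decomposition $A_{i,j}=\{\psi_i:h^{\psi_i}(v_i)=\|v_i\|-2j\}$ (which corresponds to your valley index $j_\psi$), shows each box $\bigl(\prod_i A_{i,j_i}\bigr)\times\{\ell\le m_J\}$ has $\mu$-mass $O((d-1)^{m_J-\sum j_i})$, and then replaces the exponent by $-\|v\|/4$ via a case split on whether $\sum j_i\ge\|v\|/4$; it finishes with a crude count of at most $(\|v\|+1)^k$ boxes. You instead sum out $\theta$ first to get an integral of $(d-1)^{-\max(h^\psi(u),h^\psi(v))}$ over $\beta^k$, decouple the coordinates via the pointwise inequality $\max(a,b)\ge(a+b)/2$, use Fubini and the Radon--Nikodym identity $d\beta_{u'}=(d-1)^{-h^\psi(u')}\,d\beta$ to turn each one-dimensional factor into an exact geometric computation in the valley index, and close with AM--GM for the $s^k$ polynomial factor. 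What your route buys is transparency and a sharper bound ($-s/2$ rather than $-s/4$), at the cost of an explicit harmonic-measure computation; the paper's route avoids the explicit integral and the basepoint-change bookkeeping, at the cost of a looser exponent coming from the case split. Both rest on the same product factorization of $\mu$ and on the decay $\beta(\{j_\psi=j\})\in\Theta((d-1)^{-j})$, so the arguments are close cousins, but yours replaces the paper's "partition, bound, and count" scheme with a direct evaluation that is self-contained and quantitatively stronger.
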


\begin{proof}
    By $H$-invariance of $\mu$, we may assume that $u=\bf o$.
    Write $v=(v_1,\dots,v_k)$, $\|v_i\|=\dist(v_i,o)$ for every $1\le i\le k$ and $\|v\|=\sum_{i=1}^k\|v_i\|=\dist({\bf o},v)$.
    For every $0\le j\le \|v_i\|$ define
    $$A_{i,j}=\{\psi\in \partial \mathbb{T}_d:h^\psi(v_i)=\|v_i\|-2j\}.$$
    Observe that $\beta(A_{i,j})\in \Theta\left((d-1)^{-j}\right)$ as $j\to\infty$ uniformly in $i$, where $\beta$ is the harmonic measure on $\partial \mathbb{T}_d$.

    Given a sequence $J=(j_1,\dots,j_k)$ such that $0\le j_i\le \|v_i\|$, let
    $$m_J:= \min \left\{m, m -\Vert v \Vert + 2\sum_{i=1}^k j_i\right\}.$$
    By Theorem~\ref{thm:SubseqLimit}~(iii), we have that
    \begin{equation}\label{eq:Measure}
    \mu\left(\left(\prod_{i=1}^k A_{i,j_i}\right)\times \{\ell:\ell\le m_J\}\right)\in O\left((d-1)^{m_J-\sum_{i=1}^k j_i}\right)
    \end{equation}
    as $-\left(m_J-\sum_{i=1}^k j_i\right)\to\infty$.
    We claim that the right-hand side of \eqref{eq:Measure} can be replaced by $O((d-1)^{-\Vert v\Vert/4})$ as $\Vert v \Vert\to\infty$. 
    Indeed, either $\sum_{i=1}^k j_i\ge \|v\|/4$, or $\sum_{i=1}^k j_i< \|v\|/4$ in which case we have $m_J\le -\|v\|/2+m$.
    
    Finally, observe that for every $(\psi,p)\in (\partial\mathbb{T}_d)^k\times \mathbb{Z}$ that satisfies
    $$h^\psi({\bf o})+p,  \  h^\psi(v)+p\le m$$
    there is a unique sequence $J=(j_1,\dots,j_k)$ such that $0\le j_i\le \|v_i\|$ and $(\psi,p)\in \left(\prod_{i=1}^k A_{i,j_i}\right)\times \{\ell:\ell\le m_J\}$.
    The claim follows from the estimate above together with the fact that there are at most $(\|v\|+1)^k$ such sequences $J$.
\end{proof}

\begin{proof}[Sketch of the Proof of Theorem~\ref{thm:UnboundedTouching}]
    Proposition~\ref{prop:UnboundedStab} guarantees that two points chosing according to $\mu\times \mu$ in ${\bf D}$ have unbounded stabilizer under the action of $H$.
    Lemma~\ref{lm:HoweMoore} is a special case of the Howe--Moore theorem; it states that for fixed $m\in \mathbb{Z}$ the event that the values of $f\in {\bf D}$ at $v,w\in G$ are bounded by $m$ decays exponentially in $\dist(v,w)$. The combination of these two results implies Theorem~\ref{thm:UnboundedTouching} via applications of the Mecke formula and Kochen--Stone theorem, as in \cite[Section~6]{FMW23}.
\end{proof}


\subsection{Convergence of diagrams} \label{sec:ConvergenceDiagrams}

Recall the definition of the Voronoi diagram $\mathrm{Vor}(\mathbf{X}^{(\lambda)})=\big\{C_1^{(\lambda)},C_2^{(\lambda)},\ldots\big\}$ from~\eqref{def:VoronoiDiagram}, which gives a partition 
$$
G=\bigsqcup_{i\in \mathbb{N}} C^{(\lambda)}_i.
$$
We assign to $\mathbf{X}^{(\lambda)}$ a subset $P(\mathbf{X}^{(\lambda)}) \in \{0,1\}^{E(G)}$ of edges which encodes this partition by declaring
$$[v,w]\in P(\mathbf{X}^{(\lambda)}) \text{ if and only if } [v,w]\in E(G) \text{ and } C^{(\lambda)}(v)=C^{\lambda}(w),$$
where we recall that $C^{(\lambda)}(v)$ is the Voronoi cell that contains $v$.
Define $\mathcal{P}^{(\lambda)}$ to be the distribution on $\{0,1\}^{E(G)}$ induced by the assignment $\mathrm{Vor}(\mathbf{X}^{(\lambda)})\mapsto P(\mathbf{X}^{(\lambda)})$. 

As $\{0,1\}^{E(G)}$ is a compact metric space and by Theorem~\ref{thm:SubseqLimit}, every sequence $\lambda_n\to 0$ has a subsequence $\lambda_{n_k}\to 0$ such that $\mu_{\lambda_{n_k}}\xrightarrow{v} \mu$ and $\mathcal{P}^{(1-e^{-\lambda_{n_k}})}\xrightarrow{v} \mathcal{P}$ for some distribution $\mathcal{P}$ on $\{0,1\}^{E(G)}$.
Next, we show in Theorem~\ref{thm:Diagrams} that $\mathcal P$ coincides with the distribution obtained by encoding the ideal Poisson-Voronoi diagram that corresponds to $\mu$.
As the proof of Theorem~\ref{thm:Diagrams} is standard, we only include it in Appendix~\ref{app:Trees}.

Suppose that $\lambda_n\to 0$ is such that $\mu_{\lambda_{n_k}}\xrightarrow{v} \mu$.
Let ${\bf X}=\{(X_1,Y_1),(X_2,Y_2),\dots\}$, where $\{X_1,X_2,\dots\}$ is the Poisson point process on ${\bf D}$ with intensity $\mu$ that is ordered by $X_1(o)\le X_2(o)\le \dots$ and $\{Y_1,Y_2,\dots\}$ are iid ${\rm Unif}[0,1]$-labels.
By Theorem~\ref{thm:SubseqLimit}~(iii), we have that a.s.
$$c_v\coloneqq \min\{X_i(v):i\in \mathbb{N}\}\not=-\infty \text{ and } |\{i\in \mathbb{N}:X_i(v)=c_v\}|<\infty$$
for every $v\in G$.
We may thus define the \emph{ideal Voronoi diagram} 
$$
\mathrm{Vor}(\mathbf{X})=\big\{C_1,C_2,\ldots\big\},
$$
where $v\in C_i$ if $c_v=X_i(v)$ and $Y_i$ is minimal among $\{Y_j:c_v=X_j(v)\}$. Let $\mathcal{P}^\mu$ be the distribution on $\{0,1\}^{E(G)}$ induced by the map $\mathbf{X}\mapsto P(\mathbf{X})\in \{0,1\}^{E(G)}$ that encodes the partition $G=\bigsqcup_{i\in \mathbb{N}} C_i$ defined as above.
Similarly, we define $\mathcal{P}^{\mu_{\lambda_n}}$ for every $n\in \mathbb{N}$.

The proof of the following result can be found in Appendix~\ref{app:Trees}.

\begin{theorem}\label{thm:Diagrams}
    Let $\lambda_n\to 0$ be such that $\mu_{\lambda_n}\xrightarrow{v} \mu$ and $\mathcal{P}^{(1-e^{-\lambda_{n}})}\xrightarrow{v} \mathcal{P}$.
    Then $\mathcal{P}=\mathcal{P}^\mu$.        
\end{theorem}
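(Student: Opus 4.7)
The plan is to realize both the Bernoulli--Voronoi partitions at parameter $1-e^{-\lambda_n}$ and the ideal partition as deterministic functionals of Poisson point processes on $\mathbf D\times[0,1]$, and then transport the hypothesized vague convergence of intensities into convergence of the partition encodings. For each $n$, let $\Xi_n$ be a Poisson point process on $G$ with intensity $\lambda_n\cdot\nu$ equipped with iid $\mathrm{Unif}[0,1]$-labels. Since the number of $\Xi_n$-points at each vertex is $\mathrm{Poisson}(\lambda_n)$-distributed and independent across vertices, the set of vertices carrying at least one point has the law of $(1-e^{-\lambda_n})$-Bernoulli site percolation; moreover duplicate points at the same vertex do not alter the Voronoi partition, so the encoding has distribution $\mathcal{P}^{(1-e^{-\lambda_n})}$. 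Because $\iota_{t_{\lambda_n}}(a)(v)=\mathrm{dist}(v,a)-t_{\lambda_n}$ differs from $\mathrm{dist}(v,a)$ only by a global additive constant which cancels in every argmin defining cell allocation, the labeled pushforward $\tilde\Xi_n\coloneqq(\iota_{t_{\lambda_n}},\mathrm{id})_*\Xi_n$, a Poisson point process on $\mathbf D\times[0,1]$ with intensity $\mu_{\lambda_n}\otimes\mathrm{Leb}$, encodes the same partition via exactly the same formula used to define $\mathcal P^\mu$ from the limiting Poisson process $\tilde\Xi$ of intensity $\mu\otimes\mathrm{Leb}$.

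It suffices to prove convergence of all finite-dimensional marginals of the encoding, since $\{0,1\}^{E(G)}$ carries the product topology. Fix a finite $F\subset E(G)$ and let $S\subset G$ be its (finite) vertex set. The marginal $(P(\tilde\Xi)_e)_{e\in F}$ depends on $\tilde\Xi$ only through the labeled evaluation pushforward $\Phi_S\colon\mathbf D\times[0,1]\to\mathbb Z^S\times[0,1]$, $(f,y)\mapsto((f(u))_{u\in S},y)$. This map is continuous by definition of the pointwise topology on $\mathbf D$, and proper: the preimage of any finite subset of $\mathbb Z^S\times[0,1]$ lies in a finite union of the compact sets $\{f\in\mathbf D:f(o)=\ell\}\times[0,1]$, each compact because $1$-Lipschitzness of $f\in\mathbf D$ restricts $f$ at each vertex to a bounded range. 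By continuity and properness, $(\Phi_S)_*(\mu_{\lambda_n}\otimes\mathrm{Leb})\xrightarrow{v}(\Phi_S)_*(\mu\otimes\mathrm{Leb})$ on $\mathbb Z^S\times[0,1]$, and by standard Poisson convergence theory the labeled pushforward processes $(\Phi_S)_*\tilde\Xi_n$ converge in distribution to $(\Phi_S)_*\tilde\Xi$ in the vague topology on counting measures.

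It remains to show that the encoding functional $\tilde\Xi\mapsto (P(\tilde\Xi)_e)_{e\in F}$ is $\tilde\Xi$-a.s.\ continuous, after which the continuous mapping theorem completes the proof. The key input is a uniform sub-level tail estimate: combining Theorem~\ref{thm:SubseqLimit}(iii), $\lambda_n\in\Theta(\nu(B_{t_{\lambda_n}}(o))^{-1})$, and \eqref{eq:Growth}, one checks that $\mu_{\lambda_n}(\{f:f(u)\leq m\})=\lambda_n|B_{m+t_{\lambda_n}}(u)|\in\Theta((d-1)^m)$ uniformly in $n$, with the same leading-order behavior as $\mu(\{f:f(u)\leq m\})$. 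Consequently, for every $\varepsilon>0$ there exist $L<M\in\mathbb Z$ such that, with probability $\geq 1-\varepsilon$ uniformly for $\tilde\Xi$ and for $\tilde\Xi_n$ with $n$ sufficiently large, no point has $f(u)<L$ for any $u\in S$, each $c_u=\min_i X_i(u)$ is attained at some index with $X_i(u)\leq M$, and the lexicographic minimum for each $u\in S$ is uniquely attained (using a.s.~absence of ties in the iid continuous labels). On this event the encoding is determined by the restriction of the process to the clopen compact strip $\Phi_S^{-1}([L,M]^S\times[0,1])$, on which vague convergence together with a Skorokhod coupling upgrades to equality of finite labeled point collections for $n$ large, yielding the required continuity. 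The delicate point is precisely the uniform sub-level estimate above, which prevents Poisson mass from escaping to $-\infty$ in the $f(o)$-direction along the subsequence despite $\mathbf D$ being non-compact; everything else is standard.
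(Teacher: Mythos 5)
Your proof is correct and takes a genuinely different, more abstract route for the convergence step. You and the paper agree on the first reduction: pass to a Poisson process of intensity $\lambda_n\nu$ on $G$ with iid labels (equivalently, push forward to a Poisson process of intensity $\mu_{\lambda_n}\otimes\mathrm{Leb}$ on $\mathbf D\times[0,1]$) and observe that forgetting multiplicities recovers $\mathcal P^{(1-e^{-\lambda_n})}$; this is exactly Claim~B.1 in the paper's proof. Where you diverge is in proving that $\mathcal P^{\mu_{\lambda_n}}\to\mathcal P^\mu$. The paper's argument is hands-on: for a cylinder event determined on $B_R(o)$ it enumerates finitely many clopen ``type events'' $\mathcal A(\sigma,k,F_1,\dots,F_\ell)$, indexed by the restriction of the first $\ell$ points to $B_R(o)$, a truncation value $k>m+2R$ for $X_{\ell+1}(o)$, and the permutation $\sigma$ of the labels; each such event has convergent probability by vague convergence and the cylinder probability is approximated by the finite sum. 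Your argument is softer: push forward along the evaluation map $\Phi_S$, use continuity and properness to transfer vague convergence of intensities to convergence of Poisson laws, and then establish a.s.\ continuity of the encoding functional via a compact-strip truncation plus Skorokhod coupling. Both are finite-window arguments at heart; the paper's is self-contained and explicit, whereas yours is shorter once you invoke the continuous mapping theorem but requires verifying the properness, the persistence of the Poisson structure under $\Phi_S$, and the a.s.\ continuity carefully.

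Two small points you should tighten. First, the strip $\Phi_S^{-1}([L,M]^S\times[0,1])$ is slightly too narrow: on the good event $\{c_u\le M,\ \forall u\in S\}$, an argmin witness $i$ for some $u\in S$ satisfies $X_i(u)\le M$ but only $X_i(u')\le M+\mathrm{diam}_G(S)$ for other $u'\in S$ by $1$-Lipschitzness, so you should take the window $[L,M+\mathrm{diam}_G(S)]^S$ (the paper's $m+2R$ truncation encodes the same correction). Second, the assertion that $\mu_{\lambda_n}(\{f:f(u)\le m\})\in\Theta((d-1)^m)$ ``uniformly in $n$'' needs a little care since the implied constants from \eqref{eq:Growth} involve the factor $(1+m/t_{\lambda_n})^{k-1}$; what you actually need, and what does hold, is that for each fixed $m$ the quantity is bounded over $n$ and tends to zero as $m\to-\infty$ uniformly over all sufficiently large $n$, which suffices for the tightness you use. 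Neither issue is a gap in the method, only in the bookkeeping.
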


We say that two disjoint sets $C,D\subseteq G$ are \emph{neighbors} if there are $v\in C$, $w\in D$ that form an edge $[v,w]$ in $G$.

\begin{proposition}\label{pr:CellsAreCells}
    Let $\lambda_n\to 0$ be such that $\mu_{\lambda_n}\xrightarrow{v} \mu$.
    Then a.s. $C_i$ is non-empty and connected for every $i\in \mathbb{N}$, and $C_i$ and $C_j$ are neighbors for every $i\not=j\in \mathbb{N}$. 
\end{proposition}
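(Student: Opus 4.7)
The plan is to deduce all three assertions from Theorem~\ref{thm:UnboundedTouching} together with an elementary monotonicity lemma for horofunctions on $(\mathbb{T}_d)^k$. The preparatory observation is that, by Theorem~\ref{thm:SubseqLimit}~(iii) and the non-atomicity of the harmonic measure $\beta$, almost surely no two distinct points of $\mathbf{X}$ share any boundary coordinate: $\psi_{i,l} \ne \psi_{j,l}$ for all $i \ne j$ and all $l \in \{1,\dots,k\}$, where I write $X_i = h^{\psi_i}+m_i$ with $\psi_i = (\psi_{i,1},\dots,\psi_{i,k}) \in (\partial \mathbb{T}_d)^k$ and $m_i \in \mathbb{Z}$. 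I would argue on this full-measure event throughout.

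The workhorse is the following observation, which I would verify by a direct case analysis in a single tree. If $v^+$ is obtained from $v \in G$ by moving coordinate $v_l$ one step toward $\psi_{i,l}$, then $X_i(v^+) = X_i(v) - 1$, while $X_k(v^+) - X_k(v) \in \{-1,+1\}$ with value $+1$ precisely when $\psi_{k,l} \ne \psi_{i,l}$ and $v_l$ lies on the bi-infinite geodesic $[\psi_{i,l},\psi_{k,l}]$. Hence $X_k - X_i$ changes by $0$ or $+2$ under any step toward $\psi_i$, so such steps preserve membership in $C_i$; the minor issue of tie-breaking by the labels $Y_i$ is handled by the same bound.

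For non-emptiness of $C_i$, I would iterate this monotone move starting from an arbitrary base vertex. The lemma handles the infinitely many $k$ with $X_k(o) > X_i(o)$ automatically, and only finitely many $k$ satisfy $X_k(o) \le X_i(o)$ (because $X_1(o) \le X_2(o) \le \dots$); for these finitely many indices, moving deep enough toward $\psi_i$ in every coordinate sends $X_i \to -\infty$ and, using $\psi_{k,l} \ne \psi_{i,l}$ in every coordinate, drives each $X_k \to +\infty$, yielding a vertex in $C_i$. For connectedness, given $v,w \in C_i$, I move each coordinate of $v$ and of $w$ toward $\psi_{i,l}$; since in $\mathbb{T}_d$ the rays $v_l \to \psi_{i,l}$ and $w_l \to \psi_{i,l}$ coincide from some point onward, after finitely many steps the two trajectories meet at a common vertex $z$, and by the monotonicity lemma both paths lie in $C_i$, so concatenating gives a path in $C_i$ from $v$ to $w$.

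The main obstacle is the neighboring claim, for which I would use Theorem~\ref{thm:UnboundedTouching}. Fix $i \ne j$ and any coordinate $l$ (so $\psi_{i,l} \ne \psi_{j,l}$), pick any $R > 1$, and choose a vertex $v$ in the infinite wall $W(R, \mathbf{X}, X_i, X_j)$ with, say, $v \in C_i$. Letting $T_0 \coloneqq \operatorname{dist}(v_l, \pi_l(v_l))$, where $\pi_l(v_l)$ denotes the projection of $v_l$ onto the bi-infinite geodesic $[\psi_{i,l},\psi_{j,l}]$ in $\mathbb{T}_d$, I consider the path $v = v^{(0)}, v^{(1)}, \dots, v^{(T_0+1)}$ that repeatedly advances coordinate $l$ by one step toward $\psi_{j,l}$. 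For $t \le T_0$ the step is simultaneously toward $\psi_{i,l}$ and $\psi_{j,l}$, so $X_i(v^{(t)}) = X_i(v) - t$ and $X_j(v^{(t)}) = X_j(v) - t$; at step $T_0+1$ the path has crossed onto the geodesic and moves strictly toward $\psi_{j,l}$, so $X_i$ increases by $1$ and $X_j$ decreases by $1$. Combining this with the $1$-Lipschitz bound $X_k(v^{(t)}) \ge X_k(v) - t > X_i(v) + R - t$ for every $k \ne i,j$ (valid because only coordinate $l$ changes), one verifies that $v^{(T_0)} \in C_i$ and $v^{(T_0+1)} \in C_j$ as soon as $R > 1$, so the edge $[v^{(T_0)}, v^{(T_0+1)}]$ realizes the desired neighboring. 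The subtle point is ruling out a third cell $C_k$ from intervening along the (possibly long) path: this works only because a single coordinate is modified, keeping $X_k$ within $t$ of its initial value while the wall margin $R$ suffices to dominate the $+1$ change suffered by $X_j - X_k$ at the final step.
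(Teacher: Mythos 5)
Your proposal is correct and follows essentially the same route as the paper's proof: the same full-measure event (no two $X_i,X_j$ share a boundary coordinate, by non-atomicity of $\beta$ and Theorem~\ref{thm:SubseqLimit}), the same monotonicity observation that a step toward $\psi_i$ changes $X_k-X_i$ by $0$ or $+2$ (this is the paper's property~(c) for paths approaching $X_i$), the same ray-merging argument for connectedness, and the same use of the $R$-wall from Theorem~\ref{thm:UnboundedTouching} for the neighboring claim. The one small presentational difference is in the neighboring step: you track the distance $T_0$ from $v_l$ to the geodesic $[\psi_{i,l},\psi_{j,l}]$ and compute the exact edge $[v^{(T_0)},v^{(T_0+1)}]$ where the cell changes, whereas the paper first proves two auxiliary path properties~(a),(b) (that any path approaching $X_i$ stays in $\bigcup_{j\in F^i_{v_1}}C_j$ and eventually enters $C_i$) and then locates the first transition index $\ell_0$; your version is a little more explicit but the underlying mechanism is identical, and both hinge on the wall margin $R>1$ dominating the accumulated $\pm1$ changes in the $X_k$, $k\ne i,j$.
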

\begin{proof}
    By Theorem~\ref{thm:SubseqLimit}, we have that a.s. $X_j\in (\partial \mathbb{T}_d)^k\times \mathbb{Z}$ for every $j\in \mathbb{N}$ and $|F^i_v|<\infty$ for every $v\in G$ and $i\in \mathbb{N}$, where $F^i_v=\{j\in \mathbb{N}:X_j(v)\le X_i(v)\}$.
    
    Let $i\in \mathbb{N}$, we say that a path $(v_\ell)_\ell$ in $G$ \emph{approaches $X_i$} if $X_i(v_{\ell+1})=X_i(v_\ell)-1$.
    Note that this notion is much weaker than convergence.
    We claim that the following holds a.s. for every path $(v_\ell)_\ell$ that approaches $X_i$:
    \begin{itemize}
        \item [(a)] $\{v_\ell\}_\ell\subseteq \bigcup_{j\in F^i_{v_1}} C_j$,
        \item [(b)] there is $\ell_0\in \mathbb{N}$ such that $\{v_\ell\}_{\ell\ge \ell_0}\subseteq C_i$,
        \item [(c)] if $v_1\in C_i$, then $\{v_\ell\}_\ell\subseteq C_i$. 
    \end{itemize}
    To see (a), take any $m\in \mathbb{N}$ such that $X_i(v_1)<  X_m(v_1)$ and $\ell\in \mathbb{N}$.
    Then we have that $X_m(v_\ell)\ge X_m(v_1)-\ell+1$ as $X_m$ is $1$-Lipschitz.
    By the assumption, we have that $X_i(v_\ell)=X_i(v_1)-\ell+1$, which shows (a).
    Property (c) is proven along the same lines using additionally the fact that $Y_i$ is minimal among $\{Y_j:j\in F^i_{v_1}\}$.
    To see (b), first observe that by the definition of the graph product, we have that $v_\ell$ and $v_{\ell+1}$ differ in a single coordinate in $G$.
    Let $1\le k'\le k$ be such that the set $A_{k'}$ of those $\ell\in \mathbb{N}$ such that $v_{\ell}$ and $v_{\ell+1}$ differ in coordinate $k'$ is infinite.
    For $j\in F^i_{v_1}$ we write $\psi^j_{k'}$ for the element of $\partial \mathbb{T}_d$ that represents the $k'$th coordinate of $X_j$.
    By Theorem~\ref{thm:SubseqLimit}, we have that a.s. $\psi^j_{k'}\not=\psi^i_{k'}$ whenever $i\not= j$.
    This implies that $X_j(v_{\ell})\to \infty$ as $\ell\to\infty$ as $X_j(v_\ell)+1=X_{j}(v_{\ell+1})$ for every large enough $\ell\in A_{k'}$.
    Combined with (a), we see that (b) holds.
    
    Now the proof can be finished as follows.
    It follows from (b) that $C_i\not=\emptyset$ for every $i\in \mathbb{N}$.
    Given $v,w\in C_i$, there are paths $(v_\ell)_\ell$ and $(w_m)_m$ that start at $v$ and $w$ respectively and approach $X_i$ with the property that there are $\ell_0,m_0\in \mathbb{N}$ such that $v_{\ell_0+n}=w_{m_0+n}$ for every $n\in \mathbb{N}$.
    Indeed, let $r=(r_1,\dots,r_k)\in (\mathbb{T}_d)^k$ be such that $r_j$ separates $\{X_i\}$ from $\{v^j,w^j\}$ for every $1\le j\le k$, where $v=(v^1,\dots,v^k)$ and $w=(w^1,\dots, w^k)$.
    Note that such $r$ exists as we work in $(\mathbb{T}_d)^k$.
    Let $\ell_0=\dist(v,r)$, and define a path $(v_\ell)_\ell$ such that $\dist(v_p,r)>\dist(v_{p+1},r)$ for every $0\le p< \ell_0$ and $(v_\ell)_{\ell\ge \ell_0}$ is a fixed path that starts at $r$ and approaches $X_i$.
    By the definition of $r$, we have that $(v_\ell)_\ell$ approaches $X_i$ as well.
    Analogous construction for $w$ gives the claim.
    It follows from (c) that $v,w$ are connected with a path within $C_i$.

    For the second claim of the proposition, use Theorem~\ref{thm:UnboundedTouching} to find $v\in W^{\bf X}_2(X_i,X_j)$.
    This implies that $F^i_v=\{i,j\}$.
    Without loss of generality we may assume that $v\in C_j$.
    Let $(v_\ell)_\ell$ be a path in $G$ that starts at $v$ and approches $X_i$.
    It follows from (a) and (b) that there is $\ell_0>1$ such that $v_{\ell_0}\in C_i$ and $\{v=v_1,\dots,v_{\ell_0}\}\subseteq C_i\cup C_j$.
\end{proof}

Theorem~\ref{thm:Diagrams} allows to formulate a small intensity analogue of Proposition~\ref{pr:CellsAreCells}.

\begin{theorem}\label{thm:LocalTouching}
    Let $\epsilon,R>0$.
    Then there is $\lambda_0>0$ such that
    \begin{align*} 
    & \inf_{0<\lambda\le \lambda_0} \mathbb{P}\left(C^{(\lambda)}_i\cap B_{R}(o)\not=\emptyset\not=C^{(\lambda)}_j\cap B_{R}(o) \ \Rightarrow \ C^{(\lambda)}_i \text{ and } C^{(\lambda)}_j\text{ are neighbors}\right) \\
    & \qquad >1-\epsilon.
    \end{align*}
\end{theorem}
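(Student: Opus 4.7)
\textbf{The plan} is to argue by contradiction, leveraging the convergence result in Theorem~\ref{thm:Diagrams} together with the unbounded borders phenomenon for the ideal Voronoi diagram established in Proposition~\ref{pr:CellsAreCells}. Suppose the statement fails: then there exist $\epsilon, R > 0$ and a sequence of Bernoulli intensities $\lambda_n \to 0$ such that the ``good'' event $A_n$ — namely, every two distinct cells $C_i^{(\lambda_n)}, C_j^{(\lambda_n)}$ intersecting $B_R(o)$ are neighbors — satisfies $\mathbb{P}(A_n^c) \ge \epsilon$ for every $n$. Set $\alpha_n \coloneqq -\log(1-\lambda_n) \to 0$, so that $\mathcal{P}^{(\lambda_n)} = \mathcal{P}^{(1-e^{-\alpha_n})}$. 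Theorem~\ref{thm:SubseqLimit} applied to $(\mu_{\alpha_n})_n$, combined with the compactness of probability measures on the compact metric space $\{0,1\}^{E(G)}$, lets me extract a subsequence along which $\mu_{\alpha_n} \xrightarrow{v} \mu$ and $\mathcal{P}^{(\lambda_n)} \xrightarrow{v} \mathcal{P}$ simultaneously; Theorem~\ref{thm:Diagrams} then identifies $\mathcal{P} = \mathcal{P}^\mu$.

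\textbf{The key step} is to identify the good event as an open subset of $\{0,1\}^{E(G)}$. Each configuration $P \in \{0,1\}^{E(G)}$ encodes a partition of $V(G)$ into $P$-connected components. Since Bernoulli/ideal Voronoi cells are connected subgraphs of $G$ and $P$ contains exactly the edges whose endpoints lie in the same cell, $u$ and $v$ share a cell if and only if $u \stackrel{P}{\sim} v$. The event $\{u \stackrel{P}{\sim} v\}$ is the countable union, over paths $\pi$ in $G$ from $u$ to $v$, of the cylindrical events $\{P \supseteq \pi\}$, hence open. For $u \ne v$, the event ``the $P$-components of $u$ and $v$ are neighbors'' can be written as
\[
\bigcup_{\substack{u', v' \in V(G) \\ [u', v'] \in E(G)}} \bigl\{ P \;:\; u \stackrel{P}{\sim} u' \text{ and } v \stackrel{P}{\sim} v' \bigr\},
\]
again a countable union of open sets, hence open. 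Taking for each unordered pair $\{u,v\} \subseteq B_R(o)$ the disjunction of these two open events and then the finite intersection over all such pairs exhibits the good event $A$ as an open subset of $\{0,1\}^{E(G)}$ whose indicator agrees with the one inside the probability in the theorem statement.

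\textbf{The conclusion} is then a matter of applying the Portmanteau theorem. By Proposition~\ref{pr:CellsAreCells}, $\mathcal{P}^\mu$-almost every configuration has all its cells non-empty, connected, and pairwise neighbors, so $\mathcal{P}^\mu(A) = 1$. The lower semi-continuity bound for open sets under the weak convergence $\mathcal{P}^{(\lambda_n)} \xrightarrow{v} \mathcal{P}^\mu$ yields
\[
\liminf_{n \to \infty} \mathbb{P}(A_n) = \liminf_{n \to \infty} \mathcal{P}^{(\lambda_n)}(A) \ge \mathcal{P}^\mu(A) = 1,
\]
contradicting $\mathbb{P}(A_n^c) \ge \epsilon$. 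I expect the main subtlety to be the openness verification — reconciling the apparently global nature of cell adjacency with a countable union of cylindrical events — but this dissolves once one uses the product-topology description of $P$-connectivity on $\{0,1\}^{E(G)}$; the remaining steps are then routine compactness and Portmanteau arguments.
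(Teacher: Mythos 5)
Your proof is correct, and it follows the same overall plan as the paper's: argue by contradiction, pass to a subsequence where both $\mu_{\alpha_n}\to\mu$ and $\mathcal P^{(\lambda_n)}\to\mathcal P$, identify $\mathcal P=\mathcal P^\mu$ by Theorem~\ref{thm:Diagrams}, invoke Proposition~\ref{pr:CellsAreCells} to see that $\mathcal P^\mu$ gives full mass to the "good" event, and transfer this back. Where you depart from the paper is in the final topological step: the paper introduces a truncated, \emph{clopen} event $A(S)$ (witness paths are required to live in $B_{R+S}(o)$), chooses $S$ large so that $\mathcal P^\mu(A(S))>1-\epsilon/2$, and then uses plain convergence of probabilities on clopen cylinders; you instead observe that the full good event $A$ is \emph{open} in $\{0,1\}^{E(G)}$ (a finite intersection over pairs in $B_R(o)$ of countable unions of cylinder events) and apply the lower semi-continuity half of the Portmanteau theorem directly. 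The two are interchangeable — the paper's $A(S)$ exhausts your $A$ as $S\to\infty$ — but your version is marginally tidier since it removes the auxiliary parameter $S$, at the modest cost of invoking the Portmanteau inequality for open sets rather than exact convergence on clopen sets. One point worth making explicit, which you handle correctly but only implicitly: the identification of the $P$-components of $P(\mathbf X)$ with the Voronoi cells uses that cells are connected (for the Bernoulli–Voronoi diagram this follows from the tie-breaking rule in Section~\ref{sec:model}, and for the ideal diagram from Proposition~\ref{pr:CellsAreCells}); without connectedness of cells the encoding $P$ could split a cell into several $P$-components and the equivalence between the event in the theorem and your event $A$ would fail.
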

\begin{proof}
    Suppose for a contradiction that there is $\epsilon,R>0$ and a sequence $\lambda_n\to 0$ such that
    $$\mathbb{P}\left(C^{(\lambda_n)}_i\cap B_{R}(o)\not=\emptyset\not=C^{(\lambda_n)}_j\cap B_{R}(o) \ \Rightarrow \ C^{(\lambda_n)}_i \text{ and } C^{(\lambda_n)}_j\text{ are neighbors}\right)\le 1-\epsilon$$
    holds for every $n\in \mathbb{N}$.
    Define $\nu_n=-\log(1-\lambda_n)$ and note that $\nu_n\to 0$.
    After possibly passing to a subsequence, we may assume that
    $$\mu_{\nu_n}\xrightarrow{v} \mu \text{ and } \mathcal{P}^{(\lambda_n)}=\mathcal{P}^{(1-e^{-\nu_n})}\xrightarrow{v} \mathcal{P}.$$
    Using Theorem~\ref{thm:Diagrams}, we know that $\mathcal{P}=\mathcal{P}^\mu$.
    For $S>0$, we define the event $A(S)\subseteq \{0,1\}^{E(G)}$, where $\mathcal{E}\in A(S)$ if for all $v,w\in B_{R}(o)$ there is a path in $B_{R+S}(o)$ in $G$ connecting $v$ and $w$ such that at most one edge $e$ on this path satisfies $\mathcal{E}(e)=0$.
    By Proposition~\ref{pr:CellsAreCells}, we find $S>0$ such that
    $$\mathcal{P}(A(S))>1-\epsilon/2.$$
    As $\mathcal{P}^{(\lambda_{n})}\xrightarrow{v} \mathcal{P}$ and $A(S)$ is clopen, we must have that
    $$\mathcal{P}^{(\lambda_n)}(A(S))>1-\epsilon$$
    for some $n\in \mathbb{N}$.
    This is a contradiction as conditioned on $A(S)$ all cells that intersect $B_R(o)$ are neighbors.
\end{proof}

We finish this subsection by the following intuitively clear result.

\begin{lemma}\label{lem:NumberOfCells}
    For every $\epsilon,K>0$ there is $R>0$ and $\lambda_0>0$ such that 
    $$\inf_{0<\lambda\le \lambda_0}\mathbb{P}(\#\text{ cells } C^{(\lambda)}_i \text{ intersecting } B_R(o)>K )>1-\epsilon$$
\end{lemma}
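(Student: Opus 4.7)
My strategy mirrors the contradiction argument of the proof of Theorem~\ref{thm:LocalTouching}. The first step is to establish the analogous statement in the ideal tessellation: by Theorem~\ref{thm:SubseqLimit}~(iii), $\theta(m)=\theta(0)(d-1)^m>0$ for every $m\in\mathbb{Z}$, so the Poisson point process $\mathbf{X}$ on $\mathbf{D}$ with intensity $\mu$ has infinitely many points almost surely. Combined with Proposition~\ref{pr:CellsAreCells} (every ideal cell $C_i$ is non-empty) and the fact that the $C_i$ partition $G=\bigcup_{R\ge 1}B_R(o)$, monotone convergence yields $N_R\coloneqq|\{i:C_i\cap B_R(o)\ne\emptyset\}|\nearrow\infty$ almost surely. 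Hence for any $K,\epsilon>0$ one may fix $R_0$ with $\mathbb{P}^\mu(N_{R_0}>K)>1-\epsilon/3$.

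Supposing for contradiction that the lemma fails for this $K$ and $\epsilon$, a standard diagonal argument using the monotonicity $N_R\le N_{R'}$ for $R\le R'$ extracts a sequence $\lambda_n\to 0$ along which $\mathbb{P}^{(\lambda_n)}(N_R>K)\le 1-\epsilon$ holds for every fixed $R$ and all sufficiently large $n$; in particular this holds for $R=R_0$. After passing to a subsequence, Theorem~\ref{thm:SubseqLimit} and Theorem~\ref{thm:Diagrams} allow us to assume $\mu_{\nu_n}\xrightarrow{v}\mu$ (with $\nu_n=-\log(1-\lambda_n)$) and $\mathcal{P}^{(\lambda_n)}\xrightarrow{v}\mathcal{P}^\mu$.

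The main obstacle, and the reason the clopen-event trick underlying Theorem~\ref{thm:LocalTouching} does not apply verbatim, is that the event $\{N_{R_0}>K\}$ decomposes as a finite union over $(K+1)$-subsets of $B_{R_0}(o)$ of intersections $\bigcap_{i\ne j}\{v_i\not\leftrightarrow_P v_j\}$, each a countable intersection of clopen sets (one per finite path in $G$); consequently it is only \emph{closed} in the product topology on $\{0,1\}^{E(G)}$, and weak convergence of $\mathcal{P}^{(\lambda_n)}$ supplies only the wrong-direction inequality $\limsup_n\mathcal{P}^{(\lambda_n)}(N_{R_0}>K)\le\mathcal{P}^\mu(N_{R_0}>K)$. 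To circumvent this, I would work at the level of the driving point processes: the Poisson approximation for Bernoulli thinnings, together with $\mu_{\nu_n}\xrightarrow{v}\mu$, gives $\mathbf{X}^{(\lambda_n)}\to\mathbf{X}$ in distribution vaguely on $\mathbf{D}$. For $L>0$ large, the set $K_L\coloneqq\{f\in\mathbf{D}:f(o)\le L\}$ is compact with $\mu(K_L)<\infty$ by Theorem~\ref{thm:SubseqLimit}~(iii); on the event $A_L$ that every $v\in B_{R_0}(o)$ admits some $X_j\in\mathbf{X}$ with $X_j(v)\le L-R_0$, the Voronoi diagram restricted to $B_{R_0}(o)$ is determined by $\mathbf{X}\cap K_L$, since any $X_j$ attaining $X_j(v)\le L-R_0$ automatically lies in $K_L$. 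Direct estimates based on $\lambda\in\Theta(t_\lambda^{-k+1}(d-1)^{-t_\lambda})$ show $\mathbb{P}(A_L)\to 1$ as $L\to\infty$, uniformly in small $\lambda$. On $A_L$, $N_{R_0}$ coincides with a functional of the marked configuration $(\mathbf{X},Y)\cap(K_L\times[0,1])$ that is continuous away from tied configurations, with ties resolved by the iid $\mathrm{Unif}[0,1]$ labels, and weak convergence of the restricted marked point processes then yields $\mathbb{P}^{(\lambda_n)}(N_{R_0}>K)\to\mathbb{P}^\mu(N_{R_0}>K)>1-\epsilon/3$, contradicting the standing assumption. This final transfer via point-process convergence, which must replace the edge-process convergence of Theorem~\ref{thm:Diagrams}, is the delicate step.
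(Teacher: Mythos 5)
Your proposal correctly identifies the central technical obstacle: the event $\{N_{R_0}>K\}$ is only closed in the product topology on $\{0,1\}^{E(G)}$, so the Portmanteau inequality $\limsup_n\mathcal P^{(\lambda_n)}(N_{R_0}>K)\le\mathcal P^\mu(N_{R_0}>K)$ goes the wrong way. But your proposed remedy — retreating all the way to the driving point processes on $\mathbf D$, truncating to compact sets $K_L$, establishing vague convergence of the (marked) processes, and arguing almost-sure continuity of the cell-count functional away from tied labels — is substantially heavier than the paper's argument and, as you note yourself, leaves the key transfer step as a ``delicate'' assertion. In effect you would be re-proving a strengthened version of Theorem~\ref{thm:Diagrams} rather than using it.

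The paper avoids all of this with a different decomposition: it does not estimate $\mathcal P^\mu(N_R>K)$ directly, but instead constructs a \emph{clopen} sub-event $C(R)\subseteq\{0,1\}^{E(G)}$ which, on configurations $\mathcal E$ that encode a partition into connected sets, \emph{implies} $N_R>K$. Concretely, $\mathcal E\in C(R)$ if there are vertices $v_1,\ldots,v_{K+1}\in B_R(o)$ such that for every $i\ne j$ there is a path $P_{i,j}\subseteq B_R(o)$ from $v_i$ to $v_j$ along which $\mathcal E$ is missing exactly one edge. Since $B_R(o)$ is finite, $C(R)$ depends on only finitely many coordinates and is therefore clopen. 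By Proposition~\ref{pr:CellsAreCells} (every pair of ideal cells is a neighbor pair), one may choose $R$ with $\mathcal P^\mu(C(R))>1-\epsilon/2$; clopenness then transfers this to $\mathcal P^{(\lambda_n)}(C(R))>1-\epsilon$ for large $n$, and since every $\mathcal P^{(\lambda_n)}$-sample a.s.\ encodes a partition into connected cells, this forces $N_R>K$, giving the contradiction. The lesson is that the clopen-event trick of Theorem~\ref{thm:LocalTouching} \emph{does} apply here, just not to $\{N_{R_0}>K\}$ itself — the right move is to identify a finite-range, ``witnessed by short paths'' version of the event that is both clopen and a sufficient condition. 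Your route is plausible after filling in the point-process convergence and continuity arguments, but you should recognize the paper's alternative as the more economical one, and be aware that your final step is not a matter of routine bookkeeping: you would need to verify vague convergence of the Poisson processes from $\mu_{\nu_n}\xrightarrow{v}\mu$, that the boundary effects at $\partial K_L$ vanish (here the discreteness of $\mathbf D$-valued functions at $o$ saves you), and that the Voronoi cell-count functional is a.s.\ continuous on $A_L$ because label ties have probability zero.
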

\begin{proof}
    Suppose for a contradiction that there is $\epsilon,K>0$ that do not satisfy the claim.
    In particular, for any sequence $R_n\to \infty$ there is a sequence $\lambda_n\to 0$ such that 
    $$\mathbb{P}(\#\text{ cells } C^{(\lambda_n)}_i \text{ intersecting } B_{R_n}(o)>K )\le 1-\epsilon.$$
    As in the proof of Theorem~\ref{thm:LocalTouching}, we may assume that 
    $$\mu_{\nu_n}\xrightarrow{v} \mu \text{ and } \mathcal{P}^{(\lambda_n)}\xrightarrow{v} \mathcal{P}^{\mu},$$
    where $\nu_n=-\log(1-\lambda_n)$.

    For $R>0$, define the clopen event $C(R)\subseteq \{0,1\}^{E(G)}$ as follows.
    Let $\mathcal{E}\in C(R)$ if there is $\{v_1,\dots,v_{K+1}\}\subseteq B_R(o)$ such that for every $1\le i\not=j\le K+1$ there is a path $P_{i,j}\subseteq B_{R}(o)$ connecting $v_i$ and $v_j$ such that $\mathcal{E}$ is missing exactly one edge of $P_{i,j}$.
    Note that if $\mathcal{E}\in C(R)$ encodes a partition $G=\bigsqcup_{i\in \mathbb{N}} D_i$ into infinitely many connected sets, then at least $K+1$ distinct parts intersect $B_R(o)$.
    Using this observation the proof is finished as follows.
    
    By Proposition~\ref{pr:CellsAreCells}, there is $R>0$ such that
    $$\mathcal{P}^\mu(C(R))>1-\epsilon/2.$$
    As $\mathcal{P}^{(\lambda_{n})}\xrightarrow{v} \mathcal{P}^\mu$ and $C(R)$ is clopen, it follows that 
    $$\mathcal{P}^{(\lambda_n)}(C(R))>1-\epsilon$$
    for large enough $n\in \mathbb{N}$, and that is a contradiction.
\end{proof}

\subsection{Proof of Theorem~\ref{thm:localuniqueness}}

\begin{proof}[Proof of Theorem~\ref{thm:localuniqueness}]
    Given $p_0>0$ and $\varepsilon>0$, choose $K\in\mathbb N$ such that 
    $$
    1-(1-p_0)^K(1-\varepsilon/2)>1-\varepsilon.
    $$ 
    By Lemma~\ref{lem:NumberOfCells}, there exists $R>0$ and $\lambda_1>0$ such that
       $$\inf_{0<\lambda\le \lambda_1}\mathbb{P}(\#\text{ cells } C^{(\lambda)}_i \text{ intersecting } B_R(o)>K )>1-\varepsilon/4.$$
    By Theorem~\ref{thm:LocalTouching}, there exists $\lambda_0>0$, and without loss of generality $\lambda_0\le\lambda_1$, such that
     \begin{align*} 
    & \inf_{0<\lambda\le \lambda_0} \mathbb{P}\left(C^{(\lambda)}_i\cap B_{3R}(o)\not=\emptyset\not=C^{(\lambda)}_j\cap B_{3R}(o) \ \Rightarrow \ C^{(\lambda)}_i \text{ and } C^{(\lambda)}_j\text{ are neighbors}\right) \\
    & \qquad >1-\varepsilon/4.
    \end{align*}
    Hence 
    \begin{align*} 
    & \inf_{0<\lambda\le \lambda_0} \mathbb{P}\bigg(C^{(\lambda)}_i\cap B_{3R}(o)\not=\emptyset\not=C^{(\lambda)}_j\cap B_{3R}(o) \ \Rightarrow \ C^{(\lambda)}_i \text{ and } C^{(\lambda)}_j\text{ are neighbors,} \\
    & \qquad \qquad \#\text{ cells } C^{(\lambda)}_i \text{ intersecting } B_R(o)>K \bigg) >1-\varepsilon/2.
    \end{align*}
    Conditional on the event in the above display, independently coloring each cell black with probability $p_0$ yields at most one black cluster in $B_{3R}(o)$, which has non-empty intersection with $B_R(o)$ with probability at least $1-(1-p_0)^K$. The claim now follows from the choice of $K$.
\end{proof}

\begin{remark}\label{rem:GeneralProducts} 
As alluded to in Remark~\ref{rem:Generalization}, Theorem~\ref{thm:UnboundedTouching} can be extended to graphs of the form $G\coloneqq (\mathbb T_d)^k\times H$, where $H$ is a locally finite, connected graph such that the volume of the ball $B_r$ of radius $r$ in $H$ satisfies $|B_r|\in O((d-1)^r)$. More precisely, the proof of Theorem~\ref{thm:SubseqLimit} shows that subsequential limits are supported on $U\coloneqq (\partial \mathbb T_{d})^k\times \overline H\times \mathbb Z$ and decompose as product measures with harmonic measure in the first $k$ coordinates and a measure $\theta$ on $\overline{H}\times\mathbb Z$ satisfying 
$$
\forall m: c(d-1)^m\leq\theta(\overline{H}\times\{m\}) \leq C (d-1)^m
$$
for some $0<c,C<\infty$. The proof of Proposition~\ref{prop:UnboundedStab} shows that if $(\phi_1,\ldots,\phi_k,\psi,m)$ and $(\phi_1',\ldots,\phi_k',\psi',m')$ are elements of $U$ such that $\phi_i\ne\phi_i'$ for at least two $i\le k$, then the $\Gamma$-stabilizer, where $\Gamma:=({\rm Aut(\mathbb T_d)})^k\times\{{\rm id}_H\}$, of these two points is unbounded. The statement of Lemma~\ref{lm:HoweMoore} also remains valid when we replace ${\rm dist}(u,v)$ for $u,v\in G$ with the distance between their projections to the first $k$ coordinates. Thus the proof can be finished in the same way as for $(\mathbb T_d)^k$. Finally, it is not difficult to check that the results in Section~\ref{sec:ConvergenceDiagrams}, and hence the proof of Theorem~\ref{thm:localuniqueness}, extend to this setting as well. 
\end{remark}

\section{Proofs of the main results}

\begin{proof}[Proof of Theorem~\ref{thm:mainTrees}] Let $d\ge 3$, $k\ge 2$ and $G\coloneqq \mathbb T_d\times\ldots\times \mathbb T_d$ be the $k$-fold graph product. Theorem~\ref{thm:localuniqueness} shows that $G$ satisfies the assumptions of Theorem~\ref{thm:LocalToGlobal}. Theorem~\ref{thm:LocalToGlobal} shows that for every $p_0\in(0,1]$, there exists $\lambda_0>0$ such that  
$$
\inf_{u,v\in V(G)} \mathbb P_{p_0}^{(\lambda)}( u\leftrightarrow v) >0
$$
for all $\lambda<\lambda_0$. By Theorem~\ref{thm:UniquenessLRO}, we obtain that $p_u(\lambda)\to0$ as $\lambda\to0$.
\end{proof}

\begin{proof}[Proof of Theorem~\ref{thm:mainSymm}] The assumptions imply that $M$ is the symmetric space of a connected higher rank semisimple real Lie group. In this setting, it was shown in~\cite[Section 3 \& 4]{GR25} that Poisson--Voronoi percolation on $M$ satisfies local uniqueness at low intensities in the sense of \eqref{equ:localuniquenessSymmSpace}. Theorem~\ref{thm:LocalToGlobalSymm} shows that for every $p_0\in(0,1]$, there exists $\lambda_0>0$ such that 
$$
\inf_{x,y\in M} \mathbb P_{p_0}^{(\lambda)} ( x\leftrightarrow y) >0
$$
for all $\lambda<\lambda_0$. The claim follows from~\cite[Theorem 1.9]{GR25}.
\end{proof}

\begin{proof}[Proof of Theorem~\ref{thm:mainSimplyConn}] The assumptions guarantee that $M$ splits as a Riemannian product of symmetric spaces associated to connected simple real Lie groups whose sum of ranks is greater than or equal to $2$. If there are at least two factors with rank greater than or equal to $1$, the assertion follows from Theorem~\ref{thm:mainSymm} as we can split into a product of $X$ and $Y$, which are symmetric spaces of non-compact connected semisimple real Lie groups $G$ and $H$. If there is at most one factor with rank greater than or equal to $1$, this factor has rank greater than or equal to $2$, and thus property~(T). Since compact groups have property~(T), it follows that $M$ is the symmetric space of a connected higher rank semisimple real Lie group with property~(T).
In this case, the claim follows from \cite[Theorem~1.1]{GR25}.
\end{proof}

\section{Future perspectives}

\begin{question}[\textsc{Strong product of $\mathbb{T}_d\times \mathbb{Z}$}] \label{q:strongproduct}
    Consider the strong product of $\mathbb T_d$ and $\mathbb Z$, which is the graph structure on $\mathbb{T}_d\times \mathbb{Z}$ where each pair of distinct vertices $(v_1,w_1)$ and $(v_2,w_2)$ forms an edge if $\dist(v_1,v_2)\le 1$ and $\dist(w_1,w_2)\le 1$. Note that this graph corresponds to the $L^\infty$-product of the two metric spaces. 
    In this case, it can be proven that the IPVT only depends on the first coordinate a.s.
    That is, each cell is of the form $C\times\mathbb{Z}$, where $C\subseteq \mathbb{T}_d$.
    It follows that any two cells that touch have an unbounded border, but, of course, not every pair of cells touch. Additionally, Bernoulli site percolation on the Delaunay graph of the IPVT has $p_u$ equal to $1$.
    What is the value $\liminf_{\lambda\to 0} p_u(\lambda)$ in this model?
\end{question}

\begin{question}[\textsc{Sufficient conditions for a.s.~unbounded boundaries}]
In the light of the discussion in Section~\ref{sec:introdiscrete}, devise a sufficient criterion, which does not rely on specific properties of the underlying isometry group as in \cite{FMW23}, for having a.s.~unbounded borders between every pair of cells, resp.~every pair of cells with non-trivial intersection, in a generic metric space endowed with an infinite Radon measure (supporting a non-trivial IPVT). 
\end{question}

\begin{question}[\textsc{Discontinuous vanishing uniqueness thresholds}] Does there exist a non-amenable Cayley graph such that 
$$
\lim_{\lambda\to0} p_u(\lambda) = 0 < p_u(0),
$$
where $p_u(0)$ is the uniqueness threshold of Bernoulli site percolation on the Delaunay graph of the IPVT?
\end{question}

\begin{question}[\textsc{Other generating sets}]
    The unbounded borders phenomenon is sensitive to the local geometry. Indeed, contrast the graph product of $\mathbb T_3$~and~$\mathbb T_3$ with the strong product, cf.~Question~\ref{q:strongproduct}. These are Cayley graphs of the same group and, in particular, quasi-isometric. While the former satisfies the unbounded borders phenomenon as shown in Theorem~\ref{thm:UnboundedTouching}, the latter does not as can be seen by studying sequences of $L^\infty$-horofunctions centered around points converging to infinity in suitable directions. This observation inspires the following question: How strong is the dependence of the phenomenon ``$p_u(\lambda)\to0$'' on the choice of generating set?
\end{question}

\bibliographystyle{alpha}
\bibliography{percbiblio}

\newpage

\appendix

\section{Additional proofs from Section~\ref{sec:Basic}}\label{app:Proofs}
In the setting of Section~\ref{sec:model}, denote by $\mathcal G^{(\lambda)}=(V^{(\lambda)},E^{(\lambda)})$ the graph with vertices
$$
V^{(\lambda)}\coloneqq  \mathrm{Vor}(\mathbf{X}^{(\lambda)}\cup\{o\})
$$ 
and edges between every pair of cells sharing a neighbor in the original graph. This graph is called the {\bf Delaunay graph} associated to the tessellation $\mathrm{Vor}(\mathbf{X}^{(\lambda)}\cup\{o\})$. Here we work with
$$
\mathbf{X}^{(\lambda)}_0\coloneqq \mathbf{X}^{(\lambda)}\cup\{o\}
$$
instead of $\mathbf{X}^{(\lambda)}$ to treat the Delaunay graph in the framework of unimodular random rooted graphs. We assume that $o$ is equipped with an independent ${\rm Unif}[0,1]$-label. Note that $\mathbf X_0^{(\lambda)}$ coincides with $\mathbf X^{(\lambda)}$ conditioned to have a point at the origin.

Note that Bernoulli--Voronoi percolation defined w.r.t.~$\mathbf{X}^{(\lambda)}_0$ and $p$-Bernoulli site percolation $\mathcal G^{(\lambda)}[p]$ on the random graph $\mathcal G^{(\lambda)}$ are closely related. Indeed, a vertex $v\in V$ belongs to an infinite Bernoulli--Voronoi percolation cluster if and only if its Voronoi cell $C^{(\lambda)}(v)$ belongs to an infinite $\mathcal G^{(\lambda)}[p]$-cluster. As a consequence, the number of infinite clusters and the phase transition in the parameter $p$ are the same. 

\begin{lemma} \label{lm:Palm}
Let $G=(V,E,o)$ be a locally finite, connected, infinite, transitive, unimodular rooted graph. Let $A$ be an ${\rm Aut}(G)$-invariant event and let $A_0$ be the restriction of $A$ to configurations containing the origin. Then 
$$
\mathbb P(\mathbf{X}^{(\lambda)}_0\in A_0)=\mathbb P(\mathbf{X}^{(\lambda)}\in A)\in\{0,1\}.
$$
\end{lemma}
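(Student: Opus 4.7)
The plan is to split the claim into two independent assertions: a $0$-$1$ law for $\mathbf{X}^{(\lambda)}$ itself, and a Palm identification showing that $\mathbf{X}_0^{(\lambda)}$ is distributed as $\mathbf{X}^{(\lambda)}$ conditioned on $\{o\in\mathbf{X}^{(\lambda)}\}$.

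First I would handle the $0$-$1$ law. Since $\mathbf{X}^{(\lambda)}$ is $\lambda$-Bernoulli site percolation on the infinite transitive graph $G$, its law is the product measure $\mathrm{Ber}(\lambda)^{\otimes V}$ on $\{0,1\}^V$. Given an $\mathrm{Aut}(G)$-invariant measurable event $A$, approximate $\mathbf 1_A$ in $L^1$ by an indicator $\mathbf 1_{A'}$ of a cylinder event depending only on a finite set $F\subset V$. Because $G$ is infinite and transitive, the $\mathrm{Aut}(G)$-orbit of any vertex is infinite, so one can pick a sequence of automorphisms $\phi_n\in\mathrm{Aut}(G)$ with $\phi_n(F)\cap F=\emptyset$. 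Invariance of $A$ under $\phi_n$ together with independence of coordinates on disjoint sets yields $\mathbb P(A\cap \phi_n A')\to\mathbb P(A)\mathbb P(A')$, and combining this with the $L^1$ approximation gives $\mathbb P(A)=\mathbb P(A)^2$, so $\mathbb P(A)\in\{0,1\}$. I note that Proposition~\ref{prop:VoronoiBasics}~(iv) is only stated for the coloured process $\omega_p^{(\lambda)}$, so I would write this elementary argument out directly for $\mathbf{X}^{(\lambda)}$.

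Next I would identify $\mathbf{X}_0^{(\lambda)}$ as a Palm version. By the independence of Bernoulli site percolation, conditioning on the event $\{o\in\mathbf{X}^{(\lambda)}\}$ (of probability $\lambda>0$) leaves the marginals on $V\setminus\{o\}$ unchanged, so the conditional law coincides with the law of $\mathbf{X}^{(\lambda)}\cup\{o\}=\mathbf{X}_0^{(\lambda)}$. Hence
$$
\mathbb P\bigl(\mathbf{X}_0^{(\lambda)}\in A\bigr) \;=\; \mathbb P\bigl(\mathbf{X}^{(\lambda)}\in A\,\bigm|\,o\in\mathbf{X}^{(\lambda)}\bigr).
$$
Since $\mathbf{X}_0^{(\lambda)}$ always contains $o$, one has $\{\mathbf{X}_0^{(\lambda)}\in A_0\}=\{\mathbf{X}_0^{(\lambda)}\in A\}$.

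To conclude, I would combine the two steps. If $\mathbb P(\mathbf{X}^{(\lambda)}\in A)=0$, then $\mathbb P(\{\mathbf{X}^{(\lambda)}\in A\}\cap\{o\in\mathbf{X}^{(\lambda)}\})=0$ and the conditional probability above vanishes; if $\mathbb P(\mathbf{X}^{(\lambda)}\in A)=1$, then the complement of $\{\mathbf{X}^{(\lambda)}\in A\}$ is null and the conditional probability equals $1$. In either case the equality $\mathbb P(\mathbf{X}_0^{(\lambda)}\in A_0)=\mathbb P(\mathbf{X}^{(\lambda)}\in A)$ follows. The only point requiring any care is to argue the $0$-$1$ law at the level of the Bernoulli process rather than quoting Proposition~\ref{prop:VoronoiBasics}~(iv); everything else is routine Palm bookkeeping on a transitive, unimodular graph.
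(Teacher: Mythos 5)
Your proof is correct, and it takes a genuinely different route from the paper. The paper's own proof is a one-line reduction: it observes that $\mathbf X^{(\lambda)}$ can be obtained from a marked Poisson point process on the discrete vertex set $V$ by forgetting multiplicities, and then simply cites the analogous continuum result~\cite[Lemma~6.6]{GR25}. You instead give a self-contained, elementary argument: a bare-hands $0$--$1$ law for invariant events of the iid marked Bernoulli field on an infinite transitive graph via $L^1$-approximation by cylinders and disjoint translates, together with the observation that $\mathbf X_0^{(\lambda)}$ is precisely $\mathbf X^{(\lambda)}$ conditioned on $\{o\in X^{(\lambda)}\}$ because the site variables are independent and conditioning on $o\in X^{(\lambda)}$ leaves the joint law on $V\setminus\{o\}$ (and the auxiliary mark at $o$) untouched. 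What your route buys is transparency and independence from the external reference; what you lose is the uniformity with the Poisson case that the paper exploits throughout. One minor observation: your argument never uses unimodularity, only transitivity and infinitude of $G$; this is consistent, since unimodularity in the lemma's hypotheses is inherited from the surrounding Palm framework (Proposition~\ref{prop:BernoulliDelaunay}) rather than being essential to the $0$--$1$ law itself. Your mixing step $\mathbb P(A\cap\phi_n A')\to\mathbb P(A)\mathbb P(A')$ is slightly compressed --- it is cleanest to compare $\mathbb P(A\cap\phi_n A)=\mathbb P(A)$ with $\mathbb P(A'\cap\phi_n A')=\mathbb P(A')^2$ and use $\mathbb P(A\triangle A')<\varepsilon$ twice --- but the conclusion $\mathbb P(A)=\mathbb P(A)^2$ is right.
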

\begin{proof} Since $\mathbf X^{(\lambda)}$ can be obtained from the Poisson point process with iid  ${\rm Unif}[0,1]$-labels by forgetting multiplicities, this follows from similar arguments as in~\cite[Lemma~6.6]{GR25}.
\end{proof}

\begin{proposition} \label{prop:BernoulliDelaunay} 
Let $G=(V,E,o)$ be a locally finite, connected, infinite, transitive, unimodular rooted graph. Fix $\lambda>0$. Then the following hold:
\begin{enumerate}
\item[{\rm (i)}] the random rooted graph $(\mathcal G^{(\lambda)},C^{(\lambda)}(o))$ is unimodular and extremal;
\item[{\rm (ii)}] the number of infinite clusters in $\mathcal G^{(\lambda)}[p]$ is $0,1$ or $\infty$ a.s.;
\item[{\rm (iii)}] there exists a constant $p_c\in[0,1]$ such that $\mathcal G^{(\lambda)}[p]$ has infinite clusters a.s.~if $p>p_c$ and does not have infinite cluster a.s.~if $p<p_c$;
\item[{\rm (iv)}] there exists a constant $p_u\in[0,1]$ such that $\mathcal G^{(\lambda)}[p]$ has a unique infinite cluster a.s.~if $p>p_u$ and does not have a unique infinite cluster a.s.~if $p<p_u$.
\end{enumerate}
\end{proposition}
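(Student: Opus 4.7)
The plan is to handle (i) first and then bootstrap the remaining items via standard Bernoulli percolation arguments for extremal unimodular random rooted graphs. For (i), I would view $(\mathcal{G}^{(\lambda)}, C^{(\lambda)}(o))$ as the rooted random graph induced by the Palm version $\mathbf{X}^{(\lambda)}_0$ of the ${\rm Aut}(G)$-invariant marked Poisson point process on $G$. Unimodularity amounts to checking the Mass Transport Principle: for any nonnegative Borel $f$ on doubly-rooted (marked) graphs,
\[
\mathbb{E}\Bigl[\sum_{C \in V(\mathcal{G}^{(\lambda)})} f(\mathcal{G}^{(\lambda)}, C^{(\lambda)}(o), C)\Bigr] = \mathbb{E}\Bigl[\sum_{C} f(\mathcal{G}^{(\lambda)}, C, C^{(\lambda)}(o))\Bigr].
\]
I would prove this by transferring mass along the allocation map $v \mapsto C^{(\lambda)}(v)$ and applying the Mass Transport Principle for the transitive unimodular graph $G$ to the resulting ${\rm Aut}(G)$-diagonally invariant transport on $V \times V$. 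Extremality then follows from Lemma~\ref{lm:Palm}: any event in the invariant $\sigma$-algebra of the random rooted graph $(\mathcal{G}^{(\lambda)}, C^{(\lambda)}(o))$ pulls back to an ${\rm Aut}(G)$-invariant event of $\mathbf{X}^{(\lambda)}_0$, which has probability $0$ or $1$.

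For (ii), it is convenient to pull $\mathcal{G}^{(\lambda)}[p]$ back to a site percolation on $G$ itself by opening $v \in V$ whenever $C^{(\lambda)}(v)$ is retained. This pullback is ${\rm Aut}(G)$-invariant and ergodic, and the cluster structure at the level of $G$ corresponds bijectively to that of $\mathcal{G}^{(\lambda)}[p]$. Because the independent Bernoulli coloring of cells is insertion tolerant on $\mathcal{G}^{(\lambda)}$, one obtains insertion tolerance for the pulled-back percolation after a controlled resampling of the Bernoulli process $X^{(\lambda)}$ in a ball around the point whose cell one wants to toggle. The classical Newman--Schulman argument on transitive unimodular graphs then yields that the number of infinite clusters is almost surely $0$, $1$ or $\infty$.

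For (iii) and (iv), I would use the natural monotone coupling of $\mathcal{G}^{(\lambda)}[p]$ across $p$ via iid ${\rm Unif}[0,1]$ thresholds attached to each cell. Both ``having an infinite cluster'' and ``having a unique infinite cluster'' are ${\rm Aut}(G)$-invariant events of $\mathbf{X}^{(\lambda)}$, so by Lemma~\ref{lm:Palm} each has probability $0$ or $1$. Monotonicity of the existence event in $p$ is immediate from the coupling, producing $p_c$. For $p_u$, I would adapt the H\"aggstr\"om--Peres argument: if $\mathcal{G}^{(\lambda)}[p]$ has a unique infinite cluster almost surely, then insertion tolerance together with the extremality from (i) prevents sprinkled open cells at $p' > p$ from merging into several infinite clusters, so uniqueness is monotone in $p$.

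The main obstacle will be the bookkeeping in (i): making the Mass Transport Principle rigorous for the Delaunay graph, since the root is the random cell containing $o$ rather than an actual point of $G$, and one has to keep careful track of the tie-breaking labels $Y_i^{(\lambda)}$ when moving between $\mathbf{X}^{(\lambda)}$ and its Palm version $\mathbf{X}^{(\lambda)}_0$. Once (i) is in place, items (ii)--(iv) are essentially templates in the literature on Bernoulli percolation on extremal unimodular random rooted graphs, and should follow with only minor adaptations to the present setting.
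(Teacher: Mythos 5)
For item (i) your plan coincides with the paper's: unimodularity is verified by constructing an ${\rm Aut}(G)$-diagonally invariant mass transport on $V\times V$ that transports along the allocation map $v\mapsto C^{(\lambda)}(v)$ and then invoking the Mass Transport Principle on the deterministic transitive unimodular graph $G$; extremality reduces to triviality of the invariant $\sigma$-field, which follows from ergodicity of $\mathbf X^{(\lambda)}$ via Lemma~\ref{lm:Palm} and \cite[Theorem 4.7]{AL07}.

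For items (ii)--(iv) your route differs, and there is a genuine gap in (ii). You pull $\mathcal G^{(\lambda)}[p]$ back to a site percolation on $G$ and claim insertion tolerance for this pulled-back process ``after a controlled resampling of the Bernoulli process $X^{(\lambda)}$ in a ball.'' But the paper explicitly records, immediately after Theorem~\ref{thm:UniquenessLRO}, that Voronoi percolation is \emph{not} insertion tolerant (and also not deletion tolerant); this is precisely why the authors do not apply \cite{LS99} directly and instead pass to the Delaunay graph. The resampling you envision does not implement insertion tolerance in the required single-site sense: altering $X^{(\lambda)}$ inside a ball changes the tessellation itself, can split or merge cells and rearrange connections through a neighborhood, and so is not the kind of local, monotone modification that the Newman--Schulman or H\"aggstr\"om--Peres arguments rely on. The paper sidesteps this entirely by observing that, \emph{conditional on} $\mathcal G^{(\lambda)}$, the coloring of cells is honest Bernoulli site percolation on a graph that, by (i), is an extremal unimodular random rooted graph; there insertion tolerance is automatic, and the desired conclusions are exactly \cite[Corollary 6.9]{AL07} for (ii) and \cite[Theorem 6.7]{AL07} for (iv), while (iii) follows directly from extremality together with monotonicity of the standard coupling. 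If you instead run your Newman--Schulman and H\"aggstr\"om--Peres arguments at the level of the Delaunay graph (conditionally on $\mathcal G^{(\lambda)}$) and combine them with extremality from (i), the argument is correct and is essentially a re-derivation of the cited results from \cite{AL07}; the flaw is specifically the claimed insertion tolerance of the pulled-back percolation on $G$.
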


\begin{proof}    
(i). Note that $\mathcal G^{(\lambda)}$ is supported on locally finite, connected, infinite graphs by Proposition~\ref{prop:VoronoiBasics}~(i). 

We now show that the random rooted graph $(\mathcal G^{(\lambda)},C^{(\lambda)}(o))$ is unimodular by verifying that it obeys the Mass Transport Principle, cf.~\cite{AL07}. Let $f: \mathcal G_{\bullet\bullet} \to[0,\infty]$ be a Borel measurable function on the space of (isomorphism classes of) bi-rooted graphs equipped with the local metric. Define 
\begin{equation*}
m(u,v,\mathbf X^{(\lambda)}) \coloneqq  \mathbf{1}\big\{ u,v \in \mathbf X^{(\lambda)} \big\} f\big( \mathcal G^{(\lambda)}, C^{(\lambda)}(u), C^{(\lambda)}(v) \big).
\end{equation*}
Since $f(G,o,x)$ depends only on the isomorphism class, it follows that 
$$
m(gu,gv,g\mathbf X^{(\lambda)})=m(u,v,\mathbf X^{(\lambda)}) 
$$
for every $g\in \mathrm{Aut}(G)$. In other words, $m$ is $\mathrm{Aut}(G)$-diagonally invariant. From the definition of $m$ and the Mass Transport Principle for transitive unimodular graphs, we thus conclude that
\begin{align*}
\mathbb E \bigg[ \sum_{ C\in V(\mathcal G^{(\lambda)})} f\big(\mathcal G^{(\lambda)},C^{(\lambda)}(o), C\big) \bigg] & = \mathbb E \bigg[ \sum_{v\in V(G)} m\big(o,v, \mathbf X^{(\lambda)}\big) \Big| o\in \mathbf X^{(\lambda)} \bigg] \\
&= \frac{1}{\lambda} \mathbb E \bigg[ \sum_{v\in V(G)} m\big(o,v, \mathbf X^{(\lambda)}\big) \bigg] \\
& = \frac{1}{\lambda} \mathbb E \bigg[ \sum_{v\in V(G)} m\big(v,o, \mathbf X^{(\lambda)}\big) \bigg] \\ 
& = \mathbb E \bigg[ \sum_{x\in V(\mathcal G^{(\lambda)})} f\big(\mathcal G^{(\lambda)},C, C^{(\lambda)}(o)\big) \bigg].
\end{align*}
This proves unimodularity.

Let $\mathcal I$ denote the $\sigma$-field of events in $\mathcal G_{\bullet}$ (the space of (isomorphism classes of) rooted graphs equipped with the local metric) which are invariant under non-rooted isomorphisms. By~\cite[Theorem 4.7]{AL07}, it suffices to show that $\mathcal I$ is trivial. This follows from ergodicity of $\mathbf X^{(\lambda)}$. Note that this also implies (iii).

Finally, (ii) and (iv) follow from \cite[Corollary~6.9 \& Theorem~6.7]{AL07}.
\end{proof}

\begin{proof}[Proof of Proposition~\ref{prop:Phases}] Combine Proposition~\ref{prop:BernoulliDelaunay} and Lemma~\ref{lm:Palm}.
\end{proof}

\begin{proof}[Proof of Theorem \ref{thm:UniquenessLRO}]

Let 
\begin{equation*}
p_{\rm  LRO}(\lambda) \coloneqq  \inf \Big\{ p \, : \, \inf_{u,v\in V} \mathbb P_{p}^{(\lambda)}( u \leftrightarrow v)>0 \Big\}
\end{equation*}
denote the infimum in \eqref{equ-UniquenessLRO}. 

The fact that $p_u(\lambda) \geq p_{\rm  LRO}(\lambda)$ follows by a standard argument: If $p>p_u(\lambda)$, Proposition~\ref{prop:BernoulliDelaunay}~(iii) implies that $\omega_p^{(\lambda)}$ has a unique infinite cluster with positive probability. By transitivity, 
$\mathbb P_p^{(\lambda)}(|C(v)|=\infty)=\mathbb P_p^{(\lambda)}(|C(o)|=\infty)>0$ for all $v\in V$. The FKG-inequality (Proposition~\ref{prop:VoronoiBasics}~(ii)) implies that
$$
\inf_{u,v\in V} \mathbb P_{p}^{(\lambda)}( u \leftrightarrow v) \ge \inf_{u,v\in V} \mathbb P_{p}^{(\lambda)}(|C(u)|, |C(v)| =\infty) \ge \mathbb P_p^{(\lambda)}(|C(o)|=\infty)^2>0.
$$
Hence $p>p_{\rm LRO}(\lambda)$. 

Conversely, if $p>p_{\rm LRO}(\lambda)$, then (by monotonicity in $p$)
$$
\inf_{u,v\in V} \mathbb P_{p}^{(\lambda)}( u \leftrightarrow v)>0.
$$
We will now show that $\omega_{q}^{(\lambda)}$ has a unique infinite cluster for every $q>p$. Clearly, this implies that $p\ge p_u(\lambda)$ and will thus prove the converse inequality.

Fix $q>p$. By monotonicity, 
$$
\inf_{u,v\in V} \mathbb P_{q}^{(\lambda)}( u \leftrightarrow v)>0,
$$
which implies that $\omega_{q}^{(\lambda)}$ has some infinite cluster with positive probability. 

By Proposition~\ref{prop:BernoulliDelaunay}~(ii), it thus suffices to show that $\omega_q^{(\lambda)}$ has finitely many infinite clusters. Recall the definition of {\bf cluster frequencies} from \cite[Lemma 4.2]{LS99}.

\begin{lemma}[\textsc{Cluster frequencies}] \label{lm:frequ}
There exists a ${\rm Aut}(G)$-invariant measurable function ${\rm freq}: \{0,1\}^V \to[0,1]$ with the following property. Suppose that $\mu$ is the law of an ${\rm Aut}(G)$-invariant site percolation $\omega$ on~$G$. Let $v\in V$ and let $\mathbb P_v$ be the law of simple random walk $(X_n)_{n=1}^\infty$ on $G$ started in $v$. Then, $\mu\otimes\mathbb P_v$-almost surely, 
$$
\lim_{n\to\infty} \frac{1}{n} \sum_{i=0}^{n-1} \mathbf 1_{\{ X_i\in C\}} = {\rm frequ}(C) \quad \text{for every cluster~} C \text{ of } \omega.
$$ 
\end{lemma}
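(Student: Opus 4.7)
I would prove this lemma by adapting the argument of~\cite[Lemma~4.2]{LS99}, which applies Birkhoff's pointwise ergodic theorem to the ``environment viewed from the walker'' Markov chain on rooted configurations. The first step is to verify that, for a transitive unimodular graph $G$ and any $\mathrm{Aut}(G)$-invariant measure $\mu$ on $\{0,1\}^V$, the measure $\mu$ is stationary for the chain whose one-step transition re-roots the current configuration at a uniformly chosen neighbor of its root. Stationarity follows from transitivity (all vertices have the same degree) together with the mass transport principle (unimodularity), which balances the mass exchanged in opposite directions along each edge.

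With stationarity in hand, Birkhoff's theorem applied to the shift on the path space of $(\omega, X_0, X_1, \ldots)$ under $\mu \otimes \mathbb{P}_v$ yields that for any bounded measurable $f \colon \{0,1\}^V \to \mathbb{R}$ the averages $\frac{1}{n}\sum_{i=0}^{n-1} f(\tau_{X_i^{-1}} \omega)$ converge almost surely. Applying this to $f(\omega) = \omega(o)$ and feeding in the ``cluster-indicator'' configuration $\mathbf{1}_C \in \{0,1\}^V$ gives convergence of $\frac{1}{n}\sum_{i=0}^{n-1} \mathbf{1}\{X_i \in C\}$ to a limit depending measurably on $(\mathbf{1}_C, X)$. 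Since $\omega$ a.s.~has only countably many clusters, the corresponding full-measure events can be intersected to obtain a single full-measure event on which the limit exists for every cluster of $\omega$ simultaneously.

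The main obstacle is to verify that the resulting limit depends only on $C$ (as a subset of $V$) and not on the ambient configuration $\omega$, the starting vertex $v$, or the walk trajectory. Independence from $v$ is a consequence of transitivity: any two starting points are related by an automorphism of $G$ that maps walks to walks in distribution. Independence from $\omega$ and from the walk follows by observing that the relevant Birkhoff average depends on $(\omega, X)$ only through $\mathbf{1}_C$ together with a stationary ergodic process, so the limit is almost surely constant on each ergodic component and can be chosen canonically (e.g., as the conditional expectation with respect to the shift-invariant $\sigma$-algebra). One then defines $\mathrm{freq}(C)$ to be this canonical value, extended by $0$ to configurations in $\{0,1\}^V$ not arising as clusters; $\mathrm{Aut}(G)$-invariance and measurability of $\mathrm{freq}$ are then immediate from the construction, as transitivity and unimodularity ensure that the whole ergodic setup is $\mathrm{Aut}(G)$-equivariant.
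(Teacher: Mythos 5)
The paper does not give its own proof of this lemma; it invokes it directly from \cite[Lemma~4.2]{LS99}, which is the reference you announce you would adapt. Your outline therefore follows the same approach as the source the paper relies on: stationarity of the environment seen from the walker (using transitivity and unimodularity), Birkhoff's ergodic theorem, and a countable intersection over clusters.

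The step that deserves more care is your last one, where you pass from a shift-invariant Birkhoff limit, \emph{a priori} a function of $(\omega,X)$, to a deterministic, measurable, $\mathrm{Aut}(G)$-invariant function of the set $C$ alone. You justify this by saying that the limit ``is almost surely constant on each ergodic component and can be chosen canonically (e.g., as the conditional expectation with respect to the shift-invariant $\sigma$-algebra).'' Two issues: (i) ergodicity of the rooted $(\mathbf 1_C, X)$-dynamics is neither assumed in the lemma nor established by your argument, and it is false in general (conditionally on $\omega$ the walk can have a nontrivial tail, for instance on non-Liouville transitive graphs such as regular trees); and (ii) a conditional expectation with respect to the shift-invariant $\sigma$-algebra is still a random variable measurable in $(\omega,X)$, not the deterministic function ${\rm freq}$ of $C$ that the lemma requires. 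What must actually be shown is that the a.s.\ Birkhoff limit is measurable with respect to $\sigma(\mathbf 1_C)$ and independent of the starting vertex $v$; the latter follows from transitivity as you say, but the former is exactly the substance of \cite[Lemma~4.2]{LS99} (there handled by analyzing the invariant $\sigma$-algebra of the two-sided environment-and-walk process) and should not be dismissed as immediate from the construction.
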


With this preparation, the fact that $\omega_q^{(\lambda)}$ has only finitely many clusters may be proven similarly as in \cite{GR25}. We thus include only a sketch of the argument: 

Long-range order for $\omega_p^{(\lambda)}$ implies that the expected frequency of the $\omega_p^{(\lambda)}$-cluster of the origin is positive. In particular, there is a cluster of positive frequency and thus finitely many clusters of maximal frequency almost surely. Call the clusters maximizing frequency special. The same argument as in \cite[pp. 38-39]{GR25} shows that every infinite $\omega_q^{(\lambda)}$-cluster must contain a special $\omega_p^{(\lambda)}$-cluster.\footnote{This argument is based on the proof of uniqueness monotonicity \cite{HP99} given in \cite[Theorem~5.4]{HJ06}.} In particular, $\omega_q^{(\lambda)}$ has finitely many infinite clusters, which concludes the proof.
\end{proof}

\section{Proofs from Section~\ref{sec:Trees}}\label{app:Trees}

\begin{proof}[Proof of Proposition~\ref{pr:BasicCorona}]
    (i).
    Let $a_n=(a_{n,i})_{i=1}^k\in G$ and $(\ell_n)_{n\in \mathbb{Z}}$ be such that
    $$\dist(v,a_n)+\ell_n\to f(v)$$
    for every $v\in G$, and set $m=f({\bf o})$.
    After possibly passing to a subsequence, we may assume that for every $i\in \{1,\dots,k\}$ there is $\psi_i\in \overline{\mathbb{T}_d}$ such that either $a_{n,i}=\psi_i\in \mathbb{T}_d$ for every $n\in \mathbb{N}$ or
    $$a_{n,i}\to\psi_i\in \partial\mathbb{T}_d.$$
    We claim that $\psi=(\psi_1,\dots,\psi_k)$ and $m$ works as required.

    Fix $v=(v_1,\dots,v_k)\in G$ and take $n\in \mathbb{N}$ large, so that for every $i\in \{1,\dots,k\}$ such that $\psi_i\in \partial\mathbb{T}_d$, we have that $a_{n,i}\in \mathbb{T}_d$ separates $\{o,v_i\}$ and $\psi_i$.
    Then 
    \begin{equation}
        \begin{split}
            f(v)= & \ \sum_{i=1}^k \dist(v_i,a_{n,i})+\ell_n\\
            = & \ \left(\sum_{i=1}^k \dist(v_i,a_{n,i})-\dist(o,a_{n,i})\right)+\sum_{i=1}^k \dist(o,a_{n,i}) +\ell_n\\
            = & \ h^\psi(v)+f({\bf o})=h^\psi(v)+m,
        \end{split}
    \end{equation}
    where the first equality and third equality hold for large enough $n\in \mathbb{N}$.

    (ii).
    For every $1\le i\le k$, define a sequence $(a_{n,i})_n\subseteq (\mathbb{T}_d)^k$ such that $a_{n,i}=\psi_i$ for every $n\in\mathbb{N}$ whenever $\psi_i\in \mathbb{T}_d$ and $a_{n,i}\to \psi_i$ otherwise.
    Set $a_n=(a_{n,1},\dots,a_{n,k})$.
    We claim that $d(v,a_n)+\ell_n\to h^\psi+m$ for every $v=(v_1,\dots,v_k)\in (\mathbb{T}_d)^k$, where $d({\bf o},a_n)+\ell_n=m$.
    Indeed, we have
    \begin{equation*}
            d(v,a_n)+\ell_n=  m+\sum_{i=1}^k(d(v_i,a_{n,i})-d(o,a_{n,i}))
            \to  h^\psi(v)+m
    \end{equation*}
    by the definition of $h^\psi$.

    (iii).
    Let $(\psi,m),(\phi,n)\in (\partial \mathbb{T}_d)^k\times \mathbb{Z}$ and suppose that $h^\psi+m=h^\phi+n$.
    Then $m=n$ as $m=h^\psi({\bf o})+m=h^\phi({\bf o})+n=n$.
    Let $w_n=(a_n,o,\dots,o)\in (\mathbb{T}_d)^k$ be such that $a_n\to \psi_1$, where $\psi=(\psi_1,\dots,\psi_k)$.
    Then we have that
    $$h^\phi(w_n)+n=h^\psi(w_n)+m=-d(o,a_n)+m\to -\infty$$
    by the definition of $h^\psi$.
    Note that if $\phi_1\not=\psi_1$, where $\phi=(\phi_1,\dots,\phi_k)$, there exists $r\in \mathbb{T}_d$ separating $\{b_n\}_n\cup\{{\bf o}\}$ from $\psi$, where $b_n\to \phi_1$.
    Then we get that
    $$h^\phi(w_n)+n=d(b_n,r)-d(o,r)+n=+\infty.$$
    Hence $\phi_1=\psi_1$ and the first claim follows.

    Now let $(\psi_n,m_n),(\psi,m)\in (\partial \mathbb{T}_d)^k\times \mathbb{Z}$ be such that $(\psi_n,m_n)\to (\psi,m)$.
    We can assume without loss of generality that $m_n=m$ for every $n\in \mathbb{N}$.
    Let $v=(v_1,\dots,v_k)\in (\mathbb{T}_d)^k$ and fix $b=(b_1,\dots,b_k)\in (\mathbb{T}_d)^k$ such that, for $1\le i\le k$, $b_i$ separates $\{v_i,o\}$ from $\{\psi_{n,i}\}_{n\ge n_0}\cup\{\psi_{i}\}$ 
    for some $n_0\in \mathbb{N}$, where $\psi_n=(\psi_{n,1},\dots, \psi_{n,k})$ and $\psi=(\psi_{1},\dots, \psi_{k})$. 
    Then we have that 
    $$h^{\psi_n}(v)+m=\sum_{i=1}^k (d(v_i,b_i)-d(o,b_i))+m=h^{\psi}(v)+m$$
    for every $n\ge n_0$.
    This shows continuity.
    The fact that the map has closed range can be proved in a similar way.

    Finally, let $(\psi,m)\in (\partial \mathbb{T}_d)^k\times \mathbb{Z}$ and $\tau=(\tau_1,\dots,\tau_k)\in H$.
    Fix $v=(v_1,\dots,v_k)\in (\mathbb{T}_d)^k$.
    There exists $b=(b_1,\dots,b_k)\in (\mathbb{T}_d)^k$ such that $b_i$ separates $\{v_i,o,\tau_i(o)\}$ from $\tau_i(\psi_{i})$ for every $1\le i\le k$, where $\psi=(\psi_{1},\dots, \psi_{k})$.
    Then we have that
    \begin{equation*}
        \begin{split}
            h^{\tau(\psi)}(v) +h^\psi(\tau^{-1}({\bf o}))+m 
            = & \ \sum_{i=1}^k(d(v_i,b_i)-d(o,b_i)) \\
            & \ \ \ \ \ +\sum_{i=1}^k(d(\tau^{-1}_i(o),\tau^{-1}_i(b_i))-d(o,\tau^{-1}_i(b_i)))+m\\
            = & \ \sum_{i=1}^k(d(\tau^{-1}_i(v_i),\tau^{-1}_i(b_i))-d(o,\tau^{-1}_i(b_i)))+m\\
            = & \ h^\psi(\tau^{-1}(v))+m.
        \end{split}
    \end{equation*}
    as $\tau^{-1}_i(b_i)$ separates $\{o,\tau^{-1}_i(o)\}$ from $\psi_{i}$ for every $1\le i\le k$.
\end{proof}

\begin{proof}[Proof of Theorem~\ref{thm:Diagrams}]
    In order to prove that $\mathcal{P}=\mathcal{P}^\mu$, it is enough to show that for every $\epsilon,R>0$ and a bond percolation configuration $\mathcal{E}$ in $G\upharpoonright B_R(o)$ we have that
    $$\lim_{n\to\infty}\mathcal{P}^{(1-e^{-\lambda_n})}(\widehat{\mathcal{E}})\ge \max\{0, \mathcal{P}^{\mu}(\widehat{\mathcal{E}})-\epsilon\},$$
    where $\widehat{\mathcal{E}}$ is the cylinder event that consists of percolation configurations that agree with $\mathcal{E}$ on $B_R(o)$.
    As a first step we show that it is enough to work with $\mathcal{P}^{\mu_{\lambda_n}}$.

    \begin{claim}\label{cl:LastClaim}
        Let ${\bf Z}^{(\lambda_n)}=\{(Z^{(\lambda_n)}_1,Y^{(\lambda_n)}_1),(Z^{(\lambda_n)}_2,Y^{(\lambda_n)}_2),\dots\}$ be the Poisson point process on ${\bf D}$ with intensity $\mu_{\lambda_n}$ and $\{Y_1,Y_2,\dots\}$ consists of iid ${\rm Unif}[0,1]$-labels.
        Then
        $$\mathcal{P}^{\mu_{\lambda_n}}=\mathcal{P}^{(1-e^{-\lambda_n})}$$
        for every $n\in \mathbb{N}$.
        In particular, $\mathcal{P}^{\mu_{\lambda_n}}\to \mathcal{P}.$
    \end{claim}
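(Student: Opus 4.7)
The plan is to identify the Poisson point process $\mathbf{Z}^{(\lambda_n)}$ on $\mathbf{D}$ with a Poisson point process on $G$ via the embedding $\iota_{t_{\lambda_n}}$, and then match the resulting Voronoi bond encoding with the Bernoulli--Voronoi one. Since $\mu_{\lambda_n}=\iota_{t_{\lambda_n}}^*(\lambda_n\cdot\nu)$ and $\iota_{t_{\lambda_n}}:G\to\mathbf{D}$ is injective, $\mathbf{Z}^{(\lambda_n)}$ can be realized as the push-forward under $\iota_{t_{\lambda_n}}$ of a Poisson point process $\{a_i\}_{i\in\mathbb{N}}\subset G$ of intensity $\lambda_n\cdot\nu$, carrying the same iid Uniform$[0,1]$ marks $Y_i$. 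Because $Z_i^{(\lambda_n)}(v)=\dist(v,a_i)-t_{\lambda_n}$ differs from $\dist(v,a_i)$ by a constant, the quantity $c_v=\min_i Z_i^{(\lambda_n)}(v)$ is attained at exactly the same indices that minimize $\dist(v,a_i)$. Hence $\mathrm{Vor}(\mathbf{Z}^{(\lambda_n)})$ and its bond encoding coincide with the Voronoi diagram on $G$ induced by the ordinary graph metric applied to the point process $\{a_i\}$.

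The next step is to project $\{a_i\}$ onto its support $S\subset V(G)$. By standard Poisson thinning, each vertex independently carries at least one point with probability $1-e^{-\lambda_n}$, so $S$ has the law of $(1-e^{-\lambda_n})$-Bernoulli site percolation on $G$. If several indices $i$ satisfy $a_i=v$, they induce the same function $Z_i^{(\lambda_n)}\in\mathbf{D}$, so the label-based tie-breaking rule retains only the index with the smallest $Y_i$, and the remaining indices contribute empty cells, hence nothing to the bond encoding. This reduces the Poisson diagram to the Voronoi diagram on $G$ determined by $S$ together with the ``effective label'' $\tilde Y_v\coloneqq\min\{Y_i:a_i=v\}$ at each $v\in S$. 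Conditional on $S$, the effective labels $(\tilde Y_v)_{v\in S}$ are iid with a continuous distribution.

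To conclude, note that the bond encoding $P(\cdot)\in\{0,1\}^{E(G)}$ depends on the labels at $S$ only through their relative ordering across $S$: the label values are used solely to break distance ties, and only the ordering matters. In the Bernoulli--Voronoi model the marks at active vertices are iid Uniform$[0,1]$, while in the Poisson model the effective marks are iid continuous; in both cases the induced order on $S$ is a uniformly random permutation of $S$. Consequently the two bond encodings have the same joint distribution with the support $S$, which gives $\mathcal{P}^{\mu_{\lambda_n}}=\mathcal{P}^{(1-e^{-\lambda_n})}$. The ``in particular'' assertion follows by composing this identity with the hypothesis $\mathcal{P}^{(1-e^{-\lambda_n})}\xrightarrow{v}\mathcal{P}$ of Theorem~\ref{thm:Diagrams}. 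The only real subtlety is the handling of the multiplicities of the Poisson process at a single vertex, which is resolved cleanly by the effective-label reduction.
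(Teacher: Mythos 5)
Your proposal is correct and follows essentially the same route as the paper's own proof: push the Poisson process from $\mathbf D$ back to $G$ via $\iota_{t_{\lambda_n}}$, collapse multiple Poisson points at each vertex to the one with smallest mark to obtain a $(1-e^{-\lambda_n})$-Bernoulli process with iid continuous effective marks, and observe that the bond encoding depends on the marks only through their ordering. The only difference is presentational — you spell out the thinning and the tie-breaking reduction a bit more explicitly than the paper does.
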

    \begin{proof}
        Observe that by the definition of $\iota_{t_{\lambda_n}}$, $\mathcal{P}^{\mu_{\lambda_n}}$ can be equivalently defined by taking the Voronoi diagram of the marked Poisson point process (with iid ${\rm Unif}[0,1]$-labels) ${\bf Z}^n=\{(Z^n_1,Y^n_1),(Z^n_2,Y^n_2),\dots\}$ on $G$ with intensity $\lambda_n\cdot \nu$, where $\nu$ is counting measure on $G$.
        
        Note that in the Voronoi diagram of ${\bf Z}^{n}$, if $Z^{n}_i=Z^{n}_j$, then the corresponding Voronoi cell of the point with the larger mark is empty.
        Let ${\bf X}^{n}$ be the marked point process obtained by refining the multiple points of ${\bf Z}^{n}$ by keeping only the point with the smallest mark.
        It is easy to see that ${\bf X}^{n}$ is a marked Bernoulli process with intensity $1-e^{-\lambda_n}$ and marks which are iid with a continuous distribution on $[0,1]$.
        This implies that $\mathcal P^{\mu_{\lambda_n}}=\mathcal P^{(1-e^{-\lambda_n})}$, and the assumption gives $\mathcal P^{\mu_{\lambda_n}}\to\mathcal P$.      
    \end{proof}

    Let $\epsilon,R>0$ and $\mathcal{E}$ be a percolation configuration in $G\upharpoonright B_R(o)$.
    By Claim~\ref{cl:LastClaim}, we are left with showing that
    $$\lim_{n\to\infty}\mathcal{P}^{\mu_{\lambda_n}}(\widehat{\mathcal{E}})\ge \max\{0, \mathcal{P}^{\mu}(\widehat{\mathcal{E}})-\epsilon\}.$$
    For $m,k\in \mathbb{Z}$ and $\ell\in \mathbb{N}$, consider a sequence $(\sigma,k,F_1,\dots,F_\ell)$, where
    \begin{enumerate}
        \item $\sigma$ is a permutation of $\{1,\ldots,\ell\}$,
        \item $F_j:B_R(o)\to \mathbb{Z}$ for every $1\le j\le \ell$ with the property that $m=F_1(o)\le F_2(o)\le \dots \le F_\ell(o)$ and $F_{\ell}(o)\le m+2R$,
        \item $k> m+2R$,
        \item $(\sigma,F_1,\dots,F_\ell)$ determines $\mathcal{E}$.
    \end{enumerate}
    Define $\mathcal{A}(\sigma,k,F_1,\dots,F_\ell)$ to be the event that $X_i\upharpoonright B_R(o)=F_i$ for $1\le i\le \ell$, $X_{\ell+1}(o)=k$ and the ordering of $(Y_i)_{i=1}^\ell$ induces the permutation $\sigma$.
    Similarly, define $\mathcal{A}^n(\sigma,k,F_1,\dots,F_\ell)$ for the marked Poisson point process with intensity $\mu_{\lambda_n}$.
    By $\mu_{\lambda_n}\xrightarrow{v} \mu$ we have that
    $$\mathbb{P}(\mathcal{A}^n(\sigma,k,F_1,\dots,F_\ell))\to \mathbb{P}(\mathcal{A}(\sigma,k,F_1,\dots,F_\ell))$$
    as agreeing with $(F_1,\dots, F_\ell)$ and $k$ is a clopen condition and the marks are independent.
    
    Define $\mathcal{A}(m,k,\ell)$ to be the union of $\mathcal{A}(\sigma,k,F_1,\dots,F_\ell)$ over sequences $(\sigma,k,F_1,\dots,F_\ell)$ that satisfies (1.)--(3.) and similarly define $\mathcal{A}^n(m,k,\ell)$.
    As there are only finitely many such sequences $(\sigma,k,F_1,\dots,F_\ell)$, we have that
    $$\mathbb{P}(\mathcal{A}^n(m,k,\ell))\to \mathbb{P}(\mathcal{A}(m,k,\ell)).$$
    Observe that the events $\mathcal{A}^n(m,k,\ell)$ and $\mathcal{A}^n(m',k',\ell')$ are disjoint whenever $(m,k,\ell)\not=(m',k',\ell')$, and similarly for $\mathcal{A}(m,k,\ell)$.
    
    The proof is finished by noting that $P({\bf X})\upharpoonright B_R(o)=\mathcal{E}$ if and only if ${\bf X}\in \bigsqcup_{m,k,\ell} \mathcal{A}(m,k,\ell)$, hence there is $N>0$ such that
    $$\lim_{n\to\infty}\mathbb{P}\left(\bigsqcup_{-N\le m,k,\ell\le N}\mathcal{A}^n(m,k,\ell)\right)=\mathbb{P}\left(\bigsqcup_{-N\le m,k,\ell\le N}\mathcal{A}(m,k,\ell)\right)>\mathcal{P}^{\mu}(\widehat{\mathcal{E}})-\epsilon$$
    as desired.
\end{proof}

\end{document}